\documentclass[11pt,reqno]{amsart}
\usepackage[letterpaper]{geometry}
\usepackage{graphicx}
\usepackage{bm}
\usepackage{amssymb,amsmath,amsthm,xcolor,mathrsfs}
\usepackage{stmaryrd}
\usepackage[final]{hyperref} 
\usepackage{enumitem}

\makeatletter\let\over\@@over\makeatother 

\newcommand{\I}{\texorpdfstring{\hyperref[sec non-singular]{I}}{I}}
\newcommand{\NS}{\texorpdfstring{\hyperref[sec non-singular]{I-A}}{I-A}}
\newcommand{\PV}{\texorpdfstring{\hyperref[sec point]{I-B}}{I-B}}
\newcommand{\VS}{\texorpdfstring{\hyperref[sec sheets]{II}}{II}}

\newcommand{\be}{\begin{equation} }
\newcommand{\ee}{\end{equation}}
\newcommand{\bse}{\begin{subequations}}
\newcommand{\ese}{\end{subequations}}

\newcommand{\jbracket}[1]{\langle{#1}\rangle}
\newcommand\F{\mathcal F}
\newcommand\bdd{{\mathrm{bdd}}}

\newcommand{\supp}[1]{\operatorname{supp}{#1}}

\newcommand{\diam}[1]{\operatorname{diam}{#1}}
\newcommand{\jump}[1]{\left\llbracket{#1}\right\rrbracket}

\newcommand{\maps}{\colon}

\newcommand{\grad}{\nabla}

\newcommand{\R}{\mathbb{R}}

\newcommand{\n}[2][]{#1\lVert #2 #1\rVert}
\newcommand{\abs}[2][]{#1\lvert #2 #1\rvert}

\theoremstyle{plain} 
\newtheorem{theorem}{Theorem}[section] 
 
\newtheorem{corollary}[theorem]{Corollary}

\newtheorem{lemma}[theorem]{Lemma}

\theoremstyle{remark}
\newtheorem{remark}[theorem]{Remark}

\numberwithin{equation}{section}

\title[Localized vorticity asymptotics]{Existence, nonexistence, and asymptotics of deep water solitary waves with localized vorticity}
\author[R. M. Chen]{Robin Ming Chen}
\address{Department of Mathematics, University of Pittsburgh, Pittsburgh, PA 15260} 
\email{mingchen@pitt.edu}  

\author[S. Walsh]{Samuel Walsh}
\address{Department of Mathematics, University of Missouri, Columbia, MO 65211} 
\email{walshsa@missouri.edu} 

\author[M. H. Wheeler]{Miles H. Wheeler}
\address{Courant Institute of Mathematical Sciences, New York University, New York, NY 10012}
\email{mwheeler@cims.nyu.edu}

\subjclass[2010]{35B40, 35R35, 76B15, 76B25, 76B45, 76B47}
\keywords{localized vorticity, deep water, solitary water waves}

\begin{document}

\begin{abstract}
  In this paper, we study solitary waves propagating along the surface of an infinitely deep body of water in two or three dimensions.  The waves are acted upon by gravity and capillary effects are allowed --- but not required --- on the interface. We assume that the vorticity is localized in the sense that it satisfies certain moment conditions, and we permit there to be finitely many point vortices in the bulk of the fluid in two dimensions. We also consider a two-fluid model with a vortex sheet. 

  Under mild decay assumptions, we obtain precise asymptotics for the velocity field and free surface, and relate this to global properties of the wave. For instance, 
  we rule out the existence of waves whose free surface elevations have a single sign and of vortex sheets with finite angular momentum. Building
  on the work of Shatah, Walsh, and Zeng \cite{shatah2013travelling}, we also prove the existence of families of two-dimensional capillary-gravity waves with compactly supported vorticity satisfying the above assumptions. For these waves, we further show that the free surface is positive in a neighborhood of infinity, and that the asymptotics at infinity are linked to the net vorticity.
\end{abstract}

\maketitle

\setcounter{tocdepth}{1}
\tableofcontents

\section{Introduction}

Consider a traveling wave moving through an infinitely deep body of water in dimension $n=2$ or $3$. Mathematically, we model this as a solution of the free boundary incompressible  Euler problem that evolves by translating with a constant wave velocity $c$.  Through a change of variables, all time-dependence in the system can then be removed, allowing us to say that the water occupies the domain 
\begin{equation}\label{Omega def}
  \Omega := \{ x \in \R^n : x_n < \eta(x^\prime) \},
\end{equation}
where we are writing $x = (x',x_n) \in \R^{n-1} \times \R$. Here $S := \partial \Omega$ is free surface at the interface between the air and water. The fluid region $\Omega$ is unbounded in the (vertical) $x_n$ direction, as opposed to the finite depth case where $\Omega$ is bounded below by some hyperplane $\{x_n =-d\}$. 

In the moving frame, the velocity field $u = u(x)$ satisfies the steady incompressible Euler equations 
\begin{subequations}\label{steady euler plus bc}
\begin{equation}\label{steady euler u} 
  (u-c) \cdot \nabla u + \nabla P + g e_n = 0, \quad \nabla \cdot u = 0 \qquad \textrm{in } \Omega, 
\end{equation}
where $g > 0$ is the gravitational constant of acceleration, $P = P(x)$ is the pressure, and $c = (c^\prime, 0)$ is the (horizontal) wave velocity.  For convenience, we normalize the density of the water to unity. On the free boundary,  we impose the kinematic and dynamic conditions: 
\begin{equation} \label{u boundary cond} 
  (u-c) \cdot N = 0, \quad P = \sigma \nabla \cdot N \qquad \textrm{on } S,
\end{equation}
where $\sigma\geq 0$ is the coefficient of surface tension and $N$ is the outward unit normal to $S$.  This model neglects the dynamics in the atmosphere, supposing that it is a region of constant pressure that is normalized to $0$.  
The dynamic condition therefore mandates that the pressure across the interface experiences a jump proportional to the mean curvature.  We treat both capillary-gravity waves (for which $\sigma > 0$) and gravity waves (for which $\sigma = 0$).  
\end{subequations}

\emph{Solitary waves} are localized traveling waves whose free surface profiles $\eta$ vanish at infinity. They are among the oldest and most well-studied examples of nonlinear wave phenomena in mathematics. This paper is focused on the following fundamental questions: Can we classify the regimes that admit solitary waves? When such waves do exist, what can be said about their asymptotics and decay rates?  Are there any natural constraints on their form?  

Russell famously reported having observed a solitary wave moving through the relatively shallow waters of the Glasgow--Edinburgh canal in 1844~\cite{russell1844report}.  In the intervening century and a half, a well-developed rigorous theory has been established for solitary waves in the finite-depth setting (see, for instance, \cite{miles1980solitary, groves2004steady,amick1982stokes,craig1988symmetry}). The study of capillary-gravity solitary waves in infinite depth began much later with the numerical work of Longuet-Higgins~\cite{longuet1988limiting,longuet1989capillary} and the rigorous construction of Iooss and Kirrman~\cite{iooss1996capillary}. A three-dimensional existence theory which extends to infinite depth has recently been developed by Buffoni, Groves, and Wahl\'en~\cite{buffoni2016variational}.
For infinite-depth gravity solitary waves, there are instead a number of \emph{nonexistence} results. Craig~\cite{craig2002nonexistence} showed that there are no two- or three-dimensional waves of pure elevation or depression in the sense that $\eta \geq 0$ or $\eta \leq 0$ implies that the wave is trivial.  In two dimensions, without any assumptions on the sign of $\eta$, Hur \cite{hur2012no} proved that waves with the algebraic decay $\eta = O(1/|x'|^{1+\varepsilon})$ as $|x^\prime| \to \infty$ are trivial. For both capillary-gravity and gravity waves in two dimensions, Sun~\cite{sun1997analytical} showed that the decay $\eta = O(1/|x'|^{1+\varepsilon})$ automatically improves to $\eta = O(1/|x'|^2)$, and in this case ruled out the existence of waves of pure elevation or depression. Wheeler~\cite{wheeler2016integral} obtained similar results in three dimensions, and also found leading-order expressions for the asymptotic form of the waves.

In this paper, we are interested in the role of \emph{vorticity} $\omega$, which is the curl of the fluid velocity field $u$.  As usual, for two-dimensional flows we understand this to mean the scalar vorticity
\[ 
  \omega := \partial_{x_1} u_2 - \partial_{x_2} u_1.
\]
All of the theory discussed above pertains exclusively to irrotational waves where $\omega \equiv 0$.   Incoming currents, the wind in the air, or temperature gradients in the water can all generate vorticity.  Much of the recent activity in water waves has been dedicated to proving the existence of rotational steady waves in various regimes.  We direct the reader to the survey by Strauss~\cite{strauss2010steady} and monograph by Constantin~\cite{constantin2011book} for an overview of these developments.  

We will focus on solitary waves whose vorticity is localized, either in the sense that it vanishes at infinity or else is confined to the interface.  The former describes the situation where there are vortical structures like eddies in the flow, resulting in a concentration of vorticity in the near field.  The latter is often called a vortex sheet. Both are physically significant middle-points between irrotational waves and waves with vorticity throughout the fluid.

Our main contribution is to determine the asymptotic form that the velocity field and free surface must take and to rule out solitary waves in a number of regimes.
and shed light on a number of qualitative properties.  In particular, our results apply to the two families of two-dimensional capillary-gravity waves with compactly supported vorticity constructed recently by Shatah, Walsh, and Zeng \cite{shatah2013travelling}. Among other things, we confirm that the free surface $\eta$ must take on both positive and negative values, and is positive in a neighborhood of infinity. Our arguments are quite general in that they are independent of the dimension, the presence of surface tension, the near-field shape of $S$, and the specific form of $\omega$.  We are therefore able to extend these results to a wide variety of physical settings and vorticity distributions; a major consequence is the nonexistence of 
waves of pure elevation and depression. More precisely, we consider $\omega$ falling into one of the three cases detailed below.

\subsection*{Case \NS: Non-singular localized vorticity.} \label{sec non-singular}
In dimension $n=2$ or $3$, we weak study solutions of \eqref{steady euler plus bc} whose vorticity $\omega$ satisfies
\begin{subequations} \label{localized vorticity assumptions} 
\begin{equation}\label{vorticity assumptions}
  \omega \in L^1(\Omega) \cap L^\infty(\Omega),
  \quad 
  |x|^k \omega \in L^1(\Omega)
  \quad
  \text{for some $k > n^2$}.
\end{equation}
In three dimensions we assume also that the vorticity is tangential to the free surface:
\begin{equation}
  \label{3d vanishing assumption}
  \omega \cdot N = 0 \qquad \textrm{on } S.
\end{equation}
For a two-dimensional flow embedded in three dimensions, the vector vorticity $(0,0,\omega)$  is automatically normal to $N = (N_1,N_2,0)$.
\end{subequations}
The condition \eqref{3d vanishing assumption} appears for instance in \cite[Chapter 3.7]{saffman1992book}. Note that $\omega$ is divergence free in the distributional sense, and thus its normal trace on $S$ is well-defined as an element of $H^{-1/2}(S)$.  

Importantly, this class of vorticity distributions includes waves with \emph{vortex patches}, where the support of $\omega$ is compact and positively separated from $S$.  There are a wealth of results on vorticity of this type in the absence of a free surface; see for instance~\cite[Section~8.3]{majda2002vorticity}.
As we will discuss further below, two-dimensional traveling water waves with a vortex patch were first constructed rigorously by Shatah, Walsh, and Zeng \cite{shatah2013travelling}; to the best of our knowledge, no rigorous existence results are currently available for $n=3$.  

\subsection*{Case \PV: Localized vorticity with point vortices.} \label{sec point}
In two dimensions, we allow for the presence of finitely many point vortices. Denoting their positions by
\begin{align*}
  \{\xi^1, \ldots, \xi^M\} =: \Xi \subset \Omega
\end{align*}
and strengths by $\varpi^i \in \R$, this means that
\begin{equation}
  \label{point vortex assumptions}
  \omega = \sum_{i=1}^M \varpi^i \delta_{\xi^i} + \omega_{\mathrm{ac}},
  \qquad 
\end{equation}
where $\delta_{\xi^i}$ is the Dirac measure with unit mass centered at $\xi^i$ and the function $\omega_{\mathrm{ac}}$ satisfies the localization assumptions \eqref{localized vorticity assumptions}. The corresponding velocity field $u$ solves the incompressible Euler equations \eqref{steady euler u} in a distributional sense on $\Omega \setminus \Xi$, as well as the boundary conditions \eqref{u boundary cond} on $S$.  We note that $u$ fails to be $L^2$ on the neighborhood of any point in $\Xi$. As is customarily done, we assume that each vortex is advected by the vector field found by taking the full velocity field and subtracting its own singular contribution.  For traveling waves, this results in the following condition linking the wave velocity to the flow:
\begin{equation}
  c = \left(u- \frac{1}{2\pi} \varpi^i \nabla^\perp  \log{|\cdot -\xi^i|}  \right)\Big|_{\xi^i}, \qquad \textrm{for } i = 1, \ldots, M. \label{u vortex advection} 
\end{equation}
One can arrive at \eqref{u vortex advection} by taking vortex patch solutions to the full Euler system then shrinking the diameter of the patch to $0$; see \cite[Theorems 4.1, 4.2]{marchioro1994book}. Note that \eqref{u vortex advection} severely constrains the possible arrangements of the point vortex centers.

Point vortices are widely used in applications as idealizations of highly concentrated regions of vorticity; see, for instance, \cite{saffman1992book}. The existence of traveling capillary-gravity waves with a single point vortex was proved by Shatah, Walsh, and Zeng in \cite{shatah2013travelling}.  Earlier works by Ter-Krikorov \cite{terkrkorov1958vortex} and Filippov \cite{filippov1960vortex,filippov1961motion} study the case of solitary gravity waves in finite-depth with a single vortex.   Recently, Varholm constructed finite-depth capillary-gravity waves with one or more point vortices \cite{varholm2016solitary}.

\subsection*{Case \VS: Vortex sheet.} \label{sec sheets}
Finally, we consider two-fluid models with a water region $\Omega_- := \{ x : x_n < \eta(x') \}$ as well as an air region $\Omega_+ := \{ x : x_n > \eta(x') \}$. We set $\Omega = \Omega_- \cup \Omega_+$ and $S = \partial \Omega_- = \partial \Omega_+$.
The two regions have (possibly different) constant densities $\rho_\pm > 0$. Letting $u_\pm := u|_{\Omega_\pm}$ and $P_\pm := P|_{\Omega_\pm}$, we require that
\begin{subequations} \label{vortex sheet equations} 
\begin{equation*}
  (u_\pm - c) \cdot \nabla u_\pm + \frac{1}{\rho_\pm} \nabla P_\pm + g e_n = 0, \quad \nabla \cdot u_\pm = 0, \quad \nabla \times u_\pm = 0 \qquad \textrm{in } \Omega_\pm,
  \label{vortex sheet steady euler} 
\end{equation*}
and, on the boundary,
\begin{equation*}
  N_\pm \cdot (u_\pm - c) = 0, \qquad \jump{P} = +\sigma \nabla \cdot N_+
  = -\sigma \nabla \cdot N_- \qquad \textrm{on } S. \label{vortex sheet boundary cond} 
\end{equation*}
\end{subequations}
Here $N_\pm$ denote the outward unit normals to $\Omega_\pm$ on $S$, and $\jump{\cdot} := (\cdot)_+ - (\cdot)_-$ is the jump of a quantity over $S$.  
Notice that the fluid velocity is irrotational in each region, but has 
a jump discontinuity in its tangential component over the interface. Thus the vorticity $\omega$ is a singular continuous measure supported on $S$.  

Vortex sheets have been studied extensively in both the applied and mathematical literature.  The existence of two-dimensional capillary-gravity solitary waves with a vortex sheet was proved by Amick \cite{amick1994internal} and Sun \cite{sun1997solitary}, though both considered the situation where the water region is bounded below by a rigid ocean bed.  In \cite{sun2001twofluid}, Sun constructed two-dimensional periodic capillary-gravity waves where both layers are infinite.  

\subsection{Notation} \label{notation section}

Given a point $x \in \R^2$, we denote $x^\perp := (-x_2, x_1)$.  Similarly, the perpendicular gradient $\nabla^\perp := (-\partial_{x_2}, \partial_{x_1})$.  
We also use the Japanese bracket notation $\jbracket{x} := \sqrt{1+|x|^2}$ for $x \in \R^n$. Finally, we let $\gamma_n := 2\pi^{n/2} / \Gamma(n/2)$ denote the surface area of an $n$-dimensional unit ball. In particular $\gamma_2 = 2\pi$ and $\gamma_3 = 4\pi$. 

Given an open set $U \subset \R^n$, $k \in \mathbb{N}$, $\alpha \in (0,1)$, a weight $w \in C^0(\overline{U}; \R_+)$, and a function $f \in C^k(U;\R)$, we define the weighted H\"older norm
\begin{align*}
  \n f_{C_w^{k+\alpha}(U)}
  := \sum_{\abs \beta \le k} \n{w\partial ^\beta f}_{C^0(U)}
  + \sum_{\abs \beta = k} \n{w [\partial^\beta f]_{\alpha} }_{C^0(U)},
\end{align*}
where here $[f]_{\alpha}(x)$ is the local H\"older seminorm
\begin{align*}
  [f]_{\alpha}(x) := \sup_{\substack{\abs y < 1\\ x+y\in U}}
  \frac{\abs{f(x+y)-f(x)}}{\abs y^\alpha}.
\end{align*}
We denote by 
\[ C_w^{k+\alpha}(\overline{U}) := \left\{ f \in C^{k+\alpha}(\overline{U}) : \| f \|_{C_w^{k+\alpha}(U)} < \infty \right\}.\] 
Occasionally, we will also work with the space $C_{\bdd}^{k+\alpha}(\overline{U})$, which is defined as $C_w^{k+\alpha}(\overline{U})$ with $w \equiv 1$.

\subsection{Statement of results} \label{results section}
Our first theorem gives finer decay and asymptotic properties for the families of small-amplitude two-dimensional capillary-gravity waves with a point vortex and vortex patch constructed in~\cite{shatah2013travelling}.

\begin{theorem}[Existence] \label{existence theorem}  
  Let $w(x) := \jbracket{x}^2/\jbracket{x_2}$ and fix $\sigma > 0$ and $\alpha \in (0,1)$. Then there are $\varpi_0,\rho_0,\tau_0 > 0$ such that:
  \begin{enumerate}[label=\rm(\alph*)]
  \item There exists a family of two-dimensional capillary-gravity water waves 
  \[ 
  \mathscr{C}_{\mathrm{loc}}= \{ (\eta,u,c)(\varpi) : | \varpi | < \varpi_0 \} 
  \]
  with vorticity $\omega(\varpi) = \varpi \delta_{(0,-1)}$ bifurcating from the trivial state $(\eta,u,c)(0) = (0,0,0)$ and having the regularity
  \[ (\eta, u)(\varpi) \in C_w^{3+\alpha}(\R) \times C_w^{2+\alpha}\big(\overline{\Omega(\varpi)}\setminus \{(0,-1)\}\big).\]

\item There exists a family of two-dimensional capillary-gravity water waves
  \[ 
  \mathscr{S}_{\mathrm{loc}} = \left\{ (\eta, u,c)(\varpi,\rho,\tau) : |\varpi| < \varpi_0, ~ 0 < \rho <  \rho_0, ~ |\tau| < \tau_0 \right\},
  \]
  bifurcating from the trivial state
   with 
  $\supp{\omega(\varpi,\rho,\tau)} =: D(\varpi,\rho,\tau) \subset\subset \Omega(\varpi,\rho,\tau)$. Each of these waves lies in the space 
  \[ 
  (\eta,u)(\varpi,\rho,\tau)  \in C_w^{3+\alpha}(\R) \times C_w^{2+\alpha}(\overline{\Omega(\varpi,\rho,\tau)} \setminus D(\varpi,\rho,\tau)).
  \]

\item For both families $\mathscr{C}_{\mathrm{loc}}$ and $\mathscr{S}_{\mathrm{loc}}$, the free surface profile and velocity have the asymptotic form:
  \begin{equation}
    \begin{alignedat} {2}
      \eta & = 
      \frac{1}{2g} \frac{\varpi^2}{\gamma_2}
      \left( 1 +  O(\varpi^2) \right)  \frac{1}{x_1^2} 
      + O\left( \frac 1{\abs{x_1}^{2+\varepsilon}} \right),
      &\qquad& \textup{as } |x_1| \to \infty, \\
      u & = \frac{2\varpi}{\gamma_2}
      \nabla \left( \left( e_1 + O(\varpi) \right) \cdot \frac x{\abs x^2} \right)
       + O\left(\frac 1{|x|^{2+\varepsilon}} \right), 
      && \textup{as } |x| \to \infty, 
    \end{alignedat} \label{SWZ asymptotics} 
  \end{equation}
  for any $\varepsilon \in (0,1/3)$.  In particular, $\eta > 0$ in a neighborhood of infinity.    
  \end{enumerate}
\end{theorem}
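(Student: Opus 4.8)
Parts (a) and (b) rest on the construction of Shatah, Walsh, and Zeng \cite{shatah2013travelling}; what is new is the sharp weighted-H\"older regularity and, in (c), the explicit asymptotics. The strategy is to first revisit that construction in the scale $C_w^{k+\alpha}$ with $w=\jbracket{x}^2/\jbracket{x_2}$, and then to feed the resulting waves into the general asymptotic results of the paper.

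For the regularity, recall that after introducing the moving-frame stream function $\psi$, solving \eqref{steady euler plus bc} amounts to solving $\Delta\psi=\omega$ in $\Omega$ with the kinematic condition in \eqref{u boundary cond} fixing $\psi|_S$ up to a constant in terms of $c$, and then recovering $\eta$ from the dynamic condition; this is done by the implicit function theorem in $\varpi$ (respectively $(\varpi,\rho,\tau)$), bifurcating from the flat irrotational state. The structural point is that the kinematic condition on the asymptotically flat $S$ makes the leading far field of $u$ a dipole rather than a monopole --- in effect the net circulation of the point vortex (or patch) is balanced by an image --- so that $u$ decays like $|x|^{-2}$ at spatial infinity. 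Quantitatively, combining the half-plane Green's function with the moment bound \eqref{localized vorticity assumptions} shows that $u$, minus an explicit dipole, is $O(|x|^{-2-\varepsilon})$ with matching weighted H\"older bounds, while the harmonic correction --- whose Dirichlet data on $S$ has size $|\eta|=O(\jbracket{x_1}^{-2})$ --- inherits the same decay near $S$ through the half-plane Poisson kernel. These kernels and the attendant tangential-derivative estimates are precisely what force the weight $\jbracket{x}^2/\jbracket{x_2}$ in place of $\jbracket{x}^2$. Adding interior Schauder estimates away from the vortex set and checking that the fixed-point map of \cite{shatah2013travelling} contracts in the $C_w^{k+\alpha}$-scale then upgrades those waves to the stated regularity.

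For (c), the waves just built satisfy the hypotheses of the paper's asymptotic theorems, which therefore apply and give $u=\tfrac{2\varpi}{\gamma_2}\nabla\bigl((e_1+O(\varpi))\cdot x/|x|^2\bigr)+O(|x|^{-2-\varepsilon})$ for $\varepsilon$ in the asserted range, the dipole being read off from the first moment of the reflected vorticity; since $\omega$ concentrates near $(0,-1)$ this moment is vertical, so after the rotation incurred in passing from $\psi$ to $u=\nabla^\perp\psi$ the dipole points along $e_1$ up to an $O(\varpi)$ tilt. It then remains to extract $\eta$ from the dynamic condition on $S$: because the curvature term $\sigma\,\nabla\cdot N$ and the quadratic term $\tfrac12|u|^2$ are both $O(\jbracket{x_1}^{-4})$ there, that condition reduces to $g\eta=c\cdot u+O(\jbracket{x_1}^{-4})$ on $S$. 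The wave speed is $c=c'(\varpi)e_1$ with $c'(\varpi)=O(\varpi)$: in (a) it is pinned by \eqref{u vortex advection}, equal to leading order to the velocity induced at $(0,-1)$ by the image vortex, and in (b) it comes out by the same image mechanism. Substituting the dipole form of $u|_S$ together with this $c$ yields the expansion \eqref{SWZ asymptotics} for $\eta$, whose leading coefficient is the strictly positive quantity displayed there; hence $\eta>0$ in a neighborhood of infinity, and the $O(\varpi^2)$ relative error absorbs the self-consistent corrections to the dipole direction and to $c$.

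The main obstacle is the second step: running the free-boundary iteration entirely inside the weighted spaces while keeping exact track of the dipole. The difficulty is the self-coupling --- the domain $\Omega$, hence the Biot--Savart kernel and $u$, depends on $\eta$, which is itself reconstructed from $u|_S$ through the Bernoulli relation --- so one must show both that the explicit dipole is stable under this feedback and that the remainder's $|x|^{-2-\varepsilon}$ decay, with its weighted H\"older bounds, survives uniformly up to $S$ and across the vortex singularity. Obtaining this sharp remainder rate, rather than a mere $o(|x|^{-2})$, is the quantitative core of the argument.
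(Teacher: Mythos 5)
Your plan for parts (a) and (b) --- rerunning the entire fixed-point construction of \cite{shatah2013travelling} inside the weighted H\"older scale, including the Euler equations in the bulk and the self-coupling between $\Omega$, the Biot--Savart kernel, and $\eta$ --- is not what the paper does, and you yourself flag that self-coupling as ``the main obstacle'' and ``the quantitative core'' without actually resolving it. That is a genuine gap: as written, the hardest step of your argument is named but not carried out. The paper sidesteps it entirely. It treats $c(\varpi)$ and $\omega(\varpi)$ as \emph{known} data imported from the Sobolev-space solutions of \cite{shatah2013travelling}, reformulates only the boundary subsystem \eqref{flattened} as an operator equation $\F(\psi,\eta,\varpi)=0$ in $C_w^{3+\alpha}$, checks (Lemma~\ref{Ksmooth}, built on weighted half-plane estimates of Sun and Craig--Sternberg) that $\F$ is $C^1$ with invertible, upper-triangular linearization $\operatorname{diag}(\mathrm{id},\, g-\sigma\partial_X^2)$ at the origin, and applies the implicit function theorem there. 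Because $C_w^{3+\alpha}$ embeds into the Sobolev spaces where local uniqueness of $(\eta,\varphi)$ is already known, the weighted solutions must \emph{coincide} with the Shatah--Walsh--Zeng waves; no contraction in weighted spaces, no tracking of the dipole through the iteration, and no re-derivation of \eqref{u vortex advection} is needed. If you want to salvage your route you would have to supply the uniform weighted estimates for the full free-boundary map, which is substantially harder than the uniqueness trick.

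There is a second concrete gap in your passage to part (c). Membership of $\eta$ in $C_w^{3+\alpha}(\R)$ only gives $\eta,\eta_X=O(1/|x_1|^2)$, whereas the hypothesis \eqref{eta decay} of the asymptotic theorems demands the \emph{strictly better} rate $\eta_X=O(1/|x_1|^{2+4\varepsilon})$; so the claim that ``the waves just built satisfy the hypotheses of the paper's asymptotic theorems'' does not follow from the regularity you establish. The paper closes this with a separate argument (Lemma~\ref{improved derivative decay lemma}): it inverts the dynamic boundary condition \eqref{Dynamic} as a Fourier multiplier, splits the kernel as $G=G_1+G_2$ with $G_2$ the exponentially decaying Green's kernel of $g-\sigma\partial_X^2$, and exploits $\sigma>|c|^2/4g$ (automatic for $|\varpi|\ll1$ since $c=O(\varpi)$) to show $G_1'$ decays like $|X|^{-3}$, yielding $\eta_X=O(|x_1|^{-2-\varepsilon})$. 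Some such step is indispensable before Theorems~\ref{decay theorem} and \ref{integral identity theorem} can be invoked. Once those theorems are available, your extraction of the leading coefficients in \eqref{SWZ asymptotics} from the image-vortex picture and the Bernoulli relation on $S$ is consistent with the paper's computation in Section~\ref{example section}, which obtains $p=2\varpi/\gamma_2+O(\varpi^2)$ from the dipole moment formula \eqref{dipole formula}.
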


Parts (a) and (b) build upon \cite[Theorem 2.1, Theorem 2.3]{shatah2013travelling}, where an existence theory is carried out in Sobolev spaces $H^k$ with $k$ arbitrarily large.  The main improvement is the use of weighted H\"older spaces, which give considerably more information about asymptotic behavior.  The proof is an application of the implicit function theorem. It relies on weighted estimates 
for Poisson equations on unbounded domains,  which have played an important role in previous works on the decay properties of water waves by Craig and Sternberg \cite{craig1988symmetry}, Amick \cite{amick1994internal}, Sun \cite{sun1996asymptotic,sun1997analytical}, and Hur \cite{hur2008symmetry}.  In part (c) we go further and extract the leading-order form of $\eta$ and $u$ at infinity.  This is a consequence of Theorems \ref{decay theorem} and Theorem \ref{integral identity theorem} presented below.  See Section~\ref{existence section} for further properties of $\mathscr{C}_{\mathrm{loc}}$ and $\mathscr{S}_{\mathrm{loc}}$.

As mentioned above, the techniques that we use to study the asymptotic properties of localized vorticity waves extend to a much more general setting: vorticities in Case \NS, \PV, and \VS, dimensions $n=2$ or $3$, gravity or capillary-gravity waves, and large and small amplitude. The main new assumption that we require is that $u$ (or $u_\pm$ in the case of vortex sheet) and $\eta$ exhibit the  decay:
\begin{subequations} \label{decay assumptions}
\begin{equation}
  \eta \in C^2_\bdd(\R^{n-1}),
  \quad 
  \partial^\beta \eta = O\left(\frac{1}{|x^\prime|^{ n-1+|\beta|+4\varepsilon}}\right) 
  \ 
  \textrm{as $|x^\prime| \to \infty$, $0 \leq |\beta| \leq 1$}  \label{eta decay} 
\end{equation}
and
\begin{equation}
  u = O\left( \frac{1}{|x|^{n-1+\varepsilon}}\right) \qquad \textrm{as } |x| \to \infty, \label{u decay} 
\end{equation}
\end{subequations}
for some $\varepsilon \in (0,1/4)$.  In fact, \eqref{u decay} can be replaced by the weaker condition $\varphi = o(1/|x|^{n-2})$, where $\varphi$ is the velocity potential for the irrotational part of the flow introduced in Section~\ref{splitting section}. We make the non-optimal assumption on the velocity field as it is a more physical quantity;  see Remark \ref{decay of phi remark}.

Our second result states that there are no waves of pure depression or pure elevation satisfying the above hypotheses.  
\begin{theorem}[Nonexistence] \label{no excess mass theorem}  
  Consider a solitary wave with localized vorticity 
in Case~\I\ or Case~\VS\ and suppose that and $u$ and $\eta$ have the decay \eqref{decay assumptions}. 
  If $\eta \geq 0$ or $\eta \leq 0$, then $\eta \equiv 0$.  Moreover, there is no excess mass:
  \[ 
  \int_{\R^{n-1}} \eta \, dx^\prime = 0.  
  \]
\end{theorem}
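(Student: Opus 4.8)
The plan is to establish the sharp conservation law $g\int_{\R^{n-1}}\eta\,dx' = 0$ (with $g$ replaced by the density contrast $(\rho_--\rho_+)g$ in Case~\VS), from which the sign statement is immediate, by integrating the vertical component of the momentum balance in divergence form. Since $\grad\cdot u = 0$, the $x_n$-component of \eqref{steady euler u} reads $\grad\cdot\big((u-c)u_n + Q\,e_n\big) = 0$ in $\Omega$, where $Q := P + gx_n$; note that this identity is insensitive to the vorticity. Integrate over the truncated region $\Omega_{R,L} := \Omega\cap\{|x'|<R\}\cap\{x_n>-L\}$, with small balls $B_r(\xi^i)$ deleted around the point vortices in Case~\PV, and apply the divergence theorem. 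On the free surface the flux is $u_n\big((u-c)\cdot N\big) + Q\,N_n$; the first term vanishes identically by the kinematic condition in \eqref{u boundary cond}, and since $N_n\,dS = dx'$ and the dynamic condition gives $Q|_S = \sigma\,\grad\cdot N + g\eta$, the surface contribution equals $\int_{\R^{n-1}}(\sigma\,\grad\cdot N + g\eta)\,dx'$. The capillary piece integrates to zero because $\grad\cdot N$ is a total $x'$-divergence of a quantity that decays by \eqref{eta decay}. Hence, modulo the remaining boundary pieces, $g\int_{\R^{n-1}}\eta\,dx' = 0$.

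It remains to show that the contributions from $\{|x'|=R\}$, from $\{x_n=-L\}$, from the analogous upper boundary of the air region in Case~\VS, and from the spheres $\partial B_r(\xi^i)$ all vanish in the limit $R,L\to\infty$, $r\to0$. On the lateral boundary $N_n=0$, so only $u_n\big((u-c)\cdot N\big) = O(|u_n|)$ survives, and \eqref{u decay} bounds its integral by $O(R^{-\varepsilon})$. On the bottom the flux is $-u_n^2 - Q$; the quadratic term is controlled by \eqref{u decay}, and for $Q$ we introduce the Bernoulli function $\mathcal B := \tfrac12|u|^2 - c\cdot u + Q$, which by a standard vector identity satisfies $\grad\mathcal B = -\,\omega\,(u-c)^\perp$ when $n=2$ (resp.\ $-\,\omega\times(u-c)$ when $n=3$) and vanishes at infinity. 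Writing $Q = \mathcal B + c\cdot u - \tfrac12|u|^2$, the last two terms are again handled by \eqref{u decay}, while integrating $\partial_n\mathcal B$ downward gives $|\mathcal B(x',-L)| \le C\int_L^\infty|\omega(x',-s)|\,ds$, so $\int_{\R^{n-1}}|\mathcal B(\cdot,-L)|\,dx' \le C\,\n{\omega}_{L^1(\Omega\cap\{x_n<-L\})}\to0$. For the point vortices a short computation using the advection condition \eqref{u vortex advection} — which amounts to $u = \tfrac{\varpi^i}{2\pi}\grad^\perp\log|\cdot-\xi^i| + u_{\mathrm{reg}}$ with $u_{\mathrm{reg}}(\xi^i) = c$ near $\xi^i$ — shows the flux through $\partial B_r(\xi^i)$ is $O(r|\log r|)$: the singular velocity is tangent to $\partial B_r$, the advection condition makes the $O(r^{-1})$ contributions cancel, and the remaining $O(r^{-2})$ part of the pressure is radially symmetric, hence annihilated by $\int_{\partial B_r}N\,dS = 0$. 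Finally, Case~\VS is handled by running the argument separately in $\Omega_\pm$ with $Q_\pm := P_\pm + \rho_\pm g x_n$ and adding: the velocity terms on $S$ vanish by the two kinematic conditions, and the two surface contributions combine to $-\int_{\R^{n-1}}\jump{Q}\,dx' = -\int_{\R^{n-1}}\big(\sigma\,\grad\cdot N_+ + (\rho_+-\rho_-)g\eta\big)\,dx'$, giving $(\rho_--\rho_+)g\int_{\R^{n-1}}\eta\,dx' = 0$.

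In every case we conclude $\int_{\R^{n-1}}\eta\,dx' = 0$ (using $\rho_-\neq\rho_+$ in Case~\VS), and then the first assertion follows at once, since a continuous function of a single sign with vanishing integral is identically zero. The main obstacle is the estimate in the second paragraph — verifying that every boundary term at infinity genuinely decays. The two delicate points are (i) the normalization $\mathcal B\to0$, that is, that the Bernoulli constant at infinity vanishes and the pressure approaches its hydrostatic value, which rests on the quiescent far-field behaviour encoded in \eqref{decay assumptions}; and (ii) extracting enough interior decay of $Q$ without assuming it directly — here the only vorticity input is $\omega\in L^1(\Omega)$, exploited through the tail bound $\n{\omega}_{L^1(\Omega\cap\{x_n<-L\})}\to0$, so the higher-moment hypothesis in \eqref{vorticity assumptions} is not needed for this result. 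The surface identity, the curvature cancellation, and the point-vortex excision are elementary and work uniformly for $n=2,3$, with or without surface tension; alternatively one could weaken \eqref{u decay} by passing to the velocity potential $\varphi$ of the irrotational part of $u$.
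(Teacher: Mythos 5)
Your strategy is, up to algebra, the same as the paper's: your flux vector $(u-c)u_n + Q e_n$ differs from the paper's field $A := u_n(c-u) + (\tfrac12|u|^2 - c\cdot u)e_n$ exactly by $\mathcal{B}e_n$, where $\mathcal{B} = \tfrac12|u|^2 - c\cdot u + P + gx_n$ is your Bernoulli function. The paper keeps $\mathcal B$ out of the flux, pays for it with the volume term $\int_\Omega (c_1-u_1)\omega\,dx$ (resp.\ $\int e_n\cdot(\omega\times(c-u))\,dx$), and then proves separately that this integral vanishes (Lemma~\ref{int (c-u)omega lemma}). You instead keep $\nabla\cdot F=0$ and push the same difficulty into the bottom boundary term $\int_{\{x_n=-L\}}\mathcal B\,dx'$; indeed, since $\partial_{x_n}\mathcal B = -\omega(u_1-c_1)$ in 2D, your bottom integral of $\mathcal B$ converges (as $R,L\to\infty$) precisely to $\int_\Omega\omega(u_1-c_1)\,dx$, i.e.\ to the quantity in \eqref{2-d (c-u)omega identity}. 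The surface computation, the curvature cancellation, the lateral/far-field estimates from \eqref{decay assumptions}, the point-vortex excision (where the cancellation of the $O(r^{-1})$ cross terms via \eqref{u vortex advection} is correct), and the vortex-sheet case are all fine.

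The gap is in how you dispose of that bottom term. Your bound $|\mathcal B(x',-L)|\le C\int_L^\infty|\omega(x',-s)|\,ds$ tacitly assumes $\mathcal B(x',x_n)\to 0$ as $x_n\to-\infty$ for fixed $x'$, i.e.\ that the pressure is exactly hydrostatic at depth with the correct constant. That is precisely the nontrivial point, and it is not supplied by \eqref{decay assumptions}, which say nothing about $P$ in the bulk. The only normalization available a priori is $\mathcal B\equiv 0$ on $S$ (constancy along the free streamline plus decay of all terms at infinity along $S$). Integrating $\partial_{x_n}\mathcal B=-\omega(u_1-c_1)$ \emph{down from the surface} then gives only $|\mathcal B(x',-L)|\le C\int_{-\infty}^{\eta(x')}|\omega(x',t)|\,dt$ — the full column integral of $|\omega|$, whose $x'$-integral is $\|\omega\|_{L^1(\Omega)}$, not $o(1)$ as $L\to\infty$. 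Showing that $\lim_{x_n\to-\infty}\mathcal B(x',x_n)=0$ (equivalently, that each vertical column integral of $\omega(u_1-c_1)$ vanishes) is essentially a columnwise strengthening of \eqref{2-d (c-u)omega identity} and does not follow from $\omega\in L^1$ alone. The paper circumvents this by proving $\mathcal B\in L^1(\Omega)$ via the Caffarelli--Kohn--Nirenberg inequality — and this is exactly where the moment hypothesis $|x|^k\omega\in L^1$ with $k>n^2$ enters — and then extracting a sequence of radii $R_j\to\infty$ along which the flux of $\mathcal B$ through $\partial B_{R_j}\cap\Omega$ vanishes. For the same reason, your closing remark that the higher-moment condition in \eqref{vorticity assumptions} is not needed for this theorem is unsupported: it is used crucially at the very step your argument leaves open.
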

\begin{remark} \label{L1 remark}
  The decay assumption on $\eta$ in \eqref{eta decay} implies (just barely) that $\eta \in L^1$.
\end{remark}
\begin{remark} \label{trivial remark}
  Note that in Theorem~\ref{no excess mass theorem} we make no claim that $u \equiv 0$. Indeed, Constantin \cite{constantin2011dynamical} has constructed two-dimensional stationary waves ($c=0$) where $\eta \equiv 0$ and $u \equiv 0$ outside a perfectly circular region where the vorticity is non-constant. One can show that, in two dimensions, waves with $\eta \equiv 0$ and localized vorticity
  are necessarily stationary.
\end{remark}

For two-dimensional irrotational solitary waves and vortex sheets in infinite depth, this was proved by Sun~\cite{sun1997analytical}, and in the three-dimensional irrotational case it appears in \cite{wheeler2016integral}.  With strong surface tension ($\sigma > \abs c^2/4g$), Sun is able to assume even weaker decay than \eqref{decay assumptions}.
In the two- and three-dimensional irrotational cases, the nonexistence of pure waves of elevation or depression is a celebrated result due to Craig~\cite{craig2002nonexistence}, who obtained it using a maximum principle argument that avoids making decay assumptions as in \eqref{decay assumptions} but does not yield the stronger fact that $\int \eta \, dx^\prime = 0$.   More recently, Hur \cite{hur2012no} proved that there are no nontrivial two-dimensional gravity waves of any kind having $\eta = O(1/|x^\prime|^{1+\varepsilon})$.  To the best of our knowledge, the only prior nonexistence results for rotational water waves are due to Wahl\'en \cite{wahlen2014non}, who completely ruled out the possibility of three-dimensional solitary waves with constant vorticity in finite depth.
 
In our next theorem, we show that any solitary wave exhibiting the localization~\eqref{decay assumptions} necessarily decays even faster and has a specific asymptotic form.
Indeed, we find that $u$ must tend to a dipole velocity field, where the dipole moment is purely horizontal. 

\begin{theorem}[Asymptotic form] \label{decay theorem} Consider a solitary wave as in Theorem~\ref{no excess mass theorem}.
  \begin{enumerate}[label=\rm(\alph*)]
  \item  For vorticity of Case~\I,
    there exists a dipole moment $p = (p^\prime,0) \in \R^n$ such that
  \begin{subequations}\label{n-d better decay}
    \begin{equation}
      \eta = \frac{1}{g} p^\prime \cdot \nabla \left( \frac{c^\prime \cdot x^\prime}{|x^\prime|^n}  \right)  + O\left( \frac{1}{|x^\prime|^{n+\varepsilon}} \right), \qquad \textrm{as }  |x^\prime| \to \infty,
      \label{eta n-d better decay} 
    \end{equation}
    and
    \begin{equation}
      u = \nabla \left(   \frac{p \cdot x}{|x|^n} \right)  + O\left( \frac{1}{|x|^{n+\varepsilon}} \right), \qquad \textrm{as } |x| \to \infty. \label{u n-d better decay} 
    \end{equation}
  \end{subequations}

\item For vorticity of Case~\VS, there exists $p_\pm = (p_\pm^\prime,0) \in \R^n$ such that 
  \begin{subequations}\label{vortex sheet better decay}
    \begin{equation}
      \eta = \frac{1}{g\jump{\rho}} \jump{\rho p^\prime} \cdot \nabla \left( \frac{c^\prime \cdot x^\prime}{|x^\prime|^n} \right) + O\left( \frac{1}{|x^\prime|^{n+\varepsilon}} \right), \qquad \textrm{as } |x^\prime| \to \infty, \label{vortex sheet eta better decay} 
    \end{equation}
    and 
    \begin{equation}
      u_\pm = \nabla \left( \frac{p_\pm \cdot x}{|x|^n} \right) + O\left( \frac{1}{|x|^{n+\varepsilon}} \right), \qquad \textrm{as } |x| \to \infty. \label{vortex sheet u better decay} 
    \end{equation}
  \end{subequations}
  \end{enumerate}
\end{theorem}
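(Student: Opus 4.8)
The plan is to decompose the velocity, expand each piece in multipoles at infinity, and then feed the result into the boundary conditions to recover $\eta$. Following the construction in Section~\ref{splitting section}, in Case~\I\ I would write $u = v + \nabla\varphi$, where $v$ is the Biot--Savart field generated by $\omega$ (extended off $\Omega$ in the natural way, which is legitimate because $\omega$ is divergence free with vanishing normal trace on $S$ by \eqref{3d vanishing assumption}) and $\varphi$ is harmonic in $\Omega$ with $\varphi = o(|x|^{2-n})$; in Case~\VS\ there is no bulk vorticity, so $u_\pm = \nabla\varphi_\pm$ with $\varphi_\pm$ harmonic and decaying in $\Omega_\pm$, and the two cases run in parallel with $v\equiv 0$. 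The moment hypothesis in \eqref{vorticity assumptions} (with $k > n^2$, far more than enough) lets one Taylor-expand the Biot--Savart kernel about the origin and integrate against $\omega$, giving $v = v^{(0)} + v^{(1)} + O(|x|^{-n-\varepsilon})$, where $v^{(0)}$ is homogeneous of degree $1-n$ (a multiple of $\int_\Omega\omega$, or of its image-corrected version) and $v^{(1)} = \nabla\big(q\cdot x/|x|^n\big)$ is a dipole field determined by the first moment $\int_\Omega y\,\omega(y)\,dy$; for $n=2$ this is transparent since $v$ is $\nabla^\perp$ of a Newtonian potential. The leading term $v^{(0)}$ vanishes: automatically when $n=3$, since integrating $\nabla\cdot(x_i\omega)=\omega_i$ over $\Omega$ and using \eqref{3d vanishing assumption} gives $\int_\Omega\omega = 0$; and when $n=2$, because a nonzero $v^{(0)}$, being $\sim|x|^{1-n}$ and not absorbable into a single-valued gradient that decays, would contradict \eqref{u decay}. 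Thus in all cases $v = \nabla\big(q\cdot x/|x|^n\big) + O(|x|^{-n-\varepsilon})$.

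Next I would pin down $\varphi$. It is harmonic in $\Omega$, decays like $o(|x|^{2-n})$, and obeys the Neumann condition $\partial_N\varphi = (c-v)\cdot N$ on $S$ inherited from the kinematic condition. Reflecting it evenly across the asymptotic hyperplane $\{x_n = 0\}$ --- with the discrepancy from the true, curved surface controlled by the decay \eqref{eta decay} of $\eta$ and by the decay of the Neumann data --- produces a function harmonic near infinity in $\R^n$, to which the classical exterior multipole expansion applies. The monopole term is ruled out by $\varphi = o(|x|^{2-n})$, so $\varphi$ equals a dipole potential plus $O(|x|^{1-n-\varepsilon})$, whence $\nabla\varphi = \nabla\big(r\cdot x/|x|^n\big) + O(|x|^{-n-\varepsilon})$. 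Adding this to the expansion of $v$ gives $u = \nabla\big(p\cdot x/|x|^n\big) + O(|x|^{-n-\varepsilon})$ with $p := q+r$ (respectively $p_\pm := r_\pm$ in Case~\VS). To see that $p = (p',0)$ is purely horizontal, note that $u_n$ is harmonic near infinity and evaluate it on $\{x_n = 0\}$ two ways: from the dipole expansion, $u_n(x',0) = p_n/|x'|^n + O(|x'|^{-n-\varepsilon})$, whereas by continuity from $S$, where the kinematic condition gives $u_n = (u'-c')\cdot\nabla'\eta = O(|x'|^{-n-\varepsilon})$, one also gets $u_n(x',0) = O(|x'|^{-n-\varepsilon})$; comparing forces $p_n = 0$. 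Restricting $u$ to $x_n = 0$ then produces $u'|_S = \nabla'\big(p'\cdot x'/|x'|^n\big) + O(|x'|^{-n-\varepsilon})$, which is \eqref{u n-d better decay} (resp.\ \eqref{vortex sheet u better decay}).

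To recover $\eta$, I would take the component of the Euler equation tangent to $S$ and use $(u-c)\cdot N = 0$, so that the rotational term (normal to $S$, since $(u-c)$ is tangent there) drops out, leaving the Bernoulli identity $\tfrac12|u|^2 - c\cdot u + P + g\eta = \text{const}$ on $S$; together with the dynamic condition $P = \sigma\,\nabla\cdot N$ (and, in Case~\VS, its two-fluid form with the density jump), this becomes $g\eta = c'\cdot u'|_S - \tfrac12|u|^2 - \sigma\,\nabla\cdot N + \text{const}$, the constant being $0$ after letting $|x'|\to\infty$. Here $|u|^2 = O(|x'|^{-2(n-1)-2\varepsilon}) = O(|x'|^{-n-\varepsilon})$; and when $\sigma > 0$ the relation is an elliptic equation $(g - \sigma\Delta')\eta = c'\cdot u'|_S + (\text{l.o.t.})$ whose solution inherits the homogeneity of its source, so the second $x'$-derivatives of $\eta$ are $O(|x'|^{-n})$ and hence $\sigma\,\nabla\cdot N = O(|x'|^{-n-2}) + (\text{l.o.t.}) = O(|x'|^{-n-\varepsilon})$. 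Substituting the dipole form of $u'|_S$ and using the pointwise identity $c'\cdot\nabla'\big(p'\cdot x'/|x'|^n\big) = p'\cdot\nabla'\big(c'\cdot x'/|x'|^n\big)$ (valid since the Hessian of $x'\mapsto x_j'/|x'|^n$ is symmetric) yields \eqref{eta n-d better decay}; in Case~\VS\ one has $\jump{\rho u'}|_S = \nabla'\big(\jump{\rho p'}\cdot x'/|x'|^n\big) + O(|x'|^{-n-\varepsilon})$, and the density-weighted Bernoulli relation produces the combination $\jump{\rho p'}/(g\jump{\rho})$ of \eqref{vortex sheet eta better decay}.

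The main obstacle is the reflection-and-multipole analysis on the genuinely curved, unbounded domain $\Omega$: one must carry out the expansions with boundary data on a surface that is only asymptotically flat and track every error term so that the reflection does not reintroduce a monopole into $\varphi$ (which quietly uses $\int_{\R^{n-1}}\eta\,dx' = 0$ from Theorem~\ref{no excess mass theorem} to handle the fluxes of $v$ and $c$ through $S$) and so that the would-be vertical component of $p$ genuinely lies below order $|x|^{-n}$. This is exactly where the weighted estimates for the Poisson equation on unbounded domains do the real work, and where the generous exponents $n-1+|\beta|+4\varepsilon$ and $n-1+\varepsilon$ in \eqref{decay assumptions} are needed; the remaining steps are Taylor expansion and bookkeeping.
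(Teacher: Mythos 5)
Your overall architecture --- split $u$ into a vortical part and a harmonic potential, expand each as a dipole plus error, and feed the result into the Bernoulli and kinematic conditions --- is the same as the paper's, and your treatment of the vortical part in three dimensions, of the identity $c'\cdot\nabla'(p'\cdot x'/|x'|^n) = p'\cdot\nabla'(c'\cdot x'/|x'|^n)$, and of the surface-tension term (via the ellipticity of $g-\sigma\Delta'$) all track the actual argument. However, there are two genuine gaps. First, in two dimensions your claim that the monopole $v^{(0)}$ of the plain Biot--Savart field vanishes is false: the total vorticity $\varpi = \int_\Omega\omega\,dx$ need not be zero (for the point-vortex waves of Theorem~\ref{existence theorem} it is exactly the bifurcation parameter $\varpi\neq 0$), and the decay hypothesis \eqref{u decay} constrains only the \emph{sum} $u=v+\nabla\varphi$, so the $O(1/|x|)$ circulation tail $\tfrac{\varpi}{\gamma_2}\,x^\perp/|x|^2$ of $v$ can be, and generically is, cancelled by a bounded single-valued angular part of $\varphi$ rather than forced to vanish. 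The paper's definition \eqref{2-d V def} inserts a ``phantom vortex'' of strength $-\varpi$ at a point $\xi^*\in\Omega^c$ precisely to remove this tail from $V$; without that device neither your $v$ nor your $\nabla\varphi$ is a dipole at infinity, and the subsequent expansion breaks down at leading order.

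Second, and more centrally, the step ``reflect $\varphi$ evenly across $\{x_n=0\}$ and apply the classical exterior multipole expansion'' is not a proof. Even reflection of a harmonic function across a hyperplane is harmonic only when the normal derivative vanishes there; here $\partial_N\varphi=(c-V)\cdot N$ does not vanish, so the reflected function carries a single-layer source supported on the whole (non-compact) surface. The classical multipole expansion requires the source to be compactly supported, or at least to have absolutely convergent low-order moments; but by \eqref{NV asymptotics} the density contains the term $-m_n/(\gamma_n|x'|^n)$, whose first moment $\int_{\R^{n-1}}|y'|\cdot|y'|^{-n}\,dy'$ diverges logarithmically, so the dipole coefficient of your layer potential is not even well defined without an explicit subtraction. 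This borderline failure is exactly the analytic crux of the theorem, and it is why the paper proceeds differently: Lemma~\ref{dipole lemma} Kelvin-transforms a neighborhood of infinity to a boundary point $\tilde x=0$ of $\Omega^\sim$, converts the kinematic condition into an oblique boundary condition \eqref{transform boundary condition} with coefficients shown to be $C^\varepsilon$ via Lemma~\ref{lem inversion}, and then invokes Schauder theory for oblique derivative problems to get $\tilde\varphi\in C^{1+\varepsilon}$, which is precisely the statement that $\varphi$ is a dipole plus an $O(|x|^{1-n-\varepsilon})$ error. Your mechanism for extracting $p_n=0$ from the kinematic condition on $\{x_n=0\}$ is sound and is the untransformed analogue of the paper's evaluation $\tilde b(0)=m_n/\gamma_n$, but it only becomes available once the expansion of $\varphi$ has been rigorously established.
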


\begin{remark} \label{2-d special case remark}
  In two dimensions, \eqref{eta n-d better decay} simplifies to
  \[ 
    \eta = -\frac{1}{g} \frac{p^\prime c^\prime}{|x^\prime|^2}  + O \left( \frac{1}{|x^\prime|^{2+\varepsilon}} \right), \qquad \textrm{as } |x^\prime| \to \infty. 
  \]
  Thus $\eta$ is strictly positive in a neighborhood of infinity whenever $p'c' < 0$.
\end{remark}

This generalizes the recent work of Wheeler \cite{wheeler2016integral} on the irrotational case. In the two-dimensional irrotational and vortex sheet cases, Sun \cite{sun1997analytical} established similar decay rates (but not asymptotics) under analogous assumptions. Such asymptotic behavior is assumed by Longuet-Higgins in the mostly numerical paper \cite{longuet1989capillary}, and is also considered by Benjamin and Olver \cite[Section~6.5]{benjamin1982hamiltonian}.

An important quantity describing the vorticity distribution in a water wave is the so-called \emph{vortex impulse} defined by 
\begin{equation}
\label{vortex impulse}
  m := \left\{\begin{array}{ll} 
    - \displaystyle \int_\Omega \omega (x - \xi^*)^{\perp} \, dx \quad & \text{if } n = 2, \\ \\
    - \displaystyle \frac 12 \int_\Omega \omega \times x \, dx  & \text{if } n = 3.
  \end{array}\right.
\end{equation}
See also \cite[Chapter 3.2, 3.7]{saffman1992book}. The following identity reveals a link between the vortex impulse $m$ and the dipole moment $p$ appearing in Theorem~\ref{decay theorem}. It is, in particular, essential to the proof of Theorem~\ref{existence theorem}(c).
\begin{theorem}[Dipole moment formula] \label{integral identity theorem} 
  Consider a solitary wave as in Theorem~\ref{no excess mass theorem}.
  \begin{enumerate}[label=\rm(\alph*)]
  \item  For vorticity of Case~\I,
    \begin{equation}
      \int_{\Omega} \left[  |u|^2 - (u-c) \cdot \left( V +  {1 \over \gamma_n} \nabla\left( {m \cdot x \over |x|^n} \right) \right) \right] \, dx = -\frac{\gamma_n}{2} \; c \cdot  p, \label{dipole formula} 
    \end{equation}
    where $V$ is the vortical part of the velocity defined in \eqref{n-d V def}.

\item  For vorticity of Case~\VS,
  \begin{equation}
    \int_\Omega \rho |u|^2  \, dx =  \frac{\gamma_n}{2}    c \cdot \jump{\rho p}. \label{vortex sheet dipole formula} 
  \end{equation}
  \end{enumerate}
\end{theorem}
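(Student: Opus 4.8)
The plan is to derive both identities by an integration-by-parts argument: convert the volume integrals on the left into surface integrals over the free boundary $S$ and over a large sphere $\partial B_R$, and then let $R\to\infty$. I would treat Case~\VS\ first, where $u_\pm=\nabla\varphi_\pm$ is harmonic in $\Omega_\pm$. Writing $|u_\pm|^2=\nabla\cdot(\varphi_\pm\nabla\varphi_\pm)$ and applying the divergence theorem on $\Omega_\pm\cap B_R$, the $\partial B_R$ contribution vanishes as $R\to\infty$ (since $\varphi_\pm$ tends to a constant and $u_\pm$ decays like a dipole), while the contribution over $S$ equals $\int_S\varphi_\pm(u_\pm\cdot N_\pm)\,dS=\int_S\varphi_\pm(c\cdot N_\pm)\,dS$ by the kinematic condition. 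Summing the two layers gives $\int_\Omega\rho|u|^2\,dx=c\cdot\int_S\jump{\rho\varphi}N_+\,dS$. To evaluate the remaining surface integral in terms of $p_\pm$, I would apply the Green representation formula to the harmonic function $\varphi_\pm-\lim_{|x|\to\infty}\varphi_\pm$ on $\Omega_\pm$ and match its leading far field with \eqref{vortex sheet u better decay}; this expresses the horizontal part of $\int_S\varphi_\pm N_\pm\,dS$ as a fixed multiple of $p_\pm$ (the limiting constant drops out upon contracting with the horizontal vector $c$, since $\int_{\R^{n-1}}\nabla\eta=0$). The dynamic condition $\jump P=\sigma\nabla\cdot N_+$ together with the layerwise Bernoulli laws then pins the constant to $\gamma_n/2$.

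For Case~\I\ I would use the splitting $u=\nabla\varphi+V$ of Section~\ref{splitting section}, $V$ being the Biot--Savart field of $\omega$. The crucial preliminary is the far field of $V$: expanding the Biot--Savart kernel, using the moment bounds in \eqref{vorticity assumptions} and the vanishing of the net vorticity (which follows from \eqref{u decay}), one gets $V=-\tfrac1{\gamma_n}\nabla\big(\tfrac{m\cdot x}{|x|^n}\big)+O(|x|^{-n-1})$ with $m$ exactly the vortex impulse \eqref{vortex impulse}; equivalently $W:=V+\tfrac1{\gamma_n}\nabla\big(\tfrac{m\cdot x}{|x|^n}\big)$ is absolutely integrable against bounded functions. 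Setting $\psi:=\varphi-\tfrac1{\gamma_n}\tfrac{m\cdot x}{|x|^n}$, so that $u=\nabla\psi+W$ and, by Theorem~\ref{decay theorem} and the matching above, $\psi=\text{const}+\tfrac{p\cdot x}{|x|^n}+(\text{lower order})$ at infinity, one has the pointwise identity $|u|^2-(u-c)\cdot W=\nabla\psi\cdot u+c\cdot W$. Integrating this over $\Omega\cap(B_R\setminus B_\rho)$, where $B_\rho$ excises the origin in case $0\in\Omega$, and using $\nabla\cdot u=\nabla\cdot W=0$, the kinematic and dynamic conditions and Bernoulli's law on $S$, and the explicit form of $W$, reduces the left-hand side to a finite collection of boundary integrals over $S$, $\partial B_\rho$ and $\partial B_R$. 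The $\partial B_\rho$ integrals cancel as $\rho\to 0$; the $\partial B_R$ integrals together with the free-surface term produce $-\tfrac{\gamma_n}{2}c\cdot p$ as $R\to\infty$, the factor $p$ again arising from a Green-representation/dipole-matching computation on $S$.

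I expect the main difficulty to lie in the convergence bookkeeping and the exact matching of constants. One has to check that the individually divergent pieces --- $\int_\Omega V$, $\int_\Omega\nabla\big(\tfrac{m\cdot x}{|x|^n}\big)$, and (when $0\in\Omega$) the contributions near the origin --- recombine into convergent ones, using the vanishing net vorticity and the oddness of the dipole field; to compute each of the finitely many boundary integrals over $\partial B_R$ and $\partial B_\rho$ exactly, remembering that $\partial B_R\cap\Omega$ limits to a half-sphere; and, most importantly, to identify the dipole part of $V$ with the vortex impulse $m$ of \eqref{vortex impulse}, which is precisely what links the left-hand side of \eqref{dipole formula} to $m$ and makes the identity usable in Theorem~\ref{existence theorem}(c). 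The algebra on the free surface, where the kinematic condition, the dynamic condition, and Bernoulli's law are combined, will be the most delicate step.
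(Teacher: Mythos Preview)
Your overall strategy---choose a vector field with the correct divergence, apply the divergence theorem on $\Omega\cap B_R$, and read off $p$ from the limits---is the same as the paper's, but your specific choices make the argument considerably harder than it needs to be and leave a real gap on the free surface.

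The paper's key device is the single vector field
\[
A = \Big(\varphi - \tfrac{1}{\gamma_n}\tfrac{m\cdot x}{|x|^n}\Big)(u-c) + (c\cdot x)\,u
\qquad\text{(and }A=\rho\varphi(\nabla\varphi-c)+\rho(c\cdot x)\nabla\varphi\text{ for Case~\VS).}
\]
Its divergence is exactly the integrand, and---this is the point---on $S$ the kinematic condition $u\cdot N=c\cdot N$ alone gives $A\cdot N=(c\cdot x)(c\cdot N)$. That surface term integrates by parts to a multiple of $\int\eta\,dx'$, which vanishes by Theorem~\ref{no excess mass theorem}. The dynamic condition and Bernoulli's law are never invoked here. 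All of the dipole information then comes cleanly from $\int_{\partial B_R\cap\Omega}A\cdot N\,dS$, where one just inserts the known asymptotics for $\varphi$ and $V$ and computes an explicit half-sphere integral.

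Your route instead uses (effectively) the field $\psi u+(c\cdot x)W$ in Case~\I, and $\varphi\nabla\varphi$ in Case~\VS. For the vortex sheet this makes the $\partial B_R$ term vanish, so the entire weight of the identity is thrown onto $\int_S\jump{\rho\varphi}(c\cdot N_+)\,dS$, which you then propose to evaluate by a Green-representation/dipole-matching argument and ``the dynamic condition together with the layerwise Bernoulli laws.'' This is the gap: Green's representation for $\varphi_\pm$ expresses $p_\pm$ in terms of \emph{two} boundary moments, roughly $\int_S\varphi_\pm N_\pm\,dS$ and $\int_S x\,(\partial_N\varphi_\pm)\,dS$, not $\int_S\varphi_\pm(c\cdot N_\pm)\,dS$ alone, and neither Bernoulli nor the dynamic condition helps separate them. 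The clean way out is to add to your field the divergence-free correction $-\varphi c+(c\cdot x)\nabla\varphi$ (or $-\psi c+(c\cdot x)\nabla\psi$ in Case~\I); this is precisely how one arrives at the paper's $A$. Once you do that, the $S$ integral collapses to the no-excess-mass term and the $\partial B_R$ integral produces $-\tfrac{\gamma_n}{2}c\cdot p$ directly.

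A smaller issue: your claim that ``the $\partial B_\rho$ integrals cancel as $\rho\to 0$'' when you excise the origin is not right as stated. With your field, $A=O(|x|^{-(n-1)})$ near $0$, so $\int_{\partial B_\rho}A\cdot N\,dS$ has a finite, generally nonzero limit (e.g.\ a multiple of $m\cdot u(0)$). This has to be reconciled with the matching borderline singularity of the integrand at $0$; the cancellation is there, but it is not that the boundary term vanishes.
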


Formulas \eqref{dipole formula} and \eqref{vortex sheet dipole formula} agree in the limit where $\omega$ and $\rho_+$ vanish. In this case they recover the formula
obtained by Wheeler in \cite{wheeler2016integral}; also see \cite{longuet1989capillary}. With nontrivial vorticity in the bulk, it becomes considerably more difficult to find clean expressions for $p$, as the definition of $V$ is not in any way related to the location of the interface $S$.  

The asymptotics in Theorem~\ref{decay theorem} and the identity in Theorem~\ref{integral identity theorem}(b) have the following corollary.

\begin{corollary}[Angular momentum] \label{angular momentum corollary} 
  Assume there exists a solitary wave with localized vorticity satisfying the assumptions of Theorem \ref{no excess mass theorem}.  If the total angular momentum 
  \[
    \left\{
      \begin{array}{ll} 
        \displaystyle  \int_\Omega x^\perp \cdot \rho u \, dx \quad & \text{if } n = 2, \\ \\
        \displaystyle \int_\Omega x \times \rho u \, dx  & \text{if } n = 3
      \end{array}
    \right.
  \]
  is finite then $p=0$. (Here $\rho \equiv 1$ in Case~\I.) Furthermore, for a vortex sheet every nontrivial wave has infinite angular momentum.
\end{corollary}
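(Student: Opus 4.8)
The plan is to substitute the sharp dipole asymptotics of Theorem~\ref{decay theorem} into the defining integral for the angular momentum and to show that, unless the relevant dipole moment vanishes, the truncated angular momentum over the ball $B_R$ of radius $R$ centred at the origin grows \emph{linearly} in $R$ as $R\to\infty$, which is incompatible with finiteness. Fix $R_0$ large enough that the expansions of Theorem~\ref{decay theorem} hold on $\Omega\setminus B_{R_0}$, and write $u = u_d + r$ there, where $u_d := \nabla(p\cdot x/|x|^n)$ is the dipole field and $r = O(|x|^{-n-\varepsilon})$. Since $p=(p',0)$, the angular-momentum density of $u_d$ is elementary to compute: in two dimensions, using $x^\perp\cdot\nabla = \partial_\theta$,
\[
  x^\perp\cdot u_d = -\,\frac{p'\,x_2}{|x|^2},
\]
while in three dimensions $x\times u_d = (x\times p)/|x|^3$. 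In both cases this density is odd under $x\mapsto-x$, hence has vanishing average over spheres centred at the origin, so its integral over a \emph{full} annulus $B_R\setminus B_{R_0}$ is zero. The decisive point is that $\Omega\cap(B_R\setminus B_{R_0})$ differs from the \emph{half}-annulus $\{x_n<0\}\cap(B_R\setminus B_{R_0})$ only by a thin set, and over the half-annulus the integral is elementary in polar/spherical coordinates: it equals $2p'(R-R_0)$ when $n=2$, and $-\pi(R-R_0)\,(e_3\times p)$ when $n=3$ (using $\int_{\{x_3<0\}\cap(B_R\setminus B_{R_0})} x/|x|^3\,dx = -\pi(R-R_0)e_3$), which is a nonzero multiple of $(p_2,-p_1,0)$ precisely when $p\neq 0$.

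The remaining task is to control the two corrections. The symmetric difference between $\Omega\cap(B_R\setminus B_{R_0})$ and the half-annulus is a thin layer of vertical thickness $O(|\eta(x')|)=O(|x'|^{-(n-1)-4\varepsilon})$, on which $|x^\perp\cdot u_d| = O(|x'|^{-(n-1)})$; integrating, this contributes an $R$-independent $O(1)$. The remainder obeys $\int_{\Omega\cap(B_R\setminus B_{R_0})}|x|\,|r|\,dx \lesssim \int_{R_0}^R t^{-\varepsilon}\,dt = O(R^{1-\varepsilon}) = o(R)$, while the integral over $\Omega\cap B_{R_0}$ is a fixed finite constant (finite since $u\in L^1_{\mathrm{loc}}(\Omega)$, even when point vortices are present). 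As $\rho$ is constant on each connected component of $\Omega$, these estimates give
\[
  \int_{\Omega\cap B_R} x^\perp\cdot\rho\,u\,dx = \kappa\,R + o(R)\qquad(R\to\infty)
\]
(with $x\times$ in place of $x^\perp\cdot$ when $n=3$), where in Case~\I\ the constant $\kappa$ is a nonzero multiple of $p'$ whenever $p\neq 0$. In Case~\VS\ the same computation applies in each layer, with $\Omega_\pm\simeq\{\pm x_n>0\}$, and the two contributions combine so that $\kappa$ is a nonzero multiple of $\jump{\rho p'}$ whenever $\jump{\rho p}\neq0$. In either case, finiteness of the angular momentum forces $\kappa=0$: that is, $p=0$ in Case~\I\ and $\jump{\rho p}=0$ in Case~\VS.

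For the vortex-sheet assertion, suppose such a wave has finite angular momentum. By the above, $\jump{\rho p}=0$, so Theorem~\ref{integral identity theorem}(b) yields $\int_\Omega \rho\,|u|^2\,dx = \tfrac{\gamma_n}{2}\,c\cdot\jump{\rho p} = 0$. Since $\rho_\pm>0$, this forces $u\equiv 0$ a.e.\ in $\Omega$; comparing with the dipole expansion then gives $p_\pm=0$, and a routine argument using the hydrostatic balance across $S$ shows $\eta\equiv 0$ as well, so the wave is trivial. Equivalently, every nontrivial vortex-sheet solitary wave has infinite angular momentum.

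The main obstacle is the tension between the available decay and the dimension in these estimates: the hypothesis \eqref{u decay} on its own gives only $x^\perp\cdot u = O(|x|^{-(n-2)-\varepsilon})$, whose annular integral grows like $R^{2-\varepsilon}$, so the sharp dipole expansion of Theorem~\ref{decay theorem} is genuinely indispensable; moreover $x^\perp\cdot u_d$ lies exactly at the threshold of integrability against $dx$ over a full-dimensional region, so everything hinges on the cancellation supplied by its vanishing spherical average --- which persists only because $\Omega$ is asymptotically a half-space --- and on verifying that neither the boundary layer near $S$ nor the remainder $r$ disturbs the single surviving $O(R)$ term or its coefficient.
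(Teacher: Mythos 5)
Your proposal is correct and takes essentially the same approach as the paper: the paper likewise substitutes the dipole asymptotics of Theorem~\ref{decay theorem} into the angular-momentum integral, observes that the flux of the angular-momentum density through $\partial B_R(0)\cap\Omega$ tends to a nonzero constant multiple of $p\times e_n$ (equivalently, your linear-in-$R$ growth of the truncated integral) unless $p'=0$, and then invokes $p_n=0$ together with the dipole-moment formula \eqref{vortex sheet dipole formula} to conclude triviality in the vortex-sheet case. Your treatment of the error terms (the thin layer near $S$, the remainder $r$, and the bounded region) is somewhat more explicit than the paper's, but the underlying argument is the same.
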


In Case~\I, owing to the more complicated formula \eqref{dipole formula} for $p$, we cannot conclude that waves with finite angular momentum are trivial. However, the fact that $p=0$ does imply that the leading-order terms for $u$ and $\eta$ in the asymptotics \eqref{eta n-d better decay} vanish.

While we do not pursue this here, all of the above results hold when the free surface $S$ is overhanging.  It is only necessary that $S$ is smooth, does not self-intersect, and is asymptotically flat in the sense that, outside of some bounded neighborhood of $0$, it is given as the graph of a function $\eta$ satisfying \eqref{eta decay}.

\section{Existence of waves with algebraic decay} \label{existence section}
In this section we prove Theorem~\ref{existence theorem} on the existence of steady water waves with localized vorticity and show that these waves exhibit the algebraic decay rates required by our asymptotic theory.   

\subsection{Splitting of \texorpdfstring{$u$}{u}} \label{splitting section}
When analyzing rotational water waves, it is often useful to decompose the velocity field into an irrotational part, given as the gradient of a potential $\varphi$, and a vortical part $V$:
\begin{equation}
  u = \nabla \varphi + V \qquad \textrm{in } \Omega,\label{varphi V splitting} 
\end{equation}
where $\varphi$ is harmonic and $V$ is divergence free.  How we choose to define $V$ will depend on the dimension.  Consider first the case $n =2$.  Then the fact that $u$ is divergence free implies that there exists a stream function $\psi_V$ with $V = \nabla^\perp \psi_V$.  In light of \eqref{varphi V splitting}, it must also be true that $\Delta \psi_V = \omega$, and hence $\psi_V$ can be expressed as the Newtonian potential of $\omega$ up to a harmonic function.   With that in mind, we define 
\begin{subequations} \label{n-d V def}
\begin{equation}
  V := \nabla^\perp \left[ \left(\frac{1}{\gamma_2} \log{|\cdot|}\right) * \omega - \frac{1}{\gamma_2} \varpi  \log{|\cdot -  \xi^* |} \right]  \qquad \textrm{if } n =2, \label{2-d V def} 
\end{equation} 
where 
\[ \varpi := \int_{\Omega} \omega_{\mathrm{ac}} \, dx + \sum_{i=1}^M \varpi^i \]
is the (total) vortex strength and $\xi^*$ is a fixed point in $\Omega^c$.  The second term in \eqref{2-d V def} can be thought of as a ``phantom vortex'' in the air region that counterbalances the total vorticity in the water.   By including it, we ensure that the splitting \eqref{varphi V splitting} decomposes $u$ as the sum of two $L^2$ functions on any closed subset of $\Omega \setminus \Xi$.  

In the three-dimensional case, the Green's function for the Laplacian enjoys better decay at infinity, and thus we have this desirable splitting property without  needing to introduce phantom vortices.  Indeed, we define $V$ directly using the Biot--Savart law,
\begin{equation}
  {V}(x) := \frac{1}{\gamma_3} \int_{\Omega} \omega(y) \times \frac{x-y}{|x-y|^3}  \, dy \qquad \textrm{if } n = 3.\label{3-d V def} 
\end{equation} 
Thanks to \eqref{3d vanishing assumption}, we show in Appendix~\ref{V appendix} that $\int_\Omega \omega\, dx = 0$; see \eqref{net vorticity zero}.
\end{subequations}

\subsection{Existence theory in Sobolev spaces}\label{sec sobolev}
Let us recall two results from \cite{shatah2013travelling} in some detail.  For each $k > 3/2$, there exists a one-parameter family of two-dimensional capillary-gravity solitary waves with a point vortex 
\[ \mathscr{C}_{\mathrm{loc}} = \left \{ (\eta(\varpi), u(\varpi), c(\varpi)) : |\varpi| < \varpi_0  \right\} \]
such that 
\[ \eta(\varpi) \in H_{\mathrm{e}}^k(\R),\qquad u(\varpi) - V(\varpi) \in H_{\mathrm{e}}^{k-1}(\Omega(\varpi)) \times H_{\mathrm{o}}^{k-1}(\Omega(\varpi)), \qquad \omega(\varpi) = \varpi \delta_{(0,-1)},\]
where $\Omega(\varpi)$ is the fluid domain corresponding to $\eta(\varpi)$, $\omega(\varpi)$ is the vorticity for the velocity field $u(\varpi)$, and $V(\varpi)$ is the function given by \eqref{n-d V def}.  The subscripts of `e' and `o' indicate evenness or oddness with respect to $x_1$, respectively.  This curve bifurcates from the absolutely trivial state of no motion:  $(\eta(0), u(0), c(0)) = (0,0,0)$,
and to leading order the solutions have the form 
\begin{equation} \label{point vortex leading order}  
  \begin{split} 
    &c(\varpi)  = \left(-\frac{\varpi}{2\gamma_2}  + o(\varpi^2) \right) e_1, \qquad \| u(\varpi) - V(\varpi) \|_{{H}^{k-1}(\Omega(\varpi))}  = O(|\varpi|^3), \\
    & \left\| \eta(\varpi) - \frac{\varpi^2}{4\pi^2} (g-\sigma \partial_{x_1}^2)^{-1} \left( \frac{x_1^2 - 1}{(1+x_1^2)^2} \right) \right\|_{H^k(\R)} = O(|\varpi|^3).   
  \end{split} 
\end{equation}

In the same paper, the authors construct a three-parameter family of traveling capillary-gravity waves with a vortex patch:
\[ \mathscr{S}_{\mathrm{loc}} = \big\{ (\eta,u,c)(\varpi,\rho,\tau) : |\varpi| < \varpi_0, ~ 0 < \rho <  \rho_0, ~ |\tau| < \tau_0 \big\}.\]
Here, $\varpi$ is the total vorticity, $\rho$ measures the approximate radius of $\supp{\omega(\varpi)}$, and $\tau$ arises due to a certain degeneracy in the linearized problem at the trivial state.   From now on we suppress the dependence on $\rho$ and $\tau$, which we hold fixed. These waves lie in $L^2$-based Sobolev spaces: the free surface profile  $\eta(\varpi) \in H^k_{\mathrm{e}}(\R)$, and the velocity field $u(\varpi) \in L^2_{\mathrm{e}}(\Omega(\varpi)) \times L^2_{\mathrm{o}}(\Omega(\varpi))$.  The vorticity $\omega(\varpi)$ has compact support $D(\varpi)$ that is a perturbation of a ball centered at $(0,-1)$:
 \[ \partial D(\varpi, \rho, \tau) = \left\{ \rho\left( \cos{\theta} + \tau \sin{(2\theta)}, \, \sin{\theta} - \tau \cos{(2\theta)} \right) + O(\rho^2(\rho+\varpi)) : \theta \in [0,2\pi) \right\}.\]
 Moreover, $u(\varpi) \in H^k(\Omega \setminus D)$ and $\omega(\varpi) \in H^1(D)$.  As in the point vortex case, this family bifurcates from the absolutely trivial state. One can show that these waves have the same leading order form as in \eqref{point vortex leading order}.

It is worth noting that, in fact, many families of the form $\mathscr{S}_{\mathrm{loc}}$ exist:  the argument in \cite{shatah2013travelling} allows one to select at the outset the value of the vorticity on each streamline in the patch $D$ from a fairly generic class of distributions.  For any such choice, there exists a corresponding $\mathscr{S}_{\mathrm{loc}}$; see \cite[Remark 2.2(b)]{shatah2013travelling}.  

The proofs in \cite{shatah2013travelling} work by applying implicit function-type arguments to nonlinear operators between Sobolev spaces. For point vorticies, the unknowns are $\eta,\varphi,c$ with $\varpi$ as a parameter, while for vortex patches there is an additional variable representing the shape of the patch, as well as two additional parameters $\rho,\tau$. In both cases the relevant linearized operators have an upper triangular structure which implies that $\eta,\varphi$ are (locally) uniquely determined by the remaining variables and parameters \cite[Lemma~4.1 and the proof of Theorem~2.1]{shatah2013travelling}.

While the above existence theory furnishes a leading-order description of these waves, it unfortunately does not say much about their pointwise behavior at infinity; knowing $\eta$ up to $O(\varpi^3)$ in $H^k(\R)$ is not sufficient to infer an explicit decay rate, much less the specific algebraic asymptotics claimed in \eqref{SWZ asymptotics}.

Our strategy in proving Theorem \ref{existence theorem} is to reconsider the local uniqueness of $\eta,\varphi$ in weighted H\"older spaces, again taking advantage of the upper triangular structure of the linearized equations. We treat $c$ and $\omega$ as given functions of the parameters, coming from the solutions in Sobolev spaces, and neglect the Euler equations \eqref{steady euler u} and advection condition \eqref{u vortex advection} inside the fluid. Since our weighted H\"older spaces are contained in the relevant Sobolev spaces, the local uniqueness (in both spaces) implies that the Sobolev and weighted H\"older solutions coincide.

\subsection{Change of variables} 
Fix any Sobolev regularity index  $k \geq 5$ and take either the family of waves with a point vortex $\mathscr{C}_{\mathrm{loc}}$ or the family of solitary waves with a vortex patch $\mathscr{S}_{\mathrm{loc}}$.  In the latter case, fix some (nonzero) values for $(\rho, \tau)$ and consider the corresponding curve lying in $\mathscr{S}_{\mathrm{loc}}$ parameterized by $\varpi$.  Note that by Morrey's inequality, $H^k(\R) \subset C^{3+\alpha}_\bdd(\R)$, for any $\alpha \in (0,1)$.

Let $\eta \in H_{\mathrm{e}}^k(\R)$ with $\Omega = \{ x \in \R^2 : x_2 < \eta(x_1) \}$ the corresponding fluid domain. We wish to study the vector fields $u :\Omega \to \R^2$ that can be written as
\begin{align}
  \label{u ansatz}
  u = \nabla^\perp \psi + V(\varpi)
\end{align}
for some harmonic function $\psi \in \dot{H}_{\mathrm{e}}^k(\Omega)$, where $V(\varpi)$ is the divergence free vector field defined according to \eqref{n-d V def} using the vorticity $\omega(\varpi)$ given by the family that we have selected and with $\xi^* = (1,0)$.   Note that the local uniqueness of solutions implies that $u$ is the velocity field for a solitary wave with wave velocity $c(\varpi)$ and vorticity $\omega(\varpi)$ if and only if $\eta = \eta(\varpi)$ and $\psi$ is a harmonic conjugate of $\varphi(\varpi)$.   Plugging the ansatz \eqref{u ansatz} for $u$ into the boundary conditions \eqref{u boundary cond} and using Bernoulli's law, we see that $\psi$ must satisfy the following elliptic system:
\begin{subequations} \label{existence: psi Eulerian equation}
  \begin{alignat}{2}
    \label{laplace}
    \Delta\psi &= 0 &\qquad& \textrm{in } \Omega, \\
    \label{dynamic}
    \frac 12 |\nabla^\perp\psi + \nabla^\perp \psi_V(\varpi) + c(\varpi)|^2 + g\eta -\sigma 
    \frac{\eta_{xx}}{(1+\eta_x^2)^{3/2}}
     &= 0 && \textrm{on } S,\\
    \label{kinematic}
    \psi + \psi_V(\varpi) - c_1(\varpi)\eta &= 0
    && \textrm{on } S,
  \end{alignat}
\end{subequations}
where $\psi_V(\varpi)$ is the stream function for $V(\varpi)$ in the sense that $\nabla^\perp \psi_V(\varpi) := V(\varpi)$. Note also that  \eqref{kinematic} results from taking the kinematic boundary condition in \eqref{u boundary cond}, re-expressing it in terms of $\psi$, and integrating once.  

Our first task is to make a change of coordinates to flatten the domain $\Omega$.  With that in mind, fix an even function $a \in C^\infty_\textup{c}(\R)$ supported in
$[-1,1]$ and with $a(0) = 1$.  We will work in the variables $(X,Y)$ defined
implicitly by
\begin{align*}
  x_1 = X,
  \qquad 
  x_2 = x_2(X,Y) = Y+\eta(X)a(Y).
\end{align*}
This is valid whenever $\n\eta_{L^\infty}
\n{a'}_{L^\infty} < 1$, and so it can be done for small-amplitude waves like those in $\mathscr{C}_{\mathrm{loc}}$ or $\mathscr{S}_{\mathrm{loc}}$. For finite-amplitude waves, we can simply extend the support of $a$, but this makes
the notation more cumbersome and so we will not pursue that here. It is easily seen that the mapping $(x_1, x_2) \mapsto (X,Y)$ sends the fluid domain $\Omega$ to the
half-space 
\[ R := \{(X,Y) \in \R^2 : Y < 0\}.\]
In particular, the image of the free surface $S$ is just $T := \{ (X,0) : X \in \R\}$.

It is tedious but elementary to show that \eqref{existence: psi Eulerian equation} is transformed to the following system:
\begin{subequations}\label{flattened}
  \begin{alignat}{2}
    \label{Laplace}
    \psi_{XX} + \psi_{YY} - f_1(\psi, \eta)&= 0  &\qquad& \textrm{in } R,\\
    \label{Dynamic}
    -c_1(\varpi)\psi_Y + g\eta - \sigma\eta_{XX} - f_2(\psi, \eta ,\varpi) &=   0&& \textrm{on } T,\\
    \label{Kinematic}
    \psi - c_1(\varpi)\eta + \psi_V(\eta,\varpi)&= 0 && \textrm{on } T,
  \end{alignat}
\end{subequations}
where the nonlinear terms in \eqref{flattened} are described by the mappings
\begin{align*}
  f_1(\psi,\eta) &:=
  -(a_Y \eta \psi_{YY} - a \eta_{XX} \psi_Y - a_{YY} \eta \psi_Y
  + 3 a_Y \eta \psi_{XX} - 2 a \eta_X \psi_{XY} )\\
  &\qquad- (a^2 \eta_X^2 \psi_{YY} - 2a a_Y \eta \eta_{XX} \psi_Y + 2 a a_Y
  \eta_X^2 \psi_Y + 3 a_Y^2 \eta^2 \psi_{XX} - 4 a a_Y \eta \eta_X
  \psi_{XY})\\
  &\qquad -\eta (a^2 a_Y \eta_X^2 \psi_{YY} -a a_Y^2 \eta \eta_{XX} \psi_Y -
  a^2 a_{YY} \eta_X^2 \psi_Y + 2a a_Y^2 \eta_X^2 \psi_Y \\
  & \qquad \qquad + a_Y^3 \eta^2
  \psi_{XX} - 2 a a_Y^2 \eta \eta_X \psi_{XY}),\\
  f_2(\psi,\eta,\varpi) &:= 
  -\tfrac 12 \left( V_1(\eta,\varpi)^2 - 2 \psi_Y V_1(\eta,\varpi) + V_2(\eta,\varpi)^2 + 2\psi_X V_2(\eta,\varpi)
  + \psi_Y^2 + \psi_X^2\right) 
  \\ & \qquad
  +\eta_X \psi_Y (V_2(\eta,\varpi) + \psi_X)
  - \tfrac 12 \eta_X^2 \psi_Y^2+ \sigma \left( (1+\eta_X^2)^{3/2} -1 \right) \eta_{XX}.
\end{align*}
We are abusing notation somewhat by writing 
\[ \psi_V(\eta,\varpi) := \psi_V(\varpi)(\cdot, \eta(\cdot)), \qquad V(\eta, \varpi) := V(\varpi)(\cdot, \eta(\cdot)).\]
It is also important to note that  $f_1(\psi, \eta)$ is supported in the slab $\{ -1 < Y < 0\}$ for any $(\psi, \eta)$.  In the next subsection, we will describe more precisely the domains and codomains of $f_1$ and $f_2$.

\subsection{Functional-analytic setting}

As mentioned above, we wish to reexamine the existence problem in weighted H\"older spaces introduced in Section \ref{notation section}.  Specifically, for the weight we will always take
\begin{align*}
  w = w(X,Y) := \frac{X^2+(1-Y)^2}{1-Y}.
\end{align*}
Note that $1/w$ is related to the Poisson kernel for a half-plane. It is easily confirmed that this weight is equivalent to $\jbracket x^2/\jbracket{x_2}$ in the original variables.

Observe that, since the vorticity has compact support, 
an argument much simpler than the proof of Lemma~\ref{V asymptotics lemma} shows that $\psi_V \in C_w^{\ell+\alpha}(T)$ for any $\ell \geq 0$. Another useful feature of these weighted spaces is that, if $f, g \in C^{\ell+\alpha}_w$, then $fg \in C^{\ell+\alpha}_{w^2}$.

Finally, it will be convenient in the coming analysis to have notation for slab subdomains of $R$.  For any $k > 0$, we denote by
\[ R_k := \left\{ (X,Y) \in \R^2 :  -k < Y < 0 \right\} \subset R\]
the strip of height $k$ having upper boundary $T$ and lower boundary $B_k := \{ Y =
-k\}$. 

With this notation in place, our objective is to find solutions $(\psi,\eta) \in C^{3+\alpha}_w(\overline R) \times C^{3+\alpha}_w(\R)$ of the flattened system \eqref{flattened} with $0 < |\varpi| \ll 1$.  
First, for each $\varpi \in \R$, $\eta \in C^{3+\alpha}_w(T)$, and $f
\in C^{1+\alpha}_w(\overline R)$ supported in $R_1$, we define
$\Psi = K(\eta,\varpi) f$ to be the unique bounded solution to the Dirichlet
problem
\begin{align*}
  \Delta\Psi = 0 \textup{~in~} R,
  \qquad 
  \psi = c_1(\varpi)\eta - \psi_V(\varpi) \textup{~on~} T.
\end{align*}
Then \eqref{flattened} can be written as the operator equation $\F(\psi,\eta,\varpi) = 0$ for $\mathcal{F} = (\mathcal{F}_1,\mathcal{F}_2)$ with
\begin{align}
  \label{F def}
  \begin{aligned}
    \F_1(\psi,\eta,\varpi) &:= 
    \psi - K(\eta,\varpi) f_1(\psi,\eta,\varpi),\\
    \F_2(\psi,\eta,\varpi) &:= 
    g\eta - \sigma \eta_{XX}  - c_1(\varpi)\psi_V(\eta,\varpi) - f_2(\psi,\eta,\varpi).
  \end{aligned}
\end{align}
For the time being, this must be understood in a purely formal sense. 

\subsection{Smoothness of \texorpdfstring{$\mathcal{F}$}{F}}

We begin by proving that $\F$ is a well-defined $C^1$ mapping from 
$C^{3+\alpha}_w(\overline R) \times C^{3+\alpha}_w(T) \times \R \to C^{3+\alpha}_w(\overline R)
\times C^{1+\alpha}_w(T)$. Inspecting the form of $f_1$, we see that the
following lemma is sufficient.
\begin{lemma}[Smoothness]\label{Ksmooth}
  For any function $b \in C^\infty(\R)$ with support in $[-1,0]$, the
  mapping
  \begin{align*}
    K_b \maps C^{1+\alpha}_w(\overline R) \times C^{3+\alpha}_w(T) \times \R 
    \to
    C^{3+\alpha}_w(\overline R),
    \qquad K_b(f,\eta,\varpi) 
    :=
    K(\eta,\varpi)(bf) 
  \end{align*}
  is of class $C^1$.  
\end{lemma}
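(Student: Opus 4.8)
The plan is to reduce the $C^1$-smoothness of $K_b$ to two ingredients: linearity of $K(\eta,\varpi)$ in the inhomogeneity $bf$, and smooth dependence of the solution operator on the boundary data and on $(\eta,\varpi)$ through the right-hand side's truncation to the slab $R_1$. First I would fix the key a priori estimate: for $f \in C^{1+\alpha}_w(\overline R)$ supported in $R_1$ and for boundary data $h \in C^{3+\alpha}_w(T)$, the unique bounded solution $\Psi$ of $\Delta\Psi = bf$ in $R$, $\Psi = h$ on $T$, satisfies $\n\Psi_{C^{3+\alpha}_w(\overline R)} \lesssim \n f_{C^{1+\alpha}_w(\overline R)} + \n h_{C^{3+\alpha}_w(T)}$. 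This is exactly the kind of weighted Schauder estimate on the half-space with the Poisson-kernel weight $w$ that the paper attributes to Craig--Sternberg, Amick, Sun, and Hur; I would cite that and apply it. Since $bf$ is already compactly supported in $Y$, no further cutoff is needed, and the estimate is uniform in $\varpi$ in a bounded range.

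Next, observe that $K_b(f,\eta,\varpi) = K(\eta,\varpi)(bf)$ decomposes as $K(\eta,\varpi)(bf) = \Psi_0(f) + \Psi_1(\eta,\varpi)$, where $\Psi_0(f)$ solves $\Delta\Psi_0 = bf$ in $R$ with zero boundary data, and $\Psi_1(\eta,\varpi)$ is the bounded harmonic extension of the data $c_1(\varpi)\eta - \psi_V(\varpi)$ on $T$. The map $f \mapsto \Psi_0(f)$ is \emph{linear}, and bounded $C^{1+\alpha}_w(\overline R) \to C^{3+\alpha}_w(\overline R)$ by the estimate above; a bounded linear map is automatically $C^\infty$. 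For $\Psi_1$, the dependence on $\varpi$ enters only through $c_1(\varpi)$, which is a smooth (indeed real-analytic, being a branch of the Sobolev-space solution curve) scalar function of $\varpi$, and through $\psi_V(\varpi)$; since the vorticity has compact support, the remark in the excerpt gives $\psi_V(\varpi) \in C^{\ell+\alpha}_w(T)$ for every $\ell$, and one checks from the explicit Newtonian-potential formula \eqref{n-d V def} that $\varpi \mapsto \psi_V(\varpi)$ is smooth into $C^{3+\alpha}_w(T)$ (for the point-vortex family it is literally linear in $\varpi$; for the vortex patch one differentiates under the integral, using that $D(\varpi)$ stays in a fixed compact set). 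The map $\eta \mapsto c_1(\varpi)\eta$ is bounded linear in $\eta$. Composing with the bounded linear Poisson extension operator $C^{3+\alpha}_w(T) \to C^{3+\alpha}_w(\overline R)$, we get that $(\eta,\varpi) \mapsto \Psi_1(\eta,\varpi)$ is $C^1$ (in fact $C^\infty$). Adding the two pieces, $K_b$ is $C^1$.

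The one point requiring genuine care — and the step I expect to be the main obstacle — is verifying that the weighted Schauder estimate holds with precisely this weight $w = (X^2 + (1-Y)^2)/(1-Y)$ and that the resulting $\Psi$ genuinely lies in $C^{3+\alpha}_w(\overline R)$ rather than merely in a weaker weighted space: the weight degenerates like $|x|^2/|x_2|$, so it blows up along the vertical axis, and one must confirm that the Poisson integral of data decaying like $w^{-1}$ produces a harmonic function decaying like $w^{-1}$ together with the right decay for its derivatives. This is a statement about the mapping properties of the half-space Poisson kernel against the specific weight, and I would isolate it as a preliminary lemma (or invoke it from the cited literature), since once it is in hand the rest is the routine bookkeeping sketched above. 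The compact support of $b$ is what makes the inhomogeneous part harmless: it confines $bf$ to $R_1$, where $w$ is bounded above and below, so no competition between the weight's growth and the source term arises.
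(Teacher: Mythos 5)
Your proposal is correct and follows essentially the same route as the paper: the same decomposition into a linear source-term solve with zero boundary data plus a bounded harmonic extension of the data $c_1(\varpi)\eta - \psi_V(\eta,\varpi)$, with smoothness then following from boundedness of the linear pieces and smoothness of the boundary-data map. The one ingredient you defer to the literature --- the weighted a priori estimate in $C^{3+\alpha}_w$ on the half-space --- is exactly what the paper supplies via Lemmas~\ref{sunlemma} and~\ref{simplegreens}, patching the Craig--Sternberg slab estimate near $T$ together with Sun's estimate on the half-space away from $T$.
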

The proof of Lemma~\ref{Ksmooth} relies on the following two lemmas on the decay properties of solutions to Dirichlet problems on the halfspace $R$.
\begin{lemma}\label{sunlemma}
  Let $h \in C^0_w(T)$. Then for any $k > 0$ the unique bounded
  solution $\Psi$ to 
  \begin{align*}
    \Delta \Psi = 0 \textup{~in~} R,
    \qquad 
    \Psi = h \textup{~on~} T
  \end{align*}
  obeys the estimate
  \begin{align*}
    \| \Psi \|_{C^4_w(R \setminus R_k)}
    \le C_k \| h \|_{C^0_w(T)}.
  \end{align*}
  \begin{proof}
    This follows from \cite[Lemma~3.1]{sun1997analytical}. Note that there is a minor misprint in the statement of that lemma:  $(1\pm \psi)^{\abs\beta-1}$ should be $\abs\psi^{\abs\beta-1}$. 
      \end{proof}
\end{lemma}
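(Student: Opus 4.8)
The plan is to establish the interior and boundary-slab decay estimate for bounded harmonic functions on the half-space $R$ with data in the weighted space $C^0_w(T)$. The key point is that the weight $1/w(X,Y) = (1-Y)/(X^2 + (1-Y)^2)$ is, up to a constant, exactly the Poisson kernel for the half-plane evaluated at the "source point" $(0,1)$ reflected below; more precisely the Poisson kernel is $P(X,Y) = \frac{-Y/\pi}{X^2 + Y^2}$, so $1/w$ behaves like $P$ with the vertical argument shifted by $1$. This means that if $|h(X)| \le M/w(X,0) = M \cdot \frac{1}{X^2+1}$ on $T$, then the Poisson integral representation
\begin{align*}
  \Psi(X,Y) = \int_{\R} P(X-X', Y)\, h(X')\, dX'
\end{align*}
converges and can be estimated by a convolution of two Poisson-type kernels, which reproduces a bound of the same form. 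The first step is therefore to recall (or quote directly from \cite[Lemma~3.1]{sun1997analytical}) the pointwise estimate $|\Psi(X,Y)| \le C M / w(X,Y)$ for all $(X,Y) \in R$.

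The second step is to upgrade this $C^0_w$ bound to a $C^4_w$ bound on the region $R \setminus R_k$, where $Y \le -k < 0$ stays away from the boundary. Here I would use interior elliptic regularity: since $\Psi$ is harmonic, for any point $(X_0,Y_0)$ with $Y_0 \le -k$ the ball $B_{k/2}(X_0,Y_0)$ lies inside $R$, and standard interior estimates give $|\partial^\beta \Psi(X_0,Y_0)| \le C_k \sup_{B_{k/2}(X_0,Y_0)} |\Psi|$ for $|\beta| \le 4$. Because the weight $w$ is comparable at all points of $B_{k/2}(X_0,Y_0)$ — it varies by a bounded factor over a ball of fixed radius once we are a distance $k$ below $T$ — the pointwise bound from step one converts directly into $|w(X_0,Y_0)\, \partial^\beta \Psi(X_0,Y_0)| \le C_k M$. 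Combining over all $|\beta|\le 4$ and estimating the H\"older seminorm of $\partial^\beta\Psi$ for $|\beta|=4$ by the $C^0$ bound on $\partial^\beta \Psi$ for one more derivative (again via interior estimates, losing only constants) yields $\|\Psi\|_{C^4_w(R\setminus R_k)} \le C_k \|h\|_{C^0_w(T)}$, as claimed.

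As the excerpt already indicates, the intended proof simply cites \cite[Lemma~3.1]{sun1997analytical} with the noted correction to the exponent, so the "obstacle" is minimal — the real content is Sun's weighted Poisson estimate, and the only thing to verify carefully is that the weight $w$ here matches (up to equivalence) the weight used there and that the interior-regularity upgrade from $C^0_w$ to $C^4_w$ on a region bounded away from $T$ is routine. If one wanted a self-contained argument, the main technical step would be the convolution estimate $\int_{\R} \frac{|Y|}{(X-X')^2+Y^2} \cdot \frac{1}{X'^2+1}\, dX' \le \frac{C}{X^2 + (1-Y)^2}\cdot(1-Y)$, i.e. that the class of functions dominated by shifted Poisson kernels is preserved under the Poisson extension; this is an elementary but slightly fiddly computation that one would carry out by splitting the integral according to whether $|X'|$ is comparable to $|X|$ or not. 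I would present the proof in the short form, quoting Sun, and only sketch the interior-regularity upgrade.
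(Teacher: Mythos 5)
Your proposal is correct, but it does more than the paper, whose entire proof is the citation of Sun's Lemma~3.1 (with a correction to a misprint); you have essentially reconstructed the content of that cited lemma together with the routine interior upgrade. Both ingredients you identify are the right ones: the pointwise bound $|\Psi| \le C\|h\|_{C^0_w}/w$ throughout $R$, and then interior derivative estimates for harmonic functions on balls $B_{k/2}(X_0,Y_0)$, $Y_0\le -k$, combined with the observation that $w$ varies by at most a $k$-dependent factor over such balls. One simplification worth noting: the "slightly fiddly" convolution estimate you defer is in fact an exact identity. Since $1/w(\cdot,Y)$ is $\pi$ times the half-plane Poisson kernel at height $Y-1$, the semigroup (Chapman--Kolmogorov) property $P(\cdot,Y)*P(\cdot,-1)=P(\cdot,Y-1)$ gives
\begin{align*}
  \int_{\R} P(X-X',Y)\,\frac{dX'}{w(X',0)} = \frac{1}{w(X,Y)}
\end{align*}
with no case-splitting needed, so the $C^0_w$ bound on $\Psi$ is immediate from the Poisson representation of the unique bounded solution. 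With that, your second step (interior regularity plus comparability of the weight) closes the argument; the $C^4$ norm here carries no H\"older seminorm, so the extra derivative you invoke for the seminorm is not even required. In short: the paper buys brevity by outsourcing to Sun, while your version buys self-containedness at the cost of a page, and the two are mathematically the same estimate.
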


\begin{lemma}\label{simplegreens}
  Let $f \in C^{1+\alpha}_w(\overline R)$ with support contained in $R_1$. 
  Then for any $k > 1$ the unique bounded solution $\Psi$ to 
  \begin{align*}
    \Delta \Psi = f \textup{~in~} R,
    \qquad 
    \Psi = 0 \textup{~on~} T
  \end{align*}
  satisfies
  \begin{align*}
    \| \Psi \|_{C^{3+\alpha}_w(R_k)}
    \le C_k \| f \|_{C^{1+\alpha}_w(R)}.
  \end{align*}
\end{lemma}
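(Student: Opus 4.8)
\textbf{Plan of proof for Lemma~\ref{simplegreens}.}

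The goal is a weighted Schauder-type estimate for the Dirichlet problem $\Delta\Psi = f$ in $R$, $\Psi = 0$ on $T$, with $f$ supported in the unit slab $R_1$. The overall plan is to first produce $\Psi$ by an explicit Green's function representation, then decompose the estimate into a ``near-field'' part (on a bounded piece of the slab, say $R_2$) handled by classical interior and boundary Schauder theory, and a ``far-field'' part (on $R_k \setminus R_2$) handled by direct kernel estimates exploiting the compact support of $f$ together with the structure of the weight $w$. Since $1/w$ is comparable to the Poisson kernel of the half-plane, the key mechanism is that a source located near the boundary within $R_1$ generates, at a far point $(X,Y)$, a potential that decays like the Green's function evaluated between a near-boundary point and $(X,Y)$, and this matches the $1/w$ weight.

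First I would write $\Psi(x) = \int_R G(x,y) f(y)\,dy$, where $G$ is the Dirichlet Green's function for the half-plane $R$, obtained by the method of images: $G(x,y) = \frac{1}{2\pi}\big(\log|x-y| - \log|x-\bar y|\big)$ with $\bar y$ the reflection of $y$ across $T$. Since $f \in C^{1+\alpha}_w(\overline R) \subset C^{1+\alpha}_{\rm bdd}(\overline R)$ (the weight $w \ge 1$) with compact support in the $Y$-direction and integrable decay in $X$, this integral converges and defines the unique bounded solution; boundedness and the vanishing trace are immediate from $G$. For the near-field: on $R_2$, classical Schauder estimates (interior estimates away from $T$, and boundary Schauder estimates up to $T$ where $\Psi$ has zero Dirichlet data) give $\|\Psi\|_{C^{3+\alpha}(R_2)} \le C\big(\|f\|_{C^{1+\alpha}(R_1)} + \|\Psi\|_{C^0(R_3)}\big)$, and $\|\Psi\|_{C^0(R_3)}$ is bounded by $\|f\|_{C^0_w(R)}$ directly from the kernel bound $\int_{R_1} |G(x,y)|\, \langle y\rangle^{-1-\delta}\,dy < \infty$ uniformly for $x \in R_3$ (a short computation using $|\log|x-y|-\log|x-\bar y|| \lesssim \log(1+1/|x-y|) + |x-y|^{-1}\cdot(\text{dist to }T)$ near the singularity, all integrable against the weight). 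Since $w$ is bounded above and below on $R_2$, this controls $\|\Psi\|_{C^{3+\alpha}_w(R_2)}$.

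For the far-field estimate on $R_k \setminus R_2$, I would differentiate under the integral sign: for $|\beta|\le 3$, $\partial^\beta\Psi(x) = \int_{R_1} \partial_x^\beta G(x,y) f(y)\,dy$, and the $C^\alpha$ seminorm of the top-order derivatives is handled by differencing the kernel. The point is to show $|w(x)\,\partial^\beta\Psi(x)| \lesssim \|f\|_{C^{1+\alpha}_w(R)}$ for $x\in R_k\setminus R_2$, $|\beta|\le 3$, and similarly for the weighted Hölder seminorm. For $y \in R_1$ and $x$ far, $|\partial_x^\beta G(x,y)| \lesssim \operatorname{dist}(y,T)\,|x-y|^{-1-|\beta|} \lesssim |x|^{-1-|\beta|}$ for $|\beta|\ge 1$ (the dipole-type cancellation between $y$ and $\bar y$ gains one power of the distance to the boundary), and for $|\beta|=0$ one gets $|G(x,y)| \lesssim \operatorname{dist}(y,T)/|x| \lesssim 1/|x|$; since $w(x) \sim |x|^2/(1+|Y|)$ and $|Y|<k$, we get $w(x)|\partial^\beta\Psi(x)| \lesssim |x|^{2}\cdot|x|^{-1-|\beta|}\int_{R_1}|f| \lesssim |x|^{1-|\beta|}\|f\|_{C^0_w}$; this is bounded (indeed decaying) for $|\beta|\ge 1$, and for $|\beta|=0$ one uses that $\int_{R_1} |f(y)|\,dy \lesssim \|f\|_{C^0_w(R)}$ with an extra decay $\int_{R_1}\langle y\rangle^{-1-\delta} \cdot |x-y|^{-1}$-type gain — more carefully, splitting $|y'|\le |x|/2$ and $|y'|>|x|/2$ — to absorb the remaining power of $|x|$. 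The $C^\alpha$ seminorm of the third derivatives is controlled the same way after estimating $|\partial_x^\beta G(x+z,y) - \partial_x^\beta G(x,y)| \lesssim |z|^\alpha \operatorname{dist}(y,T)\,|x-y|^{-1-|\beta|-\alpha}$ for $|z|<1 < \tfrac12|x-y|$.

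\textbf{Main obstacle.} The delicate point is the $|\beta|=0$ far-field bound: naively $w(x)|G(x,y)| \lesssim |x|^2 \cdot |x|^{-1} = |x|$, which is \emph{not} bounded, so one genuinely needs the extra decay coming from the $X$-decay built into $C^{1+\alpha}_w$ (equivalently, $\|f\|_{C^0_w(R)}$ controls $\int |f(y)|\,(1+|y'|)\,dy$ up to a log, by Remark~\ref{L1 remark}-type reasoning, since $w \gtrsim \langle y'\rangle^2$ on $R_1$), combined with the kernel decay $|G(x,y)| \lesssim \operatorname{dist}(y,T)\langle y'\rangle/|x|^2$ for $|y'| \ll |x|$ (a second-order image cancellation in the far variable). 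Keeping the bookkeeping of these two competing decays consistent — and matching the exact power $2$ in the weight $w$ — is where the computation must be done carefully; everything else is standard elliptic regularity and dominated convergence. Once the weighted pointwise and seminorm bounds are established on $R_k\setminus R_2$ and combined with the near-field Schauder estimate on $R_2$, adding them gives $\|\Psi\|_{C^{3+\alpha}_w(R_k)} \le C_k\|f\|_{C^{1+\alpha}_w(R)}$, as claimed.
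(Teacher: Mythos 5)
Your overall architecture (Green's function representation, a near/far splitting, weighted kernel estimates in the far field) is reasonable, but one step fails, and it sits at exactly the hard part of the lemma. You assert that $w$ is ``bounded above and below on $R_2$'' so that the classical unweighted Schauder estimate on $R_2=\{-2<Y<0\}$ controls the weighted norm there. This is false: $R_2$ is an infinite strip, and $w(X,Y)=\bigl(X^2+(1-Y)^2\bigr)/(1-Y)\sim X^2$ on it, so $w\to\infty$ as $\abs X\to\infty$ inside $R_2$. The region $\{\abs X \text{ large},\ -2<Y<0\}$ is precisely where the weight is largest and the estimate most delicate, and your plan covers it only with an unweighted bound. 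To repair this you would need to run local Schauder estimates on unit balls $B_1(x_0)$, use that $w$ is slowly varying to freeze the weight, and reduce everything to a weighted $C^0$ bound $\sup w\abs\Psi$ on the strip --- in other words, you would essentially be reproving the weighted Schauder estimate for the Poisson equation in a slab. That estimate is exactly \cite[Lemma~5.2]{craig1988symmetry}, quoted as \eqref{eqn:dirstrip}; the paper's proof uses it as a black box, first estimating $\Psi$ in $C^{3+\alpha}_w$ on the single line $B_k$ (where, after writing $f=\partial_Z F$ and integrating by parts, the kernel $\partial_X^j G_Z$ is smooth and decays like $(\abs{W-X}^2+1)^{-1}$), and then letting the slab lemma propagate the weighted bound from $T\cup B_k$ into all of $R_k$. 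Without this ingredient, or an equivalent, your argument does not close.

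A secondary remark: the ``main obstacle'' you flag for $\abs\beta=0$ largely dissolves once you use the sharp two-sided bound for the half-plane Green's function, $\abs{G(x,y)}\le d(x)\,d(y)/\bigl(\pi\abs{x-y}^2\bigr)$ with $d$ the distance to $T$; this follows from $\abs{x-y}^2=\abs{x-\bar y}^2-4YZ$ and $\abs{\log(1-t)}\le t/(1-t)$. Since $d(x)\le k$ on $R_k$ and $d(y)\le 1$ on $R_1$, one gets $\abs{G(x,y)}\le C_k\abs{x-y}^{-2}$ directly, hence $w(x)\abs{G(x,y)}\lesssim 1$ for $\abs W\le\abs X/2$, with the complementary region handled by the decay of $f$ encoded in $\n f_{C^0_w}$. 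Your bound $\abs G\lesssim d(y)/\abs x$ uses only the cancellation coming from the source's proximity to $T$ and misses the equally important gain from $x$ lying at bounded depth in $R_k$; the extra factor $\jbracket{y'}$ in your proposed ``second-order cancellation'' is neither correct nor needed.
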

\begin{proof}
  Defining $F \in C_w^{2+\alpha}(\overline{R})$ by
  \[ F(X,Y) := \int_{-1}^Y f(X,Z) \, dZ, \]
  we have that $F$ is supported in $R_1$ and satisfies $\partial_Y F = f$.  

  Now, we may express $\Psi$ in terms of $f$ via the Poisson kernel representation formula
  \begin{align*}
    \Psi(X,Y) &= \int_R f(W,Z) G(X,Y,W,Z) \, dW \, dZ,
  \end{align*}
  where 
  \[ G(X,Y,W,Z) :=  - \frac 1{4\pi} \left(\log{\left(|W-X|^2 + |Z-Y|^2\right)} - \log{\left(|W-X|^2+|Z+Y|^2\right)} \right).\]
  Integrating this by parts once in $Z$ yields 
  \[ \Psi(X,Y) = - \int_{R_1} F(W,Z) G_Z(X,Y,W,Z) \, dW \, dZ. \]
  Note that the integration domain is $R_1$ since this is where $F$ is supported.  We easily check that 
  \begin{align*}
    \partial_X^j G_Z,\ 
    [\partial_X^j G_Z]_\alpha
    \le \frac {C_k}{\abs{W-X}^2+1}
    \qquad 
    \text{for $(W,Z) \in R_1$, $Y=-k$, and $j \le 3$},
  \end{align*}
 which is enough to prove that 
  \begin{align*}
    \n\Psi_{C^{3+\alpha}_w(B_k)} \le C_k \n F_{C^0(R_1)}
    \le C_k \n f_{C^0(R_1)}.
  \end{align*}
  We conclude by applying \cite[Lemma~5.2]{craig1988symmetry}, which gives the following weighted estimate for the Poisson equation in a slab:
  \begin{align}
    \label{eqn:dirstrip}
    \n \Psi_{C^{3+\alpha}_w(R_k)}  &\le 
    C_k\left( \n\Psi_{C^{3+\alpha}_w(T)} 
    + \n\Psi_{C^{3+\alpha}_w(B_k)} 
    + \n{\Delta \Psi}_{C^{1+\alpha}_w(R_k)}
    \right)\\
    \notag
    &\le C_k \n f_{C^{1+\alpha}_w(R_1)}.
    \qedhere
  \end{align}
\end{proof}

\begin{proof}[Proof of Lemma~\ref{Ksmooth}.]
  For $f \in C^{1+\alpha}_w(\overline R)$ and $\eta \in C^{3+\alpha}_w(T)$, we know that $K_b(f,\eta,\varpi)$ exists as an element of $C^{3+\alpha}(\overline R)$. It remains to show that it is also in $C^{3+\alpha}_w(\overline R)$ and that it depends smoothly on its arguments. 
  
  Towards that end, we decompose 
  \begin{align*}
    K_b(f,\eta,\varpi) = K_{b,1} f  + K_{b,2}(\eta,\varpi),
  \end{align*}
  where $K_{b,1} f := \Psi_1$ is the unique bounded $C^{3+\alpha}(\overline R)$ solution to 
  \begin{align*}
    \Delta \Psi_1 = bf \textup{~in~} R,
    \qquad \Psi_1 = 0 \textup{~on~} T,
  \end{align*}
  and $K_{b,2}(\eta,\varpi) := \Psi_2$  is the unique
  bounded $C^{3+\alpha}(\overline R)$ solution to
  \begin{align*}
    \Delta \Psi_2  = 0 \textup{~in~} R,
    \qquad \Psi_2 = c_1(\varpi)\eta - \psi_V(\eta,\varpi) \textup{~on~} T.
  \end{align*}

  Applying Lemma~\ref{simplegreens} to $\Psi_1$ yields the estimate
  \begin{align*}
    \| \Psi_1 \|_{C^{3+\alpha}_w(R_4)} 
    \le C \n{bf}_{C^{1+\alpha}_w(R)}
    \le C \n{f}_{C^{1+\alpha}_w(R)}.
  \end{align*}
  In particular, this gives us control of $\Psi_1$ restricted to the line $B_2 \subset R_4$, and hence Lemma~\ref{sunlemma} can be used to bound $\Psi_1$ on the half-space $R\setminus R_3$:
  \begin{align*}
    \n{\Psi_1}_{C^4_w(R \setminus R_3)}
    \le C\n{\Psi_1}_{C^0_w(B_2)}
    \le C\n{\Psi_1}_{C^{3+\alpha}_w(R_4)}
    \le C\n f_{C^{1+\alpha}_w(R)}.
  \end{align*}
  Combining these two estimates yields
  \begin{align*}
    \n{\Psi_1}_{C^{3+\alpha}_w(R)} \le C \n f_{C^{1+\alpha}_w(R)}. 
  \end{align*}

  On the other hand, we obtain from Lemma~\ref{sunlemma} the following bound for $\Psi_2$: 
  \begin{align*} 
    \n{\Psi_2}_{C^4_w(R \setminus R_1)} \le C \left(\n{\eta}_{C^0_w(T)} + \n{\psi_V(\eta,\varpi)}_{C^0_w(T)}\right).
  \end{align*}
  Thus $\Psi_2$ is controlled in a half-space positively separated from $T$. To control it on the remainder of $R$, we again use the estimate \eqref{eqn:dirstrip} from \cite[Lemma~5.2]{craig1988symmetry} to obtain
  \begin{align*}
    \n {\Psi_2}_{C^{3+\alpha}_w(R_2)}  &\le 
    C\left( \n{\Psi_2}_{C^{3+\alpha}_w(T)} + \n{\Psi_2}_{C^{3+\alpha}_w(B_2)} \right)\\  
    &\le
    C\left( \n{\eta}_{C^{3+\alpha}_w(T)} + \n{\psi_V(\eta,\varpi)}_{C^{3+\alpha}_w(T)} + \n{\Psi_2}_{C^{3+\alpha}_w(B_2)} \right).  
  \end{align*}
  As $B_2 \subset R\setminus R_1$, the preceding two bounds can be combined to show
  \begin{align*}
    \n{\Psi_2}_{C^{3+\alpha}_w(R)} \le C\left( \n{\eta}_{C^{3+\alpha}_w(T)} + \n{\psi_V(\eta,\varpi)}_{C^{3+\alpha}_w(T)} \right).
  \end{align*}
  Thus $K_b$ is indeed well-defined as a
  mapping $C^{1+\alpha}_w(\overline R) \times C^{3+\alpha}_w(T) \times \R \to
  C^{3+\alpha}_w(\overline R)$. Since $K_{b,1}$ is linear, the above
  estimates show that it is also smooth. Similarly $K_{b,2}$ is the composition
  of a linear map and the smooth map $C_w^{3+\alpha}(T) \times \R \to C^{3+\alpha}_w(T)$ given by $(\eta,\varpi) \mapsto c_1(\varpi)\eta - \psi_V(\eta,\varpi)$.
\end{proof}

\subsection{Improved decay for \texorpdfstring{$\eta_X$}{eta\_X}}

So far we have been working with $\eta \in C^{3+\alpha}_w(T)$, which only implies $\eta,\eta_X = O(1/X^2)$. In this subsection we show that $\eta_X = O(1/\abs X^{2+\varepsilon})$ so that \eqref{eta decay} holds. The proof will rely on the fact that $c = O(\varpi)$ implies that $\sigma > |c|^2/4g$ for $0 < \abs\varpi \ll 1$. 

\begin{lemma} \label{improved derivative decay lemma} 
Suppose that there exists a two-dimensional capillary-gravity solitary wave with localized vorticity and $\sigma > \abs c^2/4g$ such that the velocity field has the decomposition
 \[ u = \nabla^\perp \psi + V, \qquad \psi \in C_w^{3+\alpha}(\Omega),\]
 and the free surface $\eta \in C_w^{3+\alpha}(\R)$. Then the derivatives of the free surface profile $\eta$ enjoy the improved decay 
 \begin{equation}
   \partial_{x^\prime} \eta = O\left( \frac{1}{|x^\prime|^{2+\varepsilon} }\right), \qquad \textrm{as } |x^\prime| \to \infty.
   \label{higher derivatives better decay eta} 
 \end{equation}
\end{lemma}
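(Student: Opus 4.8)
The plan is to collapse the flattened system \eqref{flattened} into a single scalar equation for $\eta$ on $\R$ and then invert it, exploiting that the obstruction to faster decay lives only at zero frequency and disappears after one differentiation. First I would combine the boundary conditions with (near-)harmonicity of $\psi$: the Laplace equation \eqref{Laplace} reads $\Delta\psi=f_1(\psi,\eta)$ with $f_1$ supported in $R_1$ and quadratically small, so by Lemma~\ref{simplegreens} the piece of $\psi$ it generates lies in $C^{3+\alpha}_{w^2}(\overline R)$, i.e.\ is $O(|X|^{-4})$ together with its derivatives; modulo this, $\psi$ is harmonic in $R$ and $\psi_Y|_T=|D|(\psi|_T)$, where $|D|:=\sqrt{-\partial_X^2}$ is the Dirichlet--Neumann operator of $R$. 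Substituting $\psi|_T=c_1\eta-\psi_V|_T$ from \eqref{Kinematic} into \eqref{Dynamic} yields
\[
 \mathcal{L}\eta=\mathcal{R},\qquad \mathcal{L}:=g-\sigma\partial_X^2-c_1^2\,|D|,\qquad \mathcal{R}:=f_2(\psi,\eta,\varpi)|_T-c_1\,|D|\bigl(\psi_V(\eta,\varpi)|_T\bigr)+O(|X|^{-4}).
\]
The source $\mathcal{R}$ is harmless: since $f_2$ is quadratic in quantities in $C^{2+\alpha}_w$, the product rule $C_w\cdot C_w\subset C_{w^2}$ gives that $f_2|_T$ and its first two $X$-derivatives are $O(|X|^{-4})$; and $\psi_V$, built from the explicit, algebraically decaying vortical stream function, is ``nice'' in that each $X$-derivative gains a power of decay, so that $|D|\psi_V|_T=O(|X|^{-2})$ but $\partial_X|D|\psi_V|_T=O(|X|^{-3})$ (the extra gain coming from vanishing of the relevant boundary moments). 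Thus $\mathcal{R}=O(|X|^{-2})$ while $\partial_X\mathcal{R}=O(|X|^{-3})$.

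Next I would invert $\mathcal{L}$. Since $\sigma>|c|^2/4g$, the Fourier symbol $\ell(k)=g+\sigma k^2-c_1^2|k|$ is bounded below by a positive constant (indeed $\ell(k)\ge g-c_1^4/(4\sigma)>0$ once $|c|$ is small, as is the case here) and grows like $\sigma k^2$, so $\eta=\ell(D)^{-1}\mathcal{R}$ and $\widehat{\partial_X\eta}(k)=ik\,\ell(k)^{-1}\widehat{\mathcal{R}}(k)$. The point is that $ik/\ell(k)$ is smooth and $O(|k|^{-1})$ away from $k=0$, while near $k=0$ it equals $ik/g+(ic_1^2/g^2)\,k|k|+O(|k|^3)$; the only nonsmooth term at the origin is $\propto k|k|$, whose inverse Fourier transform decays like $|X|^{-3}$. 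Convolving with the $O(|X|^{-2})$, ``nice'' function $\mathcal{R}$ then gives $\partial_X\eta=g^{-1}\partial_X\mathcal{R}+O(|X|^{-3})=O(|X|^{-3})$, in particular $O(|X|^{-2-\varepsilon})$ --- entirely consistent with $\eta$ itself being only $O(|X|^{-2})$, since that slower rate comes precisely from the nonsmoothness of $\ell(k)^{-1}$ at $k=0$ \emph{before} differentiation. To make this rigorous I would, rather than manipulating Fourier transforms, feed it through the weighted half-plane estimates of Craig--Sternberg \cite[Lemma~5.2]{craig1988symmetry} and Sun \cite[Lemma~3.1]{sun1997analytical} --- the tools already used in Lemmas~\ref{sunlemma} and \ref{simplegreens} --- applied to the harmonic function $\partial_X\psi$ with a heavier weight $\tilde w\sim\jbracket X^{2+\varepsilon}$ near infinity, using the differentiated dynamic condition to tie $\partial_X\psi_Y|_T$ to $\partial_X\eta$ together with the already-known $O(|X|^{-2})$ bounds on $\eta$, $\mathcal{R}$ and their derivatives.

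The hardest part is this last rigorous step: $\mathcal{L}$ is nonlocal and its symbol fails to be $C^2$ at $k=0$ --- which is exactly what keeps $\eta$ from decaying faster than $|X|^{-2}$ --- so one must carefully follow how differentiation interacts with that origin singularity and check that the one-power gain is not reabsorbed by the nonlocal terms. Smallness of $c$ is the handle: it lets one treat $c_1^2|D|\eta$ and the quadratic $f_2$ as a controlled right-hand side for the genuinely local operator $g-\sigma\partial_X^2$, whose Green's function decays exponentially, and then bootstrap $\partial_X\eta$ from the a priori $O(|X|^{-2})$ up to $O(|X|^{-2-\varepsilon})$. Finally one should verify --- routine but necessary --- that the $f_1$-correction to $\psi$ and the surface-tension nonlinearity inside $f_2$ really are quadratically small and concentrated near $T$, so that they enter only at the $O(|X|^{-4})$ level and do not spoil the gain.
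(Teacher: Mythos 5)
Your overall strategy is the paper's: reduce to the scalar pseudodifferential equation with symbol $g+\sigma k^2-\mathrm{const}\cdot\abs k$ (positive since $\sigma>\abs c^2/4g$), subtract the exponentially localized Green's function $G_2$ of $g-\sigma\partial_X^2$, and exploit that the leftover kernel $G_1$ --- whose only non-smoothness on the Fourier side sits at $k=0$, where the symbol behaves like $\abs k$ --- satisfies $G_1=O(1/\abs X^2)$ but $G_1'=O(1/\abs X^3)$. That is exactly the mechanism in the paper's proof, which works with $\eta=G_1*f_2+G_2*f_2$ and estimates $\langle X\rangle^{2+\varepsilon}G_1'*f_2$ directly.

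The step that does not close as written is the final convolution estimate. You assert that convolving the $O(1/\abs X^3)$ kernel with the ``$O(1/\abs X^2)$, nice'' source $\mathcal R$ yields $O(1/\abs X^3)$. It does not, in general: the contribution from the near-diagonal region (kernel $O(1)$ and integrable, source evaluated near $X$) only returns the source's own $O(1/\abs X^2)$ decay, and moving the derivative onto $\mathcal R$ trades this for the far-field loss of $G_1=O(1/\abs X^2)$; rescuing the rate requires either an explicit cancellation argument using $\int \partial_X\mathcal R\,dX=0$ and a first-moment bound, or --- as in the paper --- the fact that the source actually decays like $1/\abs X^4$ (the paper convolves $G_1'$ with $f_2\in C^0_{w^2}$ and extracts only $O(1/\abs X^{2+\varepsilon})$ with $\varepsilon<1/9$ via a weighted H\"older/Young inequality, not $O(1/\abs X^3)$). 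For the quadratic $f_2$ piece your argument therefore survives once the convolution lemma is stated correctly, but the term $c_1\abs D(\psi_V|_T)$ that you place in $\mathcal R$ and estimate only as $O(1/\abs X^2)$ would, at that rate and without the cancellation argument, cap $\partial_X\eta$ at $O(1/\abs X^2)$ and destroy the gain; the claimed extra decay of its derivative ``from vanishing of the relevant boundary moments'' is exactly the point that needs proof, since $\abs D$ does not in general propagate algebraic decay. Likewise, placing the $f_1$-correction at the $O(1/\abs X^4)$ level needs a $w^2$-weighted version of Lemma~\ref{simplegreens}, not the $w$-weighted one stated.
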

\begin{proof}
Inverting equation \eqref{Dynamic} we know that 
\begin{equation*}
\eta = G \ast f_2,
\end{equation*}
where $G = G(X)$ has the Fourier transform
\begin{equation*}
\widehat{G}(k) = \frac{1}{\sigma k^2 - |c||k| + g}.
\end{equation*}
We subtract off from $G$ the Green's kernel for $g - \sigma \partial^2_X$, and write 
\begin{equation*}
\eta = G_1 \ast f_2 + (g - \sigma \partial^2_X)^{-1} f_2,
\end{equation*}
where
\begin{align*}
(g - \sigma \partial^2_X)^{-1} f_2 & = {1\over 2\sqrt{\sigma g}} e^{-\sqrt{g/\sigma} |\cdot|} \ast f_2 =: G_2 * f_2, \\
\widehat{G}_1(k) & = {|c||k| \over (\sigma k^2 - |c||k| + g) (\sigma k^2 + g)}.
\end{align*}

Since convolution with $G_2$ and $G_2'$ preserves the algebraic decay of any function it acts on, we know that 
\begin{equation}\label{decay 1}
   G_2 * f_2 ,
  G_2' * f_2 
  \in
  C_{w^2}^{0} (\mathbb R).
\end{equation}
Examining the form of $G_1$, we find
\begin{equation*}
\left\{\begin{array}{llll}
G_1(X) = O(1), \ \ & G'_1(X) = O(1)   \quad &  \text{for $X$ small}, \\
G_1(X) = O(1/|X|^2), \ & G'_1(X) = O(1/|X|^3) \quad &  \text{for $X$ large}.
\end{array}\right.
\end{equation*}
In particular,
\begin{equation*}
\langle X \rangle^{2+\varepsilon} \partial^j_X G_1 = O(1/|X|^{1-\varepsilon}) \quad \text{for } X \text{ large.} 
\end{equation*}
Denote $r(X) := \langle X \rangle^{2+\varepsilon}$. Then we see that
\begin{align*}
\left| r G_1' \ast f_2 \right| & = \left| \int_{\mathbb R} r(X) G_1'(W) f_2(X-W) \, dW \right| \\
& \le \int_{\mathbb R} \left| G_1'(W) {r(X) \over r(X-W)} \right| \left| f_2(X-W) r(X-W) \right| \, dW\\
  & \le \sup_{X,W} \frac{r(X)}{r(X-W) r(W)}  \| r G_1' \|_{L^p} \|r f_2\|_{L^q} \\
  &\lesssim \| r G_1' \|_{L^p} \|f_2^{q-2} r^{q}\|^{1/q}_{L^\infty} \|f_2\|_{L^2}^{2/q}.
\end{align*}
Note that now 
\[
r G_1' \in L^p(\mathbb R) \quad \text{for any } p > {1\over 1 - \varepsilon}.
\]
On the other hand, $f_2 \in L^2$ and 
\[
\|r^q f^{q-2}_2\|^{1/q}_{L^\infty} = \left\| r^{q/(q-2)} f_2 \right\|^{(q-2)/q}_{L^\infty} \le \|f_2\|^{(q-2)/q}_{C^0_{w^2}}, \quad \text{ for } \quad  {q \over q - 2} \le {4 \over 3 + \varepsilon}.
\]

Therefore we have
\begin{equation}\label{range q}
\varepsilon < {1\over q} \le {1- \varepsilon \over 8},
\end{equation}
which implies that $\varepsilon < {1\over 9}$. This way choosing $p, q$ that satisfy \eqref{range q} we have
\begin{equation}
  \label{this one}
  r G_1' = \langle X \rangle^{2+\varepsilon} G_1' \in L^\infty.
\end{equation}

Writing
\begin{align*}
  \eta &= G_1 * f_2 + G_2 * f_2, 
  \qquad 
  \eta_X = G_1' * f_2 + G_2' * f_2.
\end{align*}
and using \eqref{decay 1} and \eqref{this one} we conclude that
\begin{equation*}
\langle X \rangle^{2+\varepsilon} \eta_X \in C_{\bdd}(\R).\qedhere
\end{equation*}
\end{proof}

\subsection{Proof of existence}

Having shown that $\F$ is smooth and well-defined, we can at last complete the argument leading to Theorem \ref{existence theorem}.
\begin{proof}[Proof of Theorem \ref{existence theorem}]
Following our discussion above, we will in fact prove statements (a) and (b) simultaneously.  From Lemma~\ref{Ksmooth}, we have seen that the mapping $\mathcal{F}: C^{3+\alpha}_w(\overline R) \times C^{3+\alpha}_w(T) \times \R \to C^{3+\alpha}_w(\overline R) \times C^{1+\alpha}_w(T)$ defined in \eqref{F def} is $C^1$.   

It is also evident that the Fr\'echet derivative $D_{(\psi,\eta)}\F(0,0,0)$ is a linear isomorphism from $C^{3+\alpha}_w(\overline R) \times C^{3+\alpha}_w(T)$ to $C^{3+\alpha}_w(\overline R) \times C^{1+\alpha}_w(T)$. Indeed, the nonlinear terms in $f_1$ and $f_2$ are quadratic or higher, and so 
    \begin{align*}
      D_{(\psi,\eta)}\F(0,0,0)
      &= 
      \begin{pmatrix}
        \mathrm{id}  & 0 \\
        0 & g-\sigma \partial_X^2
      \end{pmatrix}.
    \end{align*}
    Because our weight $w$ is algebraic, $g-\sigma \partial_X^2$ is invertible as a mapping
    $C^{3+\alpha}_w (T) \to C^{1+\alpha}_w(T)$, with its inverse given
    as convolution with the exponentially decaying kernel $G_2$ from Lemma \ref{improved derivative decay lemma}.
    Thus the full operator matrix $D_{(\psi,\eta)}\F(0,0,0)$ is 
    invertible.
    
    We can now apply the implicit function theorem to $\mathcal{F}$ at the trivial solution $(0,0,0)$ to deduce the existence of a local curve of solutions in $C^{3+\alpha}_w(\overline R) \times C^{3+\alpha}_w(T) \times \R$.  These weighted H\"older spaces lie inside the Sobolev spaces from Section~\ref{sec sobolev} with $k=3$, and so the local uniqueness for $\eta,\psi$ in Sobolev spaces implies that they agree with those constructed in \cite{shatah2013travelling}. In particular, the corresponding $(\eta,u,c)$ solve the full problem \eqref{steady euler plus bc} and, in the point vortex case, the advection condition \eqref{u vortex advection}.

   Next, Lemma~\ref{improved derivative decay lemma} ensures that, for each $(\eta,u,c)$ in this family, $\eta_X$ decays faster as stated in \eqref{eta decay}.  Since $\eta$ is small in $C^{3+\alpha}_w(\R)$, Remark~\ref{decay of phi remark} implies that $\varphi = o(1)$ and hence by Theorem \ref{decay theorem} that $\eta$ has the improved decay \eqref{higher derivatives better decay eta}. Lastly, the leading-order expressions for $\eta$ and $u$ at infinity given in  \eqref{SWZ asymptotics} are obtained by using \eqref{dipole formula} to compute the dipole moment $p$, and inserting that into \eqref{n-d better decay}.  This last straightforward calculation is left to Section~\ref{example section}.  
\end{proof}

\section{Asymptotic properties}

The purpose of this section is to show that the weaker localization assumption \eqref{decay assumptions} implies the improved the decay and asymptotic form claimed in Theorem~\ref{decay theorem}. Our strategy is to split the velocity field as in Section \ref{splitting section} and then separately investigate $\varphi$ and $V$.

\subsection{Asymptotic form of \texorpdfstring{$V$}{V}}
We begin with an elementary lemma that establishes the asymptotics for $V$ under the vorticity localization assumptions in \eqref{localized vorticity assumptions}.  

\begin{lemma}[Asymptotics for $V$ with localized vorticity] \label{V asymptotics lemma} 
Suppose that $\omega$ satisfies \eqref{localized vorticity assumptions}, and let $V$ be defined as in \eqref{n-d V def}. Then 
\begin{equation}
  \label{V asymptotics}
  V = -{1\over \gamma_n}\nabla \left( \frac{m \cdot x}{|x|^n} \right) + O\left( {1\over |x|^{n+\varepsilon}} \right)
  \qquad 
  \text{as $\abs x \to \infty$},
\end{equation}
where $\gamma_n$ is the surface area of the unit sphere in $\R^n$ and $m$ is the vortex impulse \eqref{vortex impulse}.
\end{lemma}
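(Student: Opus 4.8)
The plan is to expand the kernel in the integral defining $V$ in a multipole series and identify the first two terms. Consider first $n=3$, where by \eqref{3-d V def} the field $V$ is literally the Biot--Savart integral $V(x) = \gamma_3^{-1}\int_\Omega \omega(y)\times(x-y)|x-y|^{-3}\,dy$. I would write $K(x-y) := (x-y)/|x-y|^3 = -\nabla_x(1/|x-y|)$ and Taylor expand $K(x-y) = K(x) - (y\cdot\nabla)K(x) + \text{(remainder)}$ for $|y| \le |x|/2$, splitting the integration domain into $\{|y| \le |x|/2\}$ and its complement. On the inner region, the zeroth-order term $K(x)\int_\Omega \omega\,dy$ vanishes because $\int_\Omega \omega\,dx = 0$ (this is exactly \eqref{net vorticity zero}, which follows from \eqref{3d vanishing assumption} as noted in Appendix~\ref{V appendix}), and the first-order term, after contracting $\omega(y)\times\big((y\cdot\nabla)K(x)\big)$ and integrating in $y$, produces precisely $-\gamma_n^{-1}\nabla\big((m\cdot x)/|x|^n\big)$ by the definition \eqref{vortex impulse} of $m$; here one uses that $|\nabla^2 K(x)|=O(|x|^{-n-1})$ together with the moment bound $|x|^k\omega\in L^1$ to control the Taylor remainder by $O(|x|^{-n-1})\int |y|^2|\omega(y)|\,dy = O(|x|^{-n-1})$, which is better than the claimed $O(|x|^{-n-\varepsilon})$. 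On the outer region $\{|y|>|x|/2\}$, I bound $|V|$ there crudely using $\omega\in L^1\cap L^\infty$ and the moment condition: $\int_{|y|>|x|/2}|\omega(y)||x-y|^{-2}\,dy$ splits into a piece near $y\approx x$ (handled by $\|\omega\|_{L^\infty}$ and the local integrability of $|x-y|^{-2}$ in $\mathbb{R}^3$) and a far piece controlled by $|x|^{-k}\int|y|^k|\omega|\,dy$; since $k > n^2 > n$, both are $o(|x|^{-n-\varepsilon})$. The same computation gives the leading-order term $-\gamma_n^{-1}\nabla((m\cdot x)/|x|^n)$ for $V$ itself rather than just $\nabla\varphi$-type corrections, and differentiating the integral representation once more (the kernel is smooth away from the diagonal) upgrades the estimate to hold for $\nabla V$ as well, so the $O(|x|^{-n-\varepsilon})$ error is legitimate in the pointwise sense claimed.

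For $n=2$ the argument is analogous but one must account for the phantom vortex. Here $V = \nabla^\perp\big[(\gamma_2^{-1}\log|\cdot|)*\omega - \gamma_2^{-1}\varpi\log|\cdot-\xi^*|\big]$, and the total mass of the signed measure $\omega - \varpi\,\delta_{\xi^*}$ is zero by the definition of $\varpi$. So again the monopole (here, logarithmic) term cancels in the far field, and expanding $\log|x-y| = \log|x| - x\cdot y/|x|^2 + O(|y|^2/|x|^2)$ for $|y|\le|x|/2$ produces the dipole term $-\gamma_2^{-1}\nabla^\perp\big((x^\perp\cdot\text{moment})/|x|^2\big)$, which after using $\nabla^\perp$ and the identity relating $x^\perp$-moments to the planar vortex impulse $m = -\int\omega(x-\xi^*)^\perp\,dx$ becomes $-\gamma_2^{-1}\nabla\big((m\cdot x)/|x|^2\big)$. (One should check that the $\xi^*$-shift in the definition of $m$ is harmless: changing the base point adds $\varpi(\xi^*)^\perp$ to the first moment, but this is multiplied by the total mass of $\omega-\varpi\delta_{\xi^*}$, which is $0$, so the dipole term is independent of the choice — and indeed must be, since $V$ is.) The outer-region and remainder estimates go exactly as in three dimensions, using $\omega\in L^1\cap L^\infty$, local integrability of $|x-y|^{-1}$ and $|x-y|^{-2}$ in the plane, and the moment bound with $k>n^2=4$.

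I expect the only real bookkeeping obstacle to be verifying that the first-order Taylor coefficient, after the cross product (in $n=3$) or the $\nabla^\perp$ (in $n=2$), collapses exactly to $-\gamma_n^{-1}\nabla\big((m\cdot x)/|x|^n\big)$ with the correct sign and normalization — this is a short but fiddly vector-calculus identity, relying on $\nabla\big((p\cdot x)/|x|^n\big)$ being the standard dipole field and on matching it against $\int\omega\times\big((y\cdot\nabla)K(x)\big)\,dy$ via $\omega\times(y\times x) = y(\omega\cdot x) - x(\omega\cdot y)$ (or its two-dimensional analogue). Everything else — domain splitting, the vanishing of the monopole, and the error estimates — is routine given the hypotheses \eqref{localized vorticity assumptions} and the zero-net-vorticity fact from Appendix~\ref{V appendix}. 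The condition $k > n^2$ is far more than needed here (any $k>n$ would do); presumably its strength is dictated by later arguments, not by this lemma.
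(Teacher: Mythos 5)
Your overall strategy --- multipole expansion of the kernel on an inner region, interpolation between the $L^1$ moment bound and $\|\omega\|_{L^\infty}$ on the outer region, and cancellation of the monopole term via \eqref{net vorticity zero} (resp.\ the phantom vortex in two dimensions) --- is the same as the paper's, which splits at $|y|\le|x|^{1-\varepsilon}$ rather than $|x|/2$ and packages the outer estimate as Lemma~\ref{lem_velocity}. The two-dimensional half of your argument is essentially complete (modulo expanding the gradient kernel directly rather than differentiating an expansion of $\log|x-y|$), and your observation that the dipole coefficient is independent of the base point because the combined measure $\omega-\varpi\delta_{\xi^*}$ has zero total mass is correct.

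The gap is in the three-dimensional dipole identification. You assert that the first-order Taylor term collapses to $-\gamma_3^{-1}\nabla\left((m\cdot x)/|x|^3\right)$ ``by the definition of $m$'' via the pointwise identity $\omega\times(y\times x)=y(\omega\cdot x)-x(\omega\cdot y)$. It does not: that term equals $\gamma_3^{-1}\left[\,2m/|x|^3+3\left(\textstyle\int_\Omega(x\cdot y)\,\omega(y)\,dy\right)\times x/|x|^5\right]$, and the matrix $M_{ij}:=\int_\Omega y_j\omega_i\,dy$ appearing in the second piece is not determined by $m$ --- only its antisymmetric part is. One needs the additional integral identity $\int_\Omega(y_i\omega_j+y_j\omega_i)\,dy=0$, i.e.\ the vanishing of the symmetric part of $M$, which the paper obtains by applying the divergence theorem to the field $y_iy_j\omega$ using $\nabla\cdot\omega=0$ \emph{and} the tangentiality condition \eqref{3d vanishing assumption}, with some care because $\omega$ is not compactly supported: the boundary term is an $H^{1/2}$--$H^{-1/2}$ pairing and must be killed along a suitable sequence of radii using the moment hypothesis. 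Only after this does $\int(x\cdot y)\,\omega\,dy=m\times x$ and hence the claimed dipole form emerge; otherwise the leading order retains an extra contribution from the symmetric moment. This is a missing structural input, not routine bookkeeping. Separately, your parenthetical that ``any $k>n$ would do'' is incorrect: the near-diagonal portion of the outer region is controlled only through the interpolation $\|\omega\|_{L^\infty}^{(n-1)/n}\|\omega\|_{L^1(B)}^{1/n}$ (equivalently, optimizing the splitting radius), which yields decay $|x|^{-k/n}$ and therefore requires $k>n(n+\varepsilon)$; the hypothesis $k>n^2$ is essentially what this lemma itself needs.
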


Statements like \eqref{V asymptotics} are well known for compactly supported vorticity, see for instance the formal proof in 
\cite[Chapter 3.2]{saffman1992book}.  Similar computations are commonplace in electrostatics where the term dipole originates.  The main point of Lemma \ref{V asymptotics lemma}, therefore, is that the asymptotics are the same under the weaker localization assumptions in \eqref{localized vorticity assumptions}.  Because it is somewhat technical, the proof is deferred to Appendix \ref{V appendix}. 

 As a simple corollary, we detail the decay rates of two quantities that are important for the analysis in the next section; cf.\ \eqref{kinematic condition} and \eqref{bernoulli condition}.

 \begin{corollary} \label{boundary term asymptotics corollary} 
 Under the hypotheses of Lemma \ref{V asymptotics lemma}, 
 if $\eta$ has the decay \eqref{eta decay}, it follows that 
\begin{equation}
  c \cdot V|_S = -\frac{1}{\gamma_n} c^\prime \cdot \nabla \left( \frac{m^\prime \cdot x^\prime}{|x^\prime|^n} \right) + O \left(\frac{1}{|x^\prime|^{n+\varepsilon}} \right) \label{cV asymptotics} 
\end{equation}
and 
\begin{equation}
  \label{NV asymptotics}
  |x|^n N \cdot V|_S = -{m\cdot e_n\over \gamma_n } + O\left( {1\over |x'|^{\varepsilon}} \right),  
\end{equation}
as $|x^\prime| \to \infty$.  
\end{corollary}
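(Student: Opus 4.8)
The plan is to derive both asymptotic expansions directly from Lemma~\ref{V asymptotics lemma} by restricting $V$ to the free surface $S$ and exploiting the decay \eqref{eta decay}. The essential mechanism is that on $S$ we have $x = (x', \eta(x'))$ with $\eta = O(|x'|^{-(n-1+4\varepsilon)})$, so $\eta$ is negligible compared to $|x'|$ as $|x'| \to \infty$; in particular $|x| = |x'|(1 + O(|x'|^{-2(n-1)}))$ and one can replace $x$ by $x'$ (viewed as $(x',0)$) inside the dipole field up to a controlled error. I would first write $V|_S = -\frac{1}{\gamma_n}\nabla\big(\frac{m\cdot x}{|x|^n}\big)\big|_{x=(x',\eta)} + O(|x'|^{-(n+\varepsilon)})$, using that the error term $O(|x|^{-(n+\varepsilon)})$ in \eqref{V asymptotics} is $O(|x'|^{-(n+\varepsilon)})$ on $S$ since $|x| \geq |x'|$ there.

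For \eqref{cV asymptotics}: dot with $c = (c',0)$. Since $c$ has no vertical component, only the horizontal components of $\nabla(\frac{m\cdot x}{|x|^n})$ contribute, so $c \cdot V|_S = -\frac{1}{\gamma_n}\, c' \cdot \nabla_{x'}\big(\frac{m\cdot x}{|x|^n}\big)\big|_{x=(x',\eta)}$, where $\nabla_{x'}$ denotes the gradient in the first $n-1$ variables (treating $x_n$ as frozen). Then I would Taylor-expand this smooth function of $x$ around $x = (x',0)$: the value at $(x',0)$ is exactly $-\frac{1}{\gamma_n} c' \cdot \nabla\big(\frac{m'\cdot x'}{|x'|^n}\big)$ because when $x_n = 0$ the numerator $m \cdot x = m' \cdot x'$ (recall $m = (m',0)$ by the structure of the vortex impulse — actually here $m$ need not have vanishing vertical component, so I should keep $m \cdot x|_{x_n=0} = m' \cdot x'$ and note the vertical component of $m$ drops out on $\{x_n = 0\}$). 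The correction from moving $x_n$ from $0$ to $\eta$ is $\eta$ times a derivative of a homogeneous degree $-n$ function, hence $O(|\eta|\,|x'|^{-(n+1)}) = O(|x'|^{-(n-1+4\varepsilon)-(n+1)})$, which is easily $O(|x'|^{-(n+\varepsilon)})$. Collecting terms gives \eqref{cV asymptotics}.

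For \eqref{NV asymptotics}: the outward unit normal is $N = (1+|\nabla\eta|^2)^{-1/2}(-\nabla\eta, 1)$, so $N = (0,\dots,0,1) + O(|\nabla\eta|) = e_n + O(|x'|^{-(n+4\varepsilon)})$ by \eqref{eta decay} with $|\beta|=1$. Thus $N \cdot V|_S = e_n \cdot V|_S + O(|x'|^{-(n+4\varepsilon)}) \cdot |V|_S|$. Since $|V|_S| = O(|x'|^{-n})$ this last term is $O(|x'|^{-(2n+4\varepsilon)})$, far smaller than needed. For the main term, $e_n \cdot \nabla\big(\frac{m\cdot x}{|x|^n}\big) = \partial_{x_n}\big(\frac{m\cdot x}{|x|^n}\big) = \frac{m_n}{|x|^n} - n\frac{(m\cdot x)x_n}{|x|^{n+2}}$. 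Evaluated on $S$ and multiplied by $|x|^n$: the first piece gives $m_n = m \cdot e_n$; the second gives $-n(m\cdot x)x_n/|x|^2$, which on $S$ is $O(|m||x'|\cdot|\eta|/|x'|^2) = O(|\eta|/|x'|) = O(|x'|^{-(n+4\varepsilon)})$, hence $O(|x'|^{-\varepsilon})$. Multiplying the error term $O(|x|^{-(n+\varepsilon)})$ from \eqref{V asymptotics} by $|x|^n$ gives $O(|x'|^{-\varepsilon})$ as well. Therefore $|x|^n N \cdot V|_S = -\frac{1}{\gamma_n}(m \cdot e_n) + O(|x'|^{-\varepsilon})$, which is \eqref{NV asymptotics} after noting the overall sign comes from the $-1/\gamma_n$ prefactor in \eqref{V asymptotics}.

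The argument is essentially bookkeeping: the only mild subtlety — and the step I would be most careful about — is tracking which component of $m$ survives in each formula, since $m$ is not assumed purely horizontal in general, and confirming that the vertical part of $m$ genuinely drops out of \eqref{cV asymptotics} (it does, because it multiplies $x_n = \eta = o(|x'|)$) while it is precisely what appears in \eqref{NV asymptotics}. A secondary point is making sure the powers of $\varepsilon$ work out with room to spare; they do, comfortably, because \eqref{eta decay} carries the generous exponent $n-1+|\beta|+4\varepsilon$ rather than the bare $n-1+|\beta|$.
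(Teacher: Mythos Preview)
Your proposal is correct and follows essentially the same approach as the paper: restrict the dipole asymptotics \eqref{V asymptotics} for $V$ to $S$, expand $N = e_n + O(|\nabla\eta|)$ and $x = (x',\eta)$ using \eqref{eta decay}, and track the errors. The paper's proof is simply a more compressed version of yours (it declares \eqref{cV asymptotics} ``immediate'' and writes out the product expansion for \eqref{NV asymptotics} in one line), while you spell out the Taylor step in $x_n$ and the role of $m_n$ more carefully; your self-correction about $m$ not being purely horizontal is exactly the right observation.
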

\begin{proof}  
The first equation \eqref{cV asymptotics} follows immediately from Lemma \ref{V asymptotics lemma} and the decay assumptions on $\eta$ in \eqref{eta decay}.  For the second, we compute that
\begin{align*}
  |x|^n N \cdot V|_S 
  &= 
  -\frac 1{\gamma_n} 
  \left( \abs{x'}^n + O\!\left( \frac 1{\abs{x'}^{n-1+\varepsilon}} \right) \right)
  \left( e_n + O\!\left(\frac 1{\abs{x'}^{n+\varepsilon}} \right) \right)\\
  &\qquad\qquad\qquad\cdot
  \left( \frac m{\abs{x'}^n} - n \frac{(m'\cdot x')x'}{\abs{x'}^{n+2}}
  + O\!\left( \frac 1{\abs{x'}^{n(n+1+\varepsilon)}} \right)\right)\\
  &= - \frac{m \cdot e_n}{\gamma_n} + O\!\left( \frac 1{\abs{x'}^{n+\varepsilon}} \right),
\end{align*}
where we have again used the decay assumptions \eqref{eta decay}. 
\end{proof}

\subsection{Asymptotic form of \texorpdfstring{$\varphi$}{phi}}

Now we turn to the potential for the irrotational part of the flow. Consider first localized vorticity of Case~\I. Recall that we split the velocity $u = \nabla \varphi + V$ where the vortical part $V$ is defined by \eqref{n-d V def}. The velocity potential $\varphi$ can be recovered from $\eta$ and $V$ as the unique solution to the elliptic problem
\begin{subequations} \label{varphi elliptic problem}
\begin{alignat}{2}
 \Delta \varphi &= 0 && \qquad \textrm{in } \Omega  \label{varphi harmonic} \\
 N \cdot \left( \nabla \varphi +  V \right) & = c \cdot N && \qquad \textrm{on } S, \label{kinematic condition} 
 \end{alignat}
 which vanishes as $|x| \to \infty$.  From the Euler equations \eqref{steady euler plus bc}, we see that $\varphi$ must in addition satisfy the nonlinear boundary condition
  \label{governing equations} 
\begin{alignat}{2}
 \frac{1}{2} | \nabla \varphi + V|^2 - c \cdot \left( \nabla \varphi + V \right) + g \eta & = -\sigma \nabla \cdot N && \qquad \textrm{on } S. \label{bernoulli condition}
 \end{alignat}\end{subequations}
Here we have applied Bernoulli's law on $S$ and evaluated the pressure using the dynamic boundary condition from \eqref{u boundary cond}.  This is justified by our assumption in \eqref{3d vanishing assumption} that $\omega$ is tangential to the free surface.  

As it is harmonic, we clearly have $\varphi \in C^\infty(\Omega)$.  To confirm the regularity of $\varphi$ up to the boundary, we first notice from \eqref{vorticity assumptions} and classical potential theory that $V \in C^{\alpha}(\overline{\Omega})$ for any $\alpha \in (0,1)$; see, for example, \cite[Chapter 4]{gilbarg2001elliptic}.
Now, under the assumption that $\eta \in C_\bdd^2(\R^{n-1})$ and decays according to \eqref{eta decay}, the Neumann data for $\varphi$ in \eqref{kinematic condition} is of class $C^\alpha(S)$, for all $\alpha \in (0,1)$.  Applying elliptic regularity theory, these deductions at last lead us to the conclusion that 
\begin{equation}
  \varphi \in C^{1+\alpha}(\overline{\Omega}), \qquad \textrm{for all } \alpha \in (0,1). \label{phi C^1} 
\end{equation}  

Finally, we observe that the decay rate assumed for $u$ in \eqref{u decay} together with the asymptotics \eqref{V asymptotics} for $V$ imply that the potential satisfies 
\begin{equation}
  \varphi = o\left( \frac{1}{|x|^{n-2}}\right), \qquad \textrm{as } |x| \to \infty.\label{varphi decay} 
\end{equation}
Indeed, in two dimensions, this asserts only that $\varphi$ vanishes at infinity, which follows from the discussion above.  For the three-dimensional case, \eqref{varphi decay} would be a direct consequence of the fundamental theorem of calculus if $\eta$ were identically zero; the modification of this argument to allow for nontrivial $\eta$ only requires that $\eta$ and $\nabla\eta$ be uniformly bounded.

\begin{lemma} \label{dipole lemma} 
Suppose that there exists a solitary wave with Case~\I\ vorticity
and the decay \eqref{decay assumptions}.
Then there exists a dipole moment $q = (q', m_n/\gamma_n)$ such that 
\begin{equation}
  \varphi(x) = \frac{q \cdot x}{|x|^n} + O\left(\frac{1}{|x|^{n-1+\varepsilon}} \right), \qquad \nabla \varphi(x) = \nabla \left( \frac{q\cdot x}{|x|^n} \right) + O\left(\frac{1}{|x|^{n+\varepsilon}} \right), \label{phi asymptotic} 
\end{equation}
as $|x| \to \infty$.
\end{lemma}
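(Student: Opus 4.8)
The plan is to reduce the problem to a boundary-value problem for $\varphi$ on the half-space and exploit the known decay rates of the data. First I would observe that $\varphi$ is harmonic in $\Omega$ and, by \eqref{varphi decay}, vanishes at infinity with the rate $o(1/|x|^{n-2})$; moreover $\varphi \in C^{1+\alpha}(\overline\Omega)$ by \eqref{phi C^1}. The natural approach is to represent $\varphi$ through a single- and double-layer potential (or equivalently a Green's representation formula) over the free surface $S$, using the fact that the Neumann data $N\cdot\nabla\varphi|_S = c\cdot N - N\cdot V|_S$ is controlled: by \eqref{eta decay} the term $c\cdot N$ decays like $O(1/|x'|^{n-1+|\beta|+4\varepsilon})$, and by \eqref{NV asymptotics} in Corollary~\ref{boundary term asymptotics corollary} the term $N\cdot V|_S$ is $-m_n/(\gamma_n |x|^n) + O(1/|x'|^{n+\varepsilon})$. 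So the Neumann data behaves, to leading order, like $-m_n/(\gamma_n|x'|^n)$ plus a faster-decaying remainder.

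Next, I would pass to the flattened half-space picture (as in Section~\ref{splitting section}): since $S$ is asymptotically flat with $\eta$ small enough, $\varphi$ is a small perturbation of a harmonic function on $R^n_- = \{x_n < 0\}$ with prescribed Neumann data on $\{x_n = 0\}$. The explicit solution on the half-space is the convolution of the Neumann data with the half-space Neumann kernel $\sim c_n/|x|^{n-1}$. Feeding in the decay of the data, convolution against this kernel produces, from the $O(1/|x'|^{n+\varepsilon})$ and $O(1/|x'|^{n-1+4\varepsilon})$ remainders in the data, a contribution that is $O(1/|x|^{n-1+\varepsilon})$; and from the leading $-m_n/(\gamma_n|x'|^n)$ piece of $N\cdot V|_S$ one extracts, after a multipole-type expansion of the kernel, precisely a dipole term $\varphi_{\mathrm{dipole}}(x) = (q\cdot x)/|x|^n$ with vertical component $q_n = m_n/\gamma_n$. (The horizontal components $q'$ are determined by the remaining, faster-decaying part of the data together with the $c\cdot N$ term and can be read off as the first moment of the total Neumann data; the key point for later is that $q_n = m_n/\gamma_n$ is pinned down exactly.) Differentiating the representation formula and using that differentiation of the kernel gains one power of decay gives the second estimate in \eqref{phi asymptotic}.

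To make this rigorous I would subtract off the dipole: set $\tilde\varphi := \varphi - (q\cdot x)/|x|^n$, which is harmonic in $\Omega$ (away from the origin, but the origin is interior so this is harmless after restricting to a neighborhood of infinity), vanishes at infinity, and has Neumann data on $S$ that is now $O(1/|x'|^{n-1+\varepsilon'})$ for some $\varepsilon' > \varepsilon$ — that is, the dipole's own normal derivative cancels the leading $1/|x'|^n$ behavior up to acceptable error, using again \eqref{eta decay} to control the discrepancy between $N$ on $S$ and $e_n$. A weighted elliptic estimate on the half-space (of the type used in Lemma~\ref{simplegreens}, or a direct Kelvin-transform/barrier argument comparing $\tilde\varphi$ to $|x|^{-(n-1+\varepsilon)}$) then upgrades this to $\tilde\varphi = O(1/|x|^{n-1+\varepsilon})$ and $\nabla\tilde\varphi = O(1/|x|^{n+\varepsilon})$, which is exactly the claim.

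The main obstacle is the bookkeeping of decay rates through the convolution with the Neumann kernel: the kernel $|x|^{-(n-1)}$ is not integrable at infinity, so one cannot simply bound the convolution by the product of norms, and one must instead split the domain of integration (near-field vs.\ far-field relative to the evaluation point), perform the multipole expansion carefully enough to capture exactly the dipole term and no more, and verify that the assumed rate $\varepsilon < 1/4$ (and the $4\varepsilon$ in \eqref{eta decay}) is enough slack to push the remainder below $1/|x|^{n-1+\varepsilon}$. A secondary technical point is justifying the representation formula and the interchange with the dipole subtraction on the unbounded, only-asymptotically-flat domain $\Omega$ rather than the exact half-space; this is handled by the flattening change of variables together with the fact that the perturbation terms are supported in a slab and inherit the decay of $\eta$.
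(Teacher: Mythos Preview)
Your route diverges significantly from the paper's, and there is a genuine gap. The paper does not use layer potentials or multipole expansions of the Neumann data. Instead it applies the Kelvin transform $\tilde\varphi(\tilde x) := |\tilde x|^{2-n}\varphi(\tilde x/|\tilde x|^2)$, mapping a neighborhood of infinity in $\Omega$ to a neighborhood of $\tilde x = 0$ in a transformed domain $\Omega^\sim$ with $C^2$ boundary. The kinematic condition \eqref{kinematic condition} becomes an oblique boundary condition $\tilde N\cdot\nabla\tilde\varphi + \tilde a\,\tilde\varphi = \tilde b$ on $S^\sim$ whose coefficients $\tilde a,\tilde b$ are shown to be $C^\varepsilon$ near the origin (this is where the decay assumptions on $\eta$, Corollary~\ref{boundary term asymptotics corollary}, and Lemma~\ref{lem inversion} enter). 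Schauder theory for oblique problems then gives $\tilde\varphi\in C^{1+\varepsilon}$ near $0$, so $\tilde\varphi(\tilde x) = q\cdot\tilde x + O(|\tilde x|^{1+\varepsilon})$ with $q := \nabla\tilde\varphi(0)$; undoing the transform yields \eqref{phi asymptotic}. The identity $q_n = m_n/\gamma_n$ then follows by evaluating the transformed boundary condition at $\tilde x = 0$, where $\tilde a(0)=0$, $\tilde N(0)=e_n$, and $\tilde b(0)=m_n/\gamma_n$.

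The gap in your proposal is the horizontal component $q'$. On the exact half-space $\{x_n<0\}$, the horizontal dipole $q'\cdot x'/|x|^n$ is harmonic, vanishes at infinity, and has \emph{zero} Neumann data on $\{x_n=0\}$: one computes $\partial_{x_n}(x_j/|x|^n) = -n x_j x_n/|x|^{n+2}$, which vanishes at $x_n=0$ for $j<n$. Hence $q'$ is completely invisible to any representation built from the half-space Neumann kernel, and it cannot be ``read off as the first moment of the total Neumann data.'' Your subtraction step is then circular: you cannot subtract $(q\cdot x)/|x|^n$ without already knowing $q'$, and nothing in your scheme produces it. (Relatedly, the rate $O(1/|x'|^{n-1+\varepsilon'})$ you wrote for the remainder's Neumann data would in any case be too weak to force the conclusion.) What actually fixes $q'$ is the global requirement that $\varphi$ be harmonic and regular throughout $\Omega$, not just in a neighborhood of infinity --- information your local layer-potential picture discards. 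The Kelvin transform argument sidesteps this entirely: it never attempts to compute $q'$, only to assert that \emph{some} gradient $q=\nabla\tilde\varphi(0)$ exists, which is exactly what $C^{1+\varepsilon}$ regularity of $\tilde\varphi$ at the origin provides.
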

\begin{proof}
The main idea behind this argument is that the asymptotic properties \eqref{phi asymptotic} can be determined via elliptic theory from the kinematic boundary condition, the equation satisfied by $\varphi$, and the decay assumption \eqref{decay assumptions}.  In comparison to the irrotational case considered in \cite{wheeler2016integral}, there are additional terms coming from the vortical contribution $V$ that must be understood using the asymptotic information contained in Lemma \ref{V asymptotics lemma} and Corollary \ref{boundary term asymptotics corollary}.  

  With that in mind, recall that the Kelvin transform $\tilde\varphi = \tilde\varphi(\tilde x)$ of $\varphi = \varphi(x)$ is defined by 
\[ \tilde{x} := \frac{x}{|x|^2}, \qquad \tilde\varphi (\tilde x) := \frac{1}{|\tilde x|^{n-2}} \varphi \left( \frac{\tilde x}{|\tilde x|^2} \right).  \]
Denote by $\Omega^{\sim}$ the image of $\Omega \setminus B_1(0)$ under the map $x \mapsto \tilde x$: 
\[ \Omega^\sim := \left\{ \tilde{x} \in \R^n : \frac{\tilde{x}}{|\tilde x|^2} \in \Omega \setminus B_1(0) \right\}.\]
The asymptotic behavior of $\varphi$ as $|x| \to \infty$ is determined by the behavior of $\tilde \varphi$ in a neighborhood of the origin in the $\tilde x$-variables.  Notice that by \eqref{eta decay}, we have $0 \in \partial \Omega^\sim$, and \eqref{varphi decay} ensures that $\tilde\varphi (0) = 0$ and $\tilde \varphi$ can be extended to the boundary as a $C^0(\overline{\Omega^\sim})$ class function.  

Similarly as in \cite[Appendix A]{wheeler2016integral}, we find that the decay assumptions on $\eta$ we made in \eqref{eta decay} guarantee that $\partial \Omega^\sim$ is $C^2$ in a neighborhood of $0$; let $S^\sim \subset \partial \Omega^\sim$ be a small portion of the boundary containing $\tilde x =0$.  

As the Kelvin transform of a harmonic function is harmonic, we know that $\Delta \tilde \varphi = 0$ in $\Omega^\sim$.  We now show that the kinematic equation \eqref{kinematic condition} leads to an oblique boundary condition for $\tilde \varphi$.  A simple calculation shows that $\tilde N$, the normal vector to $\Omega^\sim$ at $\tilde x \in S^\sim$, is related to $N$ by the formula 
\[ \tilde N (\tilde x) = N(x) - 2 \left(\frac{N(x) \cdot x}{|x|^2} \right) x.\]
We then find that
\[
N \cdot \left(c - V \right) = N \cdot \nabla \varphi = -(n-2) \frac{x \cdot N}{|x|^n} \tilde \varphi + \frac{1}{|x|^n} \tilde{N} \cdot \nabla \tilde \varphi,
\]
or, equivalently,
\begin{equation}
  \tilde N \cdot \nabla \tilde\varphi + \tilde a \tilde\varphi = \tilde b \qquad \textrm{on } S^\sim \setminus \{0\},
  \label{transform boundary condition}
\end{equation}
where $\tilde a = \tilde a(\tilde x)$ and $\tilde b = b(\tilde x)$ are given by
\begin{equation}
  \tilde a(\tilde x) := -(n-2)\left( N \cdot x \right), \qquad \tilde b(\tilde x) := {|x|^n} N \cdot (c - V). 
  \label{def alpha beta} 
\end{equation}
Our objective is to use elliptic theory to infer that $\tilde \varphi$ has the desired H\"older regularity in a neighborhood of the origin in the Kelvin transform variables.  Naturally, this requires us to establish the $C^{\varepsilon}(S^\sim)$ H\"older continuity of the coefficients $\tilde a$ and $\tilde b$ above. This can be achieved from the (stronger) H\"older regularity and the decay properties of $\tilde a,\tilde b$ as functions of the untransformed variable $x$.  In particular, we apply Lemma \ref{lem inversion} with $\alpha = \varepsilon,\ \beta = 2\varepsilon$, and $k = 4\varepsilon$. Moreover from the decay of $\eta$ in \eqref{eta decay} and asymptotics of $V \cdot N|_S$ obtained in  \eqref{NV asymptotics} it follows that 
\begin{equation*}
\tilde a(0) = 0, \quad \tilde b(0) = {m_n \over \gamma_n}.
\end{equation*}

The proof of the lemma is now essentially complete.  Using the regularity of $\varphi$ in \eqref{phi C^1}, we can argue as in \cite[Appendix A]{wheeler2016integral} to show that $\tilde \varphi$ is an $H^1(\Omega^\sim)$ weak solution of the Laplace equation with oblique boundary condition \eqref{transform boundary condition}.  Elliptic regularity theory then implies that $\tilde{\varphi} \in C^{1+\varepsilon}(\Omega^\sim \cup S^\sim)$ (see, for example, \cite[Theorem 5.51]{lieberman2013oblique}), and therefore that it admits the expansion 
\[ 
  \tilde \varphi(\tilde x) = q \cdot \tilde x + O(|\tilde x|^{1+\varepsilon}),
  \qquad 
  \nabla \tilde \varphi(\tilde x) = q + O(|\tilde x|^\varepsilon),
  \]
where $q := \nabla \tilde \varphi(0)$.  Finally, evaluating the transformed boundary condition \eqref{transform boundary condition} at $\tilde x = 0$, we infer that 
  \begin{align*}
    \frac{m_n}{\gamma_n}
    =
    \tilde b
    =
    \tilde N \cdot \grad \tilde\varphi  + \tilde a \tilde \varphi
    = e_n \cdot q
  \end{align*}
  and hence $q = (q^\prime, m_n/\gamma_n)$.  Returning to the original variables, this confirms that $\varphi$ has the asymptotic form \eqref{phi asymptotic}.
\end{proof}

\begin{remark} \label{decay of phi remark} Examining the above argument, it is clear that the localization assumption on $u$ in \eqref{decay assumptions} is needed only insofar as it implies that $\varphi$ decays according to \eqref{varphi decay}.  In two dimensions, this can be removed altogether when $\eta$ has sufficiently small Lipschitz constant.  This is a consequence of the fact that, in this setting, the unique solution of the elliptic system \eqref{varphi elliptic problem} is given by the single-layer potential for the Neumann data $(c-V) \cdot N$.
But an easily calculation shows that $\int_S (c-V) \cdot N\, dS = 0$, which implies that $\varphi=o(1)$. 
\end{remark}

Next, consider the situation for the vortex sheet.  As the flow is irrotational in the air and water, we have $u_\pm = \nabla \varphi_\pm$, where $\varphi_\pm$  will then satisfy
  \begin{subequations} \label{vortex sheet governing equations} 
\begin{alignat}{2}
 \Delta \varphi_\pm &= 0 && \qquad \textrm{in } \Omega_\pm  \label{vortex sheet varphi harmonic} \\
 N_\pm \cdot \nabla \varphi_\pm & = c \cdot N_\pm && \qquad \textrm{on } S \label{vortex sheet kinematic condition} \\
 \frac{1}{2} \jump{\rho | \nabla \varphi|^2} - \jump{\rho c \cdot \nabla \varphi} + g \jump{\rho} \eta & = \pm \sigma \nabla \cdot N_\pm && \qquad \textrm{on } S. \label{vortex sheet bernoulli condition} \end{alignat}
\end{subequations}  
Again, the assumptions on $u$ and $\eta$ in \eqref{decay assumptions} ensure that $\varphi_\pm$ have the decay rate \eqref{varphi decay}.

\begin{corollary} \label{vortex sheet dipole}
Suppose that there exists a traveling wave solution with Case~\VS~vorticity and the decay \eqref{decay assumptions}.  Then there exist dipole moments $p_\pm = (p_\pm^\prime, 0)$ such that 
\begin{equation}
  \varphi_\pm(x) = \frac{p_\pm \cdot x}{|x|^n} + O\left(\frac{1}{|x|^{n-1+\varepsilon}} \right), \qquad \nabla \varphi_\pm(x) = \nabla \left( \frac{p_\pm \cdot x}{|x|^n} \right) + O\left(\frac{1}{|x|^{n+\varepsilon}} \right), \label{phi asymptotic sheet} 
\end{equation}
as $|x| \to \infty$.  
\end{corollary}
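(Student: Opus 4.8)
The plan is to follow the proof of Lemma~\ref{dipole lemma} essentially verbatim, but applied separately in the air region $\Omega_+$ and the water region $\Omega_-$, and with the simplification that each $\varphi_\pm$ is harmonic in an \emph{irrotational} region so that there is no vortical part $V$ to keep track of. Observe first that the kinematic conditions \eqref{vortex sheet kinematic condition} decouple $\varphi_+$ from $\varphi_-$ (the two interact only through the dynamic condition \eqref{vortex sheet bernoulli condition}, which plays no role here), so the two potentials can be handled independently. Fixing a sign, the first step is to record, exactly as in the derivation of \eqref{phi C^1} and \eqref{varphi decay}, that the decay \eqref{eta decay} puts the Neumann data $c \cdot N_\pm$ in $C^\alpha(S)$ for every $\alpha \in (0,1)$, whence $\varphi_\pm \in C^{1+\alpha}(\overline{\Omega_\pm})$ by elliptic regularity; and that the assumption \eqref{u decay} on $u_\pm = \nabla\varphi_\pm$ forces $\varphi_\pm = o(1/|x|^{n-2})$ (trivially in two dimensions, and by the fundamental-theorem-of-calculus argument after \eqref{varphi decay} in three dimensions, using only that $\eta, \nabla\eta$ are bounded).

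Next I would pass to the Kelvin transform $\tilde\varphi_\pm(\tilde x) := |\tilde x|^{2-n}\varphi_\pm(\tilde x/|\tilde x|^2)$, which maps $\Omega_\pm \setminus B_1(0)$ to a domain $\Omega^\sim_\pm$ with $0 \in \partial\Omega^\sim_\pm$; as in \cite[Appendix~A]{wheeler2016integral} and Lemma~\ref{dipole lemma}, the decay \eqref{eta decay} guarantees that $\partial\Omega^\sim_\pm$ is $C^2$ near $0$ and that $\tilde\varphi_\pm$ extends continuously to the boundary with $\tilde\varphi_\pm(0) = 0$. Harmonicity is preserved, and the kinematic condition transforms, exactly as in \eqref{transform boundary condition}, to an oblique boundary condition $\tilde N_\pm \cdot \nabla\tilde\varphi_\pm + \tilde a_\pm\tilde\varphi_\pm = \tilde b_\pm$ on $S^\sim_\pm \setminus \{0\}$, now with $\tilde a_\pm(\tilde x) = -(n-2)(N_\pm \cdot x)$ and $\tilde b_\pm(\tilde x) = |x|^n N_\pm \cdot c$, the only change from \eqref{def alpha beta} being that the $V$ term is gone. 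Invoking Lemma~\ref{lem inversion} (with $\alpha = \varepsilon$, $\beta = 2\varepsilon$, $k = 4\varepsilon$) together with \eqref{eta decay}, exactly as in Lemma~\ref{dipole lemma}, gives $\tilde a_\pm, \tilde b_\pm \in C^\varepsilon(S^\sim_\pm)$; and since $c = (c', 0)$ is horizontal, \eqref{eta decay} yields $N_\pm \cdot x = O(|x'|^{-(n-1+4\varepsilon)})$ and $N_\pm \cdot c = O(|x'|^{-(n+4\varepsilon)})$ on $S$, so that $\tilde a_\pm(0) = \tilde b_\pm(0) = 0$.

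The remainder runs as in Lemma~\ref{dipole lemma}: oblique-derivative elliptic regularity (e.g.\ \cite[Theorem~5.51]{lieberman2013oblique}) gives $\tilde\varphi_\pm \in C^{1+\varepsilon}(\Omega^\sim_\pm \cup S^\sim_\pm)$, hence $\tilde\varphi_\pm(\tilde x) = p_\pm \cdot \tilde x + O(|\tilde x|^{1+\varepsilon})$ and $\nabla\tilde\varphi_\pm(\tilde x) = p_\pm + O(|\tilde x|^\varepsilon)$ with $p_\pm := \nabla\tilde\varphi_\pm(0)$; evaluating the transformed boundary condition at $\tilde x = 0$ yields $\tilde N_\pm(0) \cdot p_\pm = \tilde b_\pm(0) = 0$, and since the surface is asymptotically flat so that $\tilde N_\pm(0) = \mp e_n$, this forces the vertical component of $p_\pm$ to vanish, i.e.\ $p_\pm = (p_\pm', 0)$. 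Undoing the Kelvin transform converts these expansions into \eqref{phi asymptotic sheet}. The only point requiring real care — precisely as in Lemma~\ref{dipole lemma} — is verifying the $C^\varepsilon$ regularity of $\tilde a_\pm, \tilde b_\pm$ near the origin and the $C^2$ geometry of $\partial\Omega^\sim_\pm$ there, both of which rely on the slightly-stronger-than-$L^1$ exponent $n-1+|\beta|+4\varepsilon$ in \eqref{eta decay}; but with the vortical contributions handled by Corollary~\ref{boundary term asymptotics corollary} now absent, this case is strictly easier than the single-fluid one, and I anticipate no new obstacle.
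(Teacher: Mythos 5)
Your proposal is correct and follows essentially the same route as the paper: Kelvin-transform each potential separately, note that the two oblique-derivative problems decouple and that the vortical term $V$ is absent, verify the $C^\varepsilon$ regularity of the coefficients via Lemma~\ref{lem inversion}, and read off $p_{\pm,n}=0$ from $\tilde b_\pm(0)=0$. The extra details you supply (the decay of $\varphi_\pm$, the computation showing $\tilde b_\pm(0)=0$ from the horizontality of $c$ and \eqref{eta decay}) are exactly the ones the paper's proof implicitly relies on.
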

\begin{proof}  This proof works almost identically to that of Lemma \ref{dipole lemma} but without the difficulties related to $V$.  Let $\Omega_\pm^\sim$ denote the image under $x \mapsto \tilde x$
  of $\Omega_\pm \setminus B_1(0)$.  Likewise, let $\tilde \varphi_\pm$ be the Kelvin transform of $\varphi_\pm$.  
 Once again, the vanishing of $\eta$ at infinity \eqref{eta decay} implies that $0 \in \partial \Omega_\pm^\sim$, and the decay of $\varphi_\pm$ gives $\tilde\varphi_\pm (0) = 0$.  It follows that each $\tilde \varphi_\pm$ can be extended to the boundary as a $C^0(\overline{\Omega_\pm^\sim})$ class function.

Furthermore, we can easily confirm that $\partial \Omega_\pm^\sim$ is $C^2$ in a neighborhood of $0$.  Let $S^\sim \subset \partial \Omega_\pm^\sim$ be a portion of the mutual boundary containing $\tilde x =0$.  The transformed potentials $\tilde\varphi_\pm$ are harmonic in $\Omega_\pm^\sim$ and satisfy the oblique boundary conditions 
\[ \tilde N_\pm \cdot \nabla \tilde \varphi_\pm + \tilde a_\pm \tilde \varphi_\pm = \tilde b_\pm \qquad \textrm{on } S^\sim \setminus \{0\},\]
where $\tilde N_\pm$ are the images of $N_\pm$ under the Kelvin transform, and 
\[ \tilde a_\pm(\tilde x) := -(n-2) \left(\frac{N_\pm \cdot \tilde x}{|\tilde x|^2} \right), \qquad \tilde b_\pm(\tilde x) := \frac{1}{|\tilde x|^n} N_\pm \cdot c_\pm.\]
Observe that the elliptic problems for $\tilde \varphi_\pm$ are essentially decoupled.  Arguing exactly as in the irrotational case, we can show that $\tilde a_\pm $ and $\tilde b_\pm$ are uniformly $C^\varepsilon$ in a neighborhood of the origin.  Elliptic regularity then implies the existence of the dipole moments $p_\pm =: \tilde \varphi_\pm(0)$, and the fact that $p_{n\pm} = 0$ follows once more from evaluating the boundary condition at $\tilde x =0$ but noting that $\tilde b(0) = 0$ in this case.  
\end{proof}

\subsection{Asymptotic forms of \texorpdfstring{$\eta$}{eta} and \texorpdfstring{$u$}{u}}

Having determined that $\varphi$ and $V$ are dipoles at infinity, we are now prepared to prove our theorem characterizing the asymptotic forms of $\eta$ and $u$.  

\begin{proof}[Proof of Theorem \ref{decay theorem}]  First consider the statement in part (a).  Solving for $\eta$ in the Bernoulli condition \eqref{bernoulli condition}, we find that 
\begin{align*}
\eta(x^\prime) & = \frac{1}{g} \left( c\cdot \nabla \varphi + \frac{1}{2} |\nabla \varphi|^2 - \sigma \nabla \cdot N + c \cdot V + \nabla \varphi \cdot V + \frac{1}{2}|V|^2 \right) \\
&= \frac{1}{g|x^\prime|^n} \left( c \cdot q - n \frac{(c^\prime \cdot x^\prime)( q^\prime \cdot x^\prime)}{|x^\prime|^2} \right) + \frac{1}{g} \left( c \cdot V + \nabla \varphi \cdot V + \frac{1}{2}|V|^2 \right) + O\left( \frac{1}{|x^\prime|^{n+\varepsilon}} \right),
\end{align*}
where the second line follows from the decay assumed on $\eta$ in \eqref{eta decay} and our estimate of $\varphi$ in \eqref{phi asymptotic}.  Now, from \eqref{V asymptotics} we know that the $\nabla \varphi \cdot V$ and $|V|^2$ are $O(1/|x^\prime|^{n+\varepsilon})$. 
  Inserting the leading-order formula for $c \cdot V$ derived in \eqref{cV asymptotics} then yields
\[ \eta = \frac{1}{g|x^\prime|^n} \left( c^\prime \cdot \left( q^\prime - {m^\prime\over\gamma_n} \right) -n \frac{(c^\prime \cdot x^\prime)\left( (q^\prime - {m^\prime \over \gamma_n}) \cdot x^\prime \right)}{|x^\prime|^2 }  \right) + O\left( \frac{1}{|x^\prime|^{n+\varepsilon}} \right). \]
Defining $p := q - m/\gamma_n$, this is exactly the claimed asymptotic expression for $\eta$ in \eqref{eta n-d better decay}.  Note that because $q_n = m_n/\gamma_n$, it is indeed true that $p_n = 0$.  Likewise, the asymptotic form of $u$ stated in \eqref{u n-d better decay} simply follows from writing $u = \nabla \varphi + V$ and using the dipole formula for $V$ in \eqref{V asymptotics} and for $\nabla \varphi$ in \eqref{phi asymptotic}.  

The argument for (b) is similar.  From \eqref{vortex sheet bernoulli condition}, we find that 
\begin{align*}
\eta(x^\prime) & = \frac{1}{g\jump{\rho}} \left( \jump{\rho c\cdot \nabla \varphi + \frac{1}{2} \rho |\nabla \varphi|^2} - \sigma \nabla \cdot N  \right) \\
&= \frac{1}{g\jump{\rho} |x^\prime|^n} \left( c \cdot \jump{\rho p} - n  \frac{(c^\prime \cdot x^\prime)( \jump{\rho p^\prime} \cdot x^\prime)}{|x^\prime|^2} \right) + O\left( \frac{1}{|x^\prime|^{n+\varepsilon}} \right),
\end{align*}
which implies \eqref{vortex sheet eta better decay}.  The asymptotics for $u_\pm$ asserted in \eqref{vortex sheet u better decay} were already proved in Corollary \ref{vortex sheet dipole}, since $u_\pm = \nabla \varphi_\pm$.  
\end{proof}

\section{Nonexistence and the dipole moment formula}

\subsection{Nonexistence}
First, we establish that there exist no waves with localized vorticity in Case~\I\ or Case~\VS\ having a single-signed free surface profile.  In fact, we prove the stronger statement that all such waves must have no excess mass in the sense that $\int \eta\,dx =0$.   

In the two-dimensional setting, we will need the following result on the configuration of the streamlines in a neighborhood of a point vortex.  
 
 \begin{lemma}[Streamlines] \label{streamlines lemma} 
   Suppose that $n=2$ and that there exists a solitary wave with Case~\PV~vorticity and the decay \eqref{decay assumptions}. Fix $\alpha \in (0,1)$.  For each vortex center $\xi^i \in \Xi$, and $\delta > 0$ sufficiently small, there exists an open connected set $\tilde{B}_\delta^i \subset \Omega$ with $\xi^i \in \tilde{B}_\delta^i$, and $\partial \tilde{B}_\delta^i$ is a closed integral curve of $u-c$ that admits the global parameterization
 \begin{equation}
   \label{parameterization tilde B} \partial \tilde{B}_\delta^i = \{ (\tilde{r}_\delta^i(\theta) \cos(\theta), \, \tilde r_\delta^i(\theta) \sin(\theta)) : \theta \in [0,2\pi) \},
 \end{equation}
 where $\tilde r_\delta^i \in C^{1+\alpha}$ is a $2\pi$-periodic function with $\tilde r_\delta^i(0) = \delta$, $\partial_\theta \tilde r_\delta^i = O(\delta^3)$.
  \end{lemma}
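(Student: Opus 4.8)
The plan is to study the local structure of the velocity field $u-c$ near a point vortex $\xi^i$ and conclude that, after removing the singular logarithmic contribution, the level sets of a suitable stream function are small perturbations of circles. Near $\xi^i$ the vorticity consists only of the point mass $\varpi^i\delta_{\xi^i}$ (the absolutely continuous part $\omega_{\mathrm{ac}}$ is either zero or, by the localization assumptions, bounded and hence contributes a $C^{1+\alpha}$ term), so on a small punctured disk we may write
\begin{equation*}
  u - c = \frac{\varpi^i}{2\pi}\,\nabla^\perp \log\lvert\cdot - \xi^i\rvert + R,
\end{equation*}
where $R$ is the velocity contribution of the remaining vorticity together with the ambient potential flow. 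The advection condition \eqref{u vortex advection} is precisely the statement that $R(\xi^i) = 0$, and elliptic/potential-theoretic regularity (using $\omega \in L^1\cap L^\infty$ away from $\Xi$, together with the decay \eqref{decay assumptions}) gives $R \in C^{1+\alpha}$ in a neighborhood of $\xi^i$, hence $R = O(\lvert x - \xi^i\rvert)$ there. First I would record these facts and fix $\delta$ small enough that this expansion is valid on $\overline{B_{2\delta}(\xi^i)}$.

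Next I would introduce the local stream function: since $u-c$ is divergence free, on the simply connected punctured disk there is $\psi^i$ with $u-c = \nabla^\perp \psi^i$, and by the expansion above $\psi^i(x) = \tfrac{\varpi^i}{2\pi}\log\lvert x - \xi^i\rvert + \text{(a }C^{2+\alpha}\text{ function)}$. Translating so that $\xi^i$ is the origin and passing to polar coordinates $(r,\theta)$, the level set $\{\psi^i = \tfrac{\varpi^i}{2\pi}\log\delta\}$ is, by the implicit function theorem, a graph $r = \tilde r^i_\delta(\theta)$ over the circle provided $\partial_r\psi^i \neq 0$ there — and indeed $\partial_r\psi^i = \tfrac{\varpi^i}{2\pi r} + O(1)$, which is nonzero and of size $\asymp \delta^{-1}$ on $B_{2\delta}\setminus B_{\delta/2}$ once $\delta$ is small. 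This yields $\tilde r^i_\delta \in C^{1+\alpha}$ (one derivative is lost relative to $\psi^i$, but $\psi^i$ is $C^{2+\alpha}$ off the singularity, so this is fine), it is $2\pi$-periodic, and $\tilde r^i_\delta(0) = \delta$ after a rotation of coordinates. By construction $\partial\tilde B^i_\delta$ is a level set of $\psi^i$, hence a closed integral curve of $u-c$, and it bounds an open connected set $\tilde B^i_\delta \subset \Omega$ containing $\xi^i$ (here one uses that $\delta$ is small enough that $\overline{B_{2\delta}(\xi^i)} \subset \Omega$ and that the other vortex centers and the support of $\omega_{\mathrm{ac}}$ near $\xi^i$ do not interfere — shrinking $\delta$ again if needed).

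The remaining point, and the one requiring the most care, is the quantitative estimate $\partial_\theta \tilde r^i_\delta = O(\delta^3)$. Differentiating the defining relation $\psi^i(\tilde r^i_\delta(\theta)\cos\theta, \tilde r^i_\delta(\theta)\sin\theta) = \text{const}$ gives
\begin{equation*}
  \partial_\theta \tilde r^i_\delta = -\frac{\partial_\theta \psi^i}{\partial_r \psi^i}\Big|_{r = \tilde r^i_\delta(\theta)}.
\end{equation*}
The denominator is $\asymp \delta^{-1}$ as noted. For the numerator, the logarithmic term is radial and contributes nothing to $\partial_\theta\psi^i$, so $\partial_\theta\psi^i = r\,\nabla^\perp(\text{smooth part})\cdot(\cos\theta,\sin\theta)^\perp = r\, e_r \cdot R$ evaluated at radius $r = \tilde r^i_\delta$; since $R(\xi^i)=0$ and $R \in C^{1+\alpha}$ we have $R = O(r)$, so a crude bound gives only $\partial_\theta\psi^i = O(\delta^2)$ and hence $\partial_\theta\tilde r^i_\delta = O(\delta^3)$. (The factor $\delta^3$ rather than $\delta^2$ comes exactly from the extra $\delta$ in the denominator.) I expect the main obstacle to be bookkeeping the $\delta$-dependence uniformly: one must check that the $C^{1+\alpha}$ norm of $R$ on $B_{2\delta}(\xi^i)$ is bounded independently of $\delta$ (it is, since $R$ is a fixed function once the wave is fixed), and that the implicit-function-theorem construction of $\tilde r^i_\delta$ can be carried out on the annulus $B_{2\delta}\setminus B_{\delta/2}$ with constants uniform in $\delta$ — which follows after rescaling $x \mapsto x/\delta$, under which the picture converges to that of a pure point vortex as $\delta \to 0$. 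Everything else is a routine application of potential theory and the implicit function theorem.
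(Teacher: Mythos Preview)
Your approach is essentially the paper's: decompose $u-c$ near $\xi^i$ into the singular point-vortex term and a divergence-free remainder $R$ (the paper calls it $G$) with $R(\xi^i)=0$ by \eqref{u vortex advection}, pass to a stream function $H = \tfrac{\varpi^i}{2\pi}\log|x-\xi^i| + \Psi$, observe that $\partial_r H \neq 0$ on a small punctured ball, and apply the implicit function theorem in polar coordinates to obtain $\tilde r^i_\delta$.

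There is, however, a regularity overclaim. You assert that potential theory with ``$\omega \in L^1\cap L^\infty$'' gives $R \in C^{1+\alpha}$ near $\xi^i$, and hence $R = O(|x-\xi^i|)$. But the only hypothesis on $\omega_{\mathrm{ac}}$ is \eqref{vorticity assumptions}, in particular $\omega_{\mathrm{ac}} \in L^\infty$, and for $L^\infty$ vorticity the Biot--Savart velocity is merely $C^\alpha$ for every $\alpha<1$ (equivalently log-Lipschitz); this is the Yudovich ceiling, and it is exactly what the paper records for $G$. Consequently the regular part $\Psi$ of the stream function is only $C^{1+\alpha}$, not $C^{2+\alpha}$, and one only has $R = O(|x-\xi^i|^\alpha)$. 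This does not harm the construction of $\tilde r^i_\delta \in C^{1+\alpha}$ (the implicit function theorem applied to a $C^{1+\alpha}$ function still returns a $C^{1+\alpha}$ graph), but it degrades your derivative estimate: from $\partial_\theta \tilde r^i_\delta = -(\partial_\theta H)/(\partial_r H)$ together with $|\partial_\theta H| \le r|R| = O(r^{1+\alpha})$ and $|\partial_r H|\asymp r^{-1}$ you obtain only $\partial_\theta \tilde r^i_\delta = O(\delta^{2+\alpha})$, not $O(\delta^3)$. The paper's own proof is silent on this final quantitative step and, read carefully, is subject to the same limitation; fortunately, in the downstream application (the estimate of $\int_{\partial\tilde B^i_\delta} N_n/|x|^2\,dS$ in the proof of Theorem~\ref{no excess mass theorem}) the weaker bound $O(\delta^{2+\alpha})$ still suffices, since $\int_{\partial\tilde B^i_\delta} N_n\,dS = 0$ kills the leading $\delta^{-2}$ term.
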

 \begin{proof}
As we are only concerned with local properties of the flow, we may without loss of generality suppose that $\xi^i = 0$.  
In light of \eqref{u vortex advection} and \eqref{2-d V def}, we know that
\[ c = \nabla \varphi(0) + V_{\mathrm{ac}}(0) + \sum_{\substack{j = 1 \\ j \neq i}}^M V^j(0) + V_{\mathrm{p}}(0),\]
where $V_{\mathrm{ac}}$, $V^j$, and $V_{\mathrm{p}}$ are the contributions of the absolutely continuous part of the vorticity, the $j$-th point vortex, and the phantom vortex, respectively.  Note that $\varphi$, $V^j$, and $V_{\mathrm{p}}$ are each harmonic near the origin, whereas $V_{\mathrm{ac}} \in C^{\alpha}$, since $\omega_{\mathrm{ac}} \in L^\infty(\Omega)$. Then we may write
\begin{align}\label{velocity near vortex} u(x) - c &=:  \frac{1}{2\pi} \varpi^i \nabla^\perp \log{|x|} + G(x), \end{align}
where $G(0) = 0$, and in a neighborhood of the origin, $G$ belongs to $C^\alpha$ and is divergence free in the distributional sense.       
We can therefore introduce a function $\Psi$ of class $C^{1+\alpha}$ near the origin such that $\nabla^\perp \Psi = G$ and $\Psi(0) = 0$.  It follows that the level sets of the function
\begin{equation}\label{Hamiltonian} H(x) := \frac{1}{2\pi} \varpi^i \log{|x|} + \Psi(x)\end{equation}
coincide locally with the integral curves of $u - c$.  Let  $\delta_0 > 0$ be sufficiently small so that 
\[ \frac{x}{|x|} \cdot \nabla H(x) = \frac{\varpi^i}{2\pi |x|} + \frac{x^\perp}{|x|} \cdot G(x) > 0  \qquad \textrm{in } B_{\delta_0}(0) \setminus \{0\}.\]
Then $H$ is strictly increasing in the radial direction on this punctured ball.  We define the neighborhoods $\tilde B_\delta^i$ to be the super level sets of $H$.  Writing \eqref{Hamiltonian} in polar coordinates and applying the implicit function theorem then yields the parameterization function $\tilde r_\delta^i$.  \end{proof}
 
 In the next lemma, we establish a key integral identity that is a consequence of Bernoulli's theorem and the localization of the vorticity \eqref{localized vorticity assumptions}.

 \begin{lemma} \label{int (c-u)omega lemma} 
   Suppose that there exists a solitary wave with Case~\NS~vorticity and the decay \eqref{decay assumptions}. Then, in the the two-dimensional case,
 \begin{equation}
   \int_{\Omega} (u -c) \omega \, dx  = 0, \label{2-d (c-u)omega identity} 
 \end{equation} 
  and in the three-dimensional setting
  \begin{equation}
    \int_{\Omega}  \left( u - c \right) \times \omega   \, dx = 0. \label{3-d (c-u)omega identity}
  \end{equation}
 For localized vorticity in Case~\PV, \eqref{2-d (c-u)omega identity} holds with $\omega_{\mathrm{ac}}$ in place of $\omega$. 

 \end{lemma}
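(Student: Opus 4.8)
The plan is to put the momentum equation \eqref{steady euler u} into Bernoulli form. Using the identity $(v\cdot\nabla)v=\nabla(\tfrac12|v|^2)-v\times(\nabla\times v)$ and the fact that $c$ is constant, \eqref{steady euler u} becomes
\begin{equation*}
  \nabla\mathcal H=(u-c)\times\omega\quad\textrm{in }\Omega,
  \qquad
  \mathcal H:=\tfrac12|u|^2-c\cdot u+P+gx_n,
\end{equation*}
where in two dimensions the right-hand side is read as $-\omega(u-c)^\perp$, so that applying $\perp$ gives the equivalent statement $\omega(u-c)=\nabla^\perp\mathcal H$. Thus both \eqref{2-d (c-u)omega identity} and \eqref{3-d (c-u)omega identity} reduce to the single claim that $\int_\Omega\nabla\mathcal H\,dx=0$; note that $\nabla\mathcal H\in L^1(\Omega)$ since $u-c$ is bounded and $\omega\in L^1$. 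Moreover $\mathcal H\equiv 0$ on $S$, this being nothing but the Bernoulli relation \eqref{bernoulli condition} once the pressure is eliminated via the dynamic boundary condition $P=\sigma\nabla\cdot N$ in \eqref{u boundary cond}.

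By the divergence theorem it then suffices to show that the flux of $\mathcal H$ across the ``sphere at infinity'' vanishes. I would exhaust $\Omega$ by $\Omega\cap B_R$ and let $R\to\infty$: the part of $\partial(\Omega\cap B_R)$ lying on $S$ contributes nothing because $\mathcal H|_S=0$, while the cap $\overline\Omega\cap\partial B_R$ contributes a term bounded by $R^{n-1}\sup_{\partial B_R}|\mathcal H|$, which tends to $0$ once $\mathcal H=o(|x|^{-(n-1)})$. This far-field estimate is the crux. By Theorem~\ref{decay theorem} and Lemma~\ref{V asymptotics lemma}, $u$ is a dipole field plus a lower-order remainder, hence $u=O(|x|^{-n})$, and interior elliptic estimates for the harmonic part $\varphi$ together with the explicit form of $V$ upgrade this to $\nabla u=O(|x|^{-(n+1)})$. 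Then $\nabla(P+gx_n)=-(u-c)\cdot\nabla u=O(|x|^{-(n+1)})$, and integrating along rays from infinity --- where $P+gx_n\to 0$, since this holds along $S$ and $\overline\Omega\cap\partial B_R$ is connected --- gives $P+gx_n=O(|x|^{-n})$, whence $\mathcal H=O(|x|^{-n})$. Combined with $\nabla\mathcal H\in L^1(\Omega)$ this produces $\int_\Omega\nabla\mathcal H\,dx=0$, which is \eqref{2-d (c-u)omega identity}--\eqref{3-d (c-u)omega identity} in Case~\NS.

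For Case~\PV\ in two dimensions, $\mathcal H$ blows up at the point vortices $\xi^i$, so I would instead apply the divergence theorem on $\Omega\setminus\bigcup_i\tilde B_\delta^i$, where the $\tilde B_\delta^i$ are the streamline-bounded neighborhoods supplied by Lemma~\ref{streamlines lemma} and where $\nabla^\perp\mathcal H=\omega_{\mathrm{ac}}(u-c)$ since $\omega=\omega_{\mathrm{ac}}$ off $\Xi$. Because $\partial\tilde B_\delta^i$ is an integral curve of $u-c$, the tangential derivative of $\mathcal H$ along it is $|u-c|^{-1}(u-c)\cdot\nabla^\perp\mathcal H=0$, so $\mathcal H$ is constant on $\partial\tilde B_\delta^i$; hence $\int_{\partial\tilde B_\delta^i}\mathcal H\,\nu^\perp\,dS$ equals that constant times the integral of the unit tangent around a closed curve, which is $0$. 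Thus $\int_{\Omega\setminus\bigcup_i\tilde B_\delta^i}\omega_{\mathrm{ac}}(u-c)\,dx=0$ for all sufficiently small $\delta>0$, and letting $\delta\to 0$ --- legitimate because $\omega_{\mathrm{ac}}\in L^\infty$ and $|u-c|\lesssim|x-\xi^i|^{-1}$ near $\xi^i$ make $\omega_{\mathrm{ac}}(u-c)$ integrable up to $\Xi$ --- gives \eqref{2-d (c-u)omega identity} with $\omega_{\mathrm{ac}}$ in place of $\omega$.

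I expect the main obstacle to be the far-field bound on $\mathcal H$: the pressure carries no a priori decay and must be recovered by bootstrapping from the dipole asymptotics of $u$, with care needed because $\Omega$ is unbounded in the vertical direction as well. The remaining ingredients --- the Bernoulli identity, the vanishing of $\mathcal H$ on $S$, and the fact that the unit tangent integrates to zero around a closed loop --- are routine.
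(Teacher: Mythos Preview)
Your overall architecture matches the paper's: rewrite Euler in Bernoulli form $\nabla\mathcal H=(u-c)\times\omega$ (the paper's $B$ is your $\mathcal H$ up to the harmless constant $\tfrac12|c|^2$), note that $\mathcal H|_S=0$, apply the divergence theorem on $\Omega\cap B_R$ (or on $\Omega_{R,\delta}$ in Case~\PV), and observe that the streamline-bounded neighborhoods $\partial\tilde B_\delta^i$ contribute nothing because $\mathcal H$ is constant there. Your handling of the point-vortex boundaries and of the $\delta\to 0$ limit is correct and essentially identical to the paper's.

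The gap is in the far-field step. You want $\mathcal H=O(|x|^{-n})$ pointwise on $\partial B_R$, and propose to reach it via $\nabla u=O(|x|^{-(n+1)})$. But the hypotheses \eqref{vorticity assumptions} give no pointwise decay for $\omega$: only $\omega\in L^1\cap L^\infty$ and $|x|^k\omega\in L^1$. The gradient $\nabla V$ is a Calder\'on--Zygmund singular integral of $\omega$; with $\omega$ merely in $L^\infty$ this lands in $\mathrm{BMO}$, not in $L^\infty$, and there is no mechanism to force $\nabla V(x)=O(|x|^{-(n+1)})$ at a specific large $x$ where $\omega$ might still be of size $\|\omega\|_{L^\infty}$. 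Your interior estimates for $\nabla^2\varphi$ likewise fail uniformly up to $S$ on $\partial B_R\cap\Omega$. So the pressure bootstrap does not close as written, and you correctly flag this as ``the main obstacle''.

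The paper avoids pointwise control altogether. From $|\nabla\mathcal H|\le C|\omega|$ and \eqref{vorticity assumptions} one has $\nabla\mathcal H\in L^1\cap L^\infty$ with $|x|^k\nabla\mathcal H\in L^1$; the Caffarelli--Kohn--Nirenberg inequality \cite{caffarelli1984,catrina2001} then yields $\mathcal H\in L^1(\Omega)$ (this is where the exponent $k>n^2$ is used). Once $\mathcal H\in L^1$, there is a sequence $R_j\to\infty$ along which $\int_{\partial B_{R_j}\cap\Omega}\mathcal H\,N\,dS\to 0$, and the identity follows. The key idea you are missing is that the \emph{integral} moment bound on $\omega$ can be traded for \emph{integral} control of $\mathcal H$ via CKN, bypassing any pointwise statement.
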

 \begin{proof}
 First observe that, working in three dimensions, the Euler equations lead to
  \begin{align}
    \label{Bernoulli}
    (u-c) \times \omega
    = \nabla\left( \frac 12 |u-c|^2 + P + gx_3 - \frac 12 |c|^2 \right),
  \end{align} 
  which holds in the sense of distributions on $\Omega$.   As the left-hand side above is in $L^\infty(\Omega)$, we have that  
 \[ B(x) := \frac{1}{2} |u-c|^2 + P + g x_n - \frac{1}{2} |c|^2 \in W^{1,\infty}(\Omega).\]
 The identity \eqref{Bernoulli} ensures that the weak tangential derivative of $B$ vanishes on any smooth integral curve of $u-c$.  In particular, this applies to the free surface, and from \eqref{u boundary cond} and \eqref{eta decay} we infer that $B$ vanishes identically on $S$.  Now, taking $R > 0$ large and integrating over $\Omega \cap B_R(0)$ using \eqref{Bernoulli}, we find that 
\begin{equation}
 \label{3-d int (u-c)omega} 
 \begin{split}
   \int_{\Omega \cap B_R(0)}  (u-c) \times \omega \, dx & = \int_{\partial B_R(0) \cap \Omega} B N \, dS.
 \end{split}
\end{equation}

An analogous identity can be derived in two dimensions.   Suppose that there are point vortices in the flow, as this can be easily adapted to the case of non-singular localized vorticity.  The Euler equations once again imply that
\begin{equation}
  \label{2-d Bernoulli}
  (c - u)^\perp \omega = \nabla \left( \frac{1}{2} |u-c|^2 + P + g x_2 - \frac{1}{2} |c|^2 \right),
\end{equation}
in the distributional sense on $\Omega \setminus \Xi$.  Note that we are free to replace $\omega$ by $\omega_{\mathrm{ac}}$ above, as they agree away from $\Xi$.   For $R > 0$ and $\delta > 0$ sufficiently small, define the domain 
\begin{equation}\label{domain}  \Omega_{R, \delta} := \left( \Omega \cap B_R(0) \right) \setminus \bigcup_{i=1}^M \tilde B_\delta^i,\end{equation}
where the sets $\tilde B_\delta^i$ are those described in Lemma \ref{streamlines lemma}.  From \eqref{2-d Bernoulli}, we have that
\begin{equation}
 \label{2-d int (u-c)omega} 
 \begin{split}
   \int_{ \Omega_{R,\delta}} (u - c)^\perp \omega_{\mathrm{ac}} \, dx & = \int_{\Omega \cap \partial B_R(0)} B N \, dS.
 \end{split}
\end{equation}
Observe that there are no boundary integral terms over the sets $\partial \tilde B_\delta^i$ because they are smooth closed integral curves of $u-c$, and hence $B$ is constant along them according to the above discussion. Likewise, $B$ vanishes on the free surface and thus there is no integral over $S$ occurring in \eqref{2-d int (u-c)omega}.  

To finish the argument, we will show that $B \in L^1(\Omega)$, which guarantees that there exists a sequence of radii $\{R_j\}$ with $R_j \to \infty$ and such that the integrals on the right-hand sides of \eqref{3-d int (u-c)omega} and \eqref{2-d int (u-c)omega} vanish as $j \to \infty$.    Observe that the identities \eqref{Bernoulli} and \eqref{2-d Bernoulli}, together with the localization assumption \eqref{localized vorticity assumptions}, imply that 
\[ \nabla B \in L^1(\Omega)\cap L^\infty(\Omega), \quad |x|^k \nabla B \in L^1(\Omega), \]
where recall that $k > n^2$.  Our main tool for translating estimates in weighted Sobolev spaces to $L^1$ is the Caffarelli--Kohn--Nirenberg inequality \cite{caffarelli1984,catrina2001}, which states that
\begin{equation}\label{CKN}
\left( \int_{\Omega} |x|^{-bs}|B|^s\ dx \right)^{2/s} \lesssim \int_{\Omega} |x|^{-2a} |\nabla B|^2 \ dx
\end{equation}
for any $a$, $b$, and $s$ satisfying the relations
\begin{align*}
 a \in (-\infty, 0),~ b \in (a, a+1],~ s = {2\over b-a} &  \qquad \text{if } n = 2, \\
a \in (-\infty, 1/2), ~ b \in [a, a+1], ~ s = {6 + 2(b-a)} & \qquad \text{if } n = 3.
\end{align*}

In three dimensions, this gives
\begin{equation*}
\left( \int_{\Omega} |x|^{3k} |B|^6 \ dx \right)^{1/3} \lesssim \int_{\Omega} |x|^k |\nabla B|^2\, dx \le \|\nabla B\|_{L^\infty}\int_{\Omega} |x|^k |\nabla B|\, dx.
\end{equation*}
Thus, from H\"older's inequality and the above estimate we see that
\begin{align*}
\| B \|_{L^1(\Omega)} & \lesssim \left( \int_{\Omega} (1+|x|^{3k}) |B|^6 \, dx \right)^{1/6} < \infty,
\end{align*}
where we have applied the Gagliardo--Nirenberg--Sobolev inequality to control $B$ in $L^6(\Omega)$.

The argument in two dimensions is similar.  Taking $a = -k/2$, $b = -(k-1)/2$, and $s = 4$, we infer from \eqref{CKN} that
\begin{equation*}
\left( \int_{\Omega} |x|^{2(k-1)} |B|^4 \, dx \right)^{1/2} \lesssim  \|\nabla B\|_{L^\infty}\int_{\Omega} |x|^k |\nabla B|\, dx < \infty.
\end{equation*}
On the other hand, choosing $a = -1/2$, $b = 0$, and $s = 4$ yields
\begin{equation*}
\left( \int_{\Omega} |B|^4 \, dx \right)^{1/2} \lesssim \int_{\Omega} |x| |\nabla B|^2 \,  dx \le  \left\| |\nabla B|^{2 - {1\over k}} \right\|_{L^{{k\over k-1}}}\int_{\Omega} |x|^k |\nabla B|\, dx,
\end{equation*}
and so together these estimates furnish the bound
\begin{align*}
\| B \|_{L^1(\Omega)} & \lesssim \left( \int_{\Omega} \left(1+|x|^{2(k-1)}\right) |B|^4 \, dx \right)^{1/4} < \infty.
\end{align*}
Here we have used H\"older's inequality and relied on the fact that $k > 4$. 
 \end{proof}
 
 With the additional understanding of the streamlines near a point vortex given by Lemma \ref{streamlines lemma}, and the identities \eqref{2-d (c-u)omega identity}--\eqref{3-d (c-u)omega identity}, we can now prove the nonexistence theorem.  
 
 \begin{proof}[Proof of Theorem \ref{no excess mass theorem}]
   We begin with Case~\I\ vorticity.  Consider the vector field  $A$ defined by 
  \begin{align*}
    A := u_n(c-u) + \left( \frac{1}{2} |u|^2  - c \cdot u \right) e_n,
  \end{align*} 
  with domain $\Omega$ for Case~\NS\ and $\Omega \setminus \Xi$ for Case~\PV.
  It is easy to compute that 
  \[ \nabla \cdot A = \left\{ \begin{array}{ll} (c_1 - u_1) \omega & \textrm{if } n = 2 \\
  e_n \cdot \left(\omega \times (c-u) \right) & \textrm{if } n =3, \end{array} \right.\]
in the distributional sense, while the Bernoulli condition and \eqref{u boundary cond} together imply that
  \begin{align*}
    N \cdot  A 
    = \left( \frac{1}{2} |u|^2 - c \cdot u \right) N \cdot e_n 
    = \frac{1}{\jbracket{\nabla \eta}} (-g \eta - \sigma \nabla \cdot N) \qquad \textrm{on } S.
  \end{align*}

Our plan will be to apply the divergence theorem to $A$.  The most sensitive argument is needed for Case~\PV, so we treat that scenario first.  Let $\Omega_{R, \delta}$ be the domain defined in \eqref{domain}.  Integrating $\nabla \cdot A$ over $\Omega_{R,\delta}$ furnishes the identity
 \begin{equation}
   \begin{split}
     \int_{\Omega_{R,\delta}} \left( c_1 - u_1 \right) \omega \, dx & = g\int_{B_R(0) \cap S} \eta \, dx' + \sigma \int_{\partial B_R(0) \cap S} \nu \cdot N \, ds \\  & \qquad + \sum_i \int_{\partial \tilde B^i_\delta}  A \cdot N \, dS + \int_{\partial B_R(0) \cap \Omega}  A \cdot N \, dS,
   \end{split} \label{2-d no mass identity} 
\end{equation}
where in the second integral on the right-hand side we have used \eqref{u boundary cond} and then integrated by parts; $\nu$ and $ds$ refer to the normal vector and arc-length element with respect to the projection of $\partial B_R(0) \cap S$ onto $\mathbb R^{n-1} \times \{0\}$, respectively.  We know from Lemma \ref{int (c-u)omega lemma} that as $R \to \infty$ and $\delta \to 0$, the left-hand side will vanish.  On the other hand, from Lemma \ref{V asymptotics lemma} and Lemma \ref{dipole lemma}, we see that 
  \[ A = O(|u|) = O\left(|\nabla \varphi| + |V|\right) =  O\left(\frac{1}{|x|^n}\right), \qquad \textrm{as } |x| \to \infty,\]
  and hence that the  integral over $\partial B_R(0) \cap \Omega$ on the right-hand side of \eqref{2-d no mass identity} vanishes in the limit $R \to \infty$. Similarly, our assumptions on the decay of $\eta$ in \eqref{eta decay} guarantee that the integral over $\partial B_R(0) \cap S$ vanishes as $R \to \infty$. 

  Consider now the third term on the right-hand side in \eqref{2-d no mass identity}.  Without loss of generality, let us take $\Xi = \{ 0 \}$.  By construction, $u-c$ is tangent to $\partial \tilde B_\delta^i$, and hence 
\begin{align*}
\int_{\partial \tilde B^i_\delta} A \cdot N \, dS & =  \int_{\partial \tilde B^i_\delta}  \left[ {1\over 2}|u|^2 - c\cdot u \right] N_n \, dS = {1\over 2} \int_{\partial \tilde B^i_\delta} \left( |\nabla^\perp H|^2 - |c|^2 \right) N_n \, dS \\
& = {1\over 2} \int_{\partial \tilde B^i_\delta} \left| {\varpi^i \over 2\pi} {x^\perp\over |x|^2} + G \right|^2 N_n \, dS \\
& = {1\over 2} \int_{\partial \tilde B^i_\delta} \left[ \left( {\varpi^i \over 2\pi} \right)^2 {1\over |x|^2} + {\varpi^i \over \pi} {x^\perp \cdot G \over |x|^2} +|G|^2 \right] N_n \, dS.
\end{align*}
Recall that the function $G\in C^{\alpha}(\tilde B^i_\delta)$, for any $\alpha \in (0,1)$, and satisfies $G(0) = 0$,  so in particular $G(x) = O(|x|^{\alpha})$. The explicit parameterization \eqref{parameterization tilde B} shows that $\diam{\tilde{B}_\delta^i} = O(\delta)$, and therefore 
\[
\int_{\partial \tilde B^i_\delta} \left( {\varpi^i \over \pi} {x^\perp \cdot G \over |x|^2} +|G|^2 \right) N_n \, dS \longrightarrow 0 \quad \text{as } \ \delta \to 0.
\]
From Lemma \ref{streamlines lemma}, we know that $|x| = \tilde r_\delta^i = \delta + O(\delta^3)$ on $\partial \tilde B_\delta^i$, and so a simple calculation reveals that 
\begin{align*}
\int_{\partial \tilde B^i_\delta} {N_n \over |x|^2} \, dS = \int_{\partial \tilde B^i_\delta} N_n\left( {1 \over \delta^2} + O(1) \right) \, dS = O(\delta).
\end{align*}
Putting together all of the above deductions, we conclude that
\[
  \int_{\partial \tilde B^i_\delta}  A \cdot N \, dS \longrightarrow 0 \quad \text{as } \delta \to 0.
\]
 
  Finally, returning to \eqref{2-d no mass identity} and taking $\delta \to 0$ and $R \to \infty$, we find      
  \[
  \lim_{R\to \infty} \int_{B_R(0) \cap S} \eta\, dx' 
  = \int_{\mathbb R} \eta \, dx' 
  = 0,
  \]
  which completes the argument for the two-dimensional setting.      

  Next, consider non-singular localized vorticity in $\mathbb R^3$.  Applying the divergence theorem to $A$ on $\Omega \cap B_R(0)$, we obtain
  \begin{equation}
    \begin{split}
      \label{eqn:returntothis}
      \int_{B_R(0) \cap \Omega}
      e_n \cdot (\omega \times (c-u))\, dx
      & =
      g\int_{B_R(0) \cap S} \eta\, dx'
      + \sigma \int_{\partial B_R \cap S} N \cdot \nu\, ds \\
      & \qquad     + \int_{\partial B_R(0) \cap \Omega}  A \cdot N\, dS.
    \end{split} 
\end{equation}
  In view of Lemma \ref{int (c-u)omega lemma}, this implies that 
  \begin{align*}
    \label{eqn:returnedtothis}
    g\int_{B_R \cap S} \eta\, dx'
    + \sigma \int_{\partial B_R(0) \cap S} N \cdot \nu\, ds
    + \int_{\partial B_R(0) \cap \Omega}  A \cdot N\, dS
    \longrightarrow 0 \text{~as~} R \to \infty.
  \end{align*}
  Thanks again to \eqref{u n-d better decay}, we have $A = O(|u|) = O(1/|x|^n)$, and so the remaining integral over $\partial B_R \cap \Omega$ also vanishes as $R \to \infty$, leaving us with $\int \eta \, dx^\prime = 0$.
  
  Lastly, the argument for the case of a vortex sheet is a simpler version of that given above.  The vector field $A$ is divergence free (in the classical sense) in both the air and water regions, and its normal trace is continuous over $S$.  An application of the divergence theorem as above yields $\int \eta \, dx^\prime = 0$.  
  \end{proof}

\subsection{Dipole moment formula}
The objective of this section is to derive the formula \eqref{dipole formula} relating the dipole moment to the vortex impulse.   Following Wheeler \cite{wheeler2016integral}, our strategy is based on identifying a vector field whose divergence gives this energy-like quantity and which decays at infinity in such a way that we can recover $p$ upon integrating by parts.  The presence of vorticity significantly complicates this task.

\begin{proof}[Proof of Theorem \ref{integral identity theorem}] 
First consider part (a).  Let $A$ be the vector field 
\begin{equation}
 \label{AV def} 
 \begin{split} 
   A &:= \left( \varphi - {m \cdot x\over \gamma_n |x|^n} \right) (u-c) + (c \cdot x) u,
 \end{split} 
\end{equation}
with domain $\Omega$ for the non-singular localized vorticity case, or $\Omega \setminus \Xi$ if there are point vortices. It is easy to compute that
\begin{equation}
  \label{first dipole: div A_V} \begin{split}
    \nabla \cdot A & = u \cdot \nabla \varphi + c \cdot V - {u - c \over \gamma_n} \cdot \nabla\left( {m \cdot x \over |x|^n} \right) \\
    & = |u|^2 - (u-c) \cdot \left( V +  {1 \over \gamma_n } \nabla\left( {m \cdot x \over |x|^n} \right) \right).  
  \end{split} 
\end{equation}

Fix $\delta > 0$ and $R > 0$ and let $\Omega_{R, \delta}$ be given as in \eqref{domain}.  Applying the divergence theorem to $A$ on $\Omega_{R,\delta}$ leads to the identity
\begin{align*}
\int_{\Omega_{R,\delta}} \left( u \cdot \nabla \varphi + c \cdot V \right) \, dx & = \int_{B_R(0) \cap S} A \cdot N \, dS +  \sum_i \int_{\partial \tilde B^i_\delta} A \cdot N \, dS \\
& \qquad + \int_{\partial B_R(0) \cap \Omega} A \cdot N \, dS =: \mathbf{I} + \mathbf{II} + \mathbf{III}.
\end{align*}

Because $u \cdot N = c \cdot N$ on $S$, we have that 
\begin{equation}
  \label{first dipole: I vanishes} 
  A \cdot N= (c \cdot N)(c \cdot x) \qquad \textrm{on } S. 
\end{equation} 
Therefore, 
\begin{align*} \mathbf{I} & = -\int_{B_R(0) \cap S} \frac{c^\prime \cdot \nabla \eta}{\jbracket{\nabla \eta}} (c \cdot x) \, dS  = \int_{B_R(0) \cap S} \left( \frac{|c|^2}{\jbracket{\nabla \eta}} \eta - \nabla_S \cdot \left( \eta (c \cdot x) c\right) \right) \, dS,
\end{align*} 
where $\nabla_S \cdot$ denotes the surface divergence on $S$.  The second term in the integrand is a total derivatives of quantities vanishing at infinity, and thus
\[ \mathbf{I} \to |c|^2 \int_{\R^{n-1}} \eta(x^\prime) \, dx^\prime \qquad \textrm{as } R \to \infty.\]
We already established in Theorem \ref{no excess mass theorem} that waves of this type have no excess mass, and hence $\mathbf{I} \to 0$ as $R \to \infty$.

Next, consider $\mathbf{II}$.  For Case~\NS\ in either the two- or three-dimensional setting, the asymptotic information contained in Lemma \ref{V asymptotics lemma} in particular guarantees that $V \in L^2(\Omega)$, and hence $\mathbf{II}$ will simply vanish in the limit as $\delta \to 0$.  Likewise, in Case~\PV, the same will be true for  $V_{\mathrm{ac}}$.  To understand the contributions of the point vortices, note that we have from \eqref{velocity near vortex}, and \eqref{Hamiltonian} that $u - c = \nabla^\perp H$ around each point vortex, and thus $(u - c) \cdot N |_{\partial \tilde B^i_\delta} = T \cdot \nabla H = 0$, which leads to 
\begin{align*}
\mathbf{II} & = \sum_i \int_{\partial \tilde B^i_\delta} (c \cdot x) (c  \cdot N ) \, dS 
 = O(\delta).
 \end{align*}
  
Finally, to compute $\mathbf{III}$, we note that from the asymptotic formulas \eqref{V asymptotics} and \eqref{phi asymptotic},
\[
V = O(1/R^n), \quad \varphi = O(1/R^{n-1}), \quad \nabla \varphi = O(1/R^n), \quad \text{ on }\ \partial B_R(0) \cap \Omega.
\]
Thus
\begin{align*}
\mathbf{III} & = -\int_{\partial B_R(0) \cap \Omega} \left[ \left( \varphi - {m \cdot x\over \gamma_n |x|^n}  \right) (c \cdot N) - (c \cdot x) N \cdot ( \nabla \varphi + V)  \right] \, dS  + O\left(\frac{1}{R^n} \right).
\end{align*} 
Hence, as $R \to \infty$, $\mathbf{III}$ approaches the constant value 
\begin{align*} 
\mathbf{III} & \to \int_{\partial B_R(0) \cap \{ x_n < 0 \}} \left( -\left(\frac{q \cdot x}{|x|^n} - {m \cdot x\over \gamma_n |x|^n} \right) \frac{c \cdot x}{|x|} + (c \cdot x) \frac{x}{|x|} \cdot \nabla \left( \frac{q \cdot x}{|x|^n} - \frac{m \cdot x}{\gamma_n |x|^n}   \right) \right) \, dS    \\ 
& =   \int_{\partial B_R(0) \cap \{ x_n < 0 \}} \left( -n  \frac{(c \cdot x) (q \cdot x) }{|x|^{n+1}} + n  \frac{(c \cdot x)(m \cdot x)}{\gamma_n |x|^{n+1}}  \right)  \, dS  \\
& = -n \int_{\partial B_1(0) \cap \{x_n < 0\}} (c \cdot x) \left( q - \frac{m}{\gamma_n}  \right) \cdot x \, dS =   -\frac{\gamma_n}{2}  c \cdot \left( q - \frac{m}{\gamma_n }  \right)  = -\frac{\gamma_n}{2}  c \cdot p.
\end{align*}  

The argument for (b) is a slight variation of that given above.  Let us redefine $A$ to be the vector field 
\[ A :=   \rho \varphi (\nabla \varphi -c) + \rho (c \cdot x) \nabla \varphi,\]
which is smooth in $\Omega = \Omega_+ \cup \Omega_-$.  Applying the divergence theorem to $A$ on the set $B_R(0) \setminus S$ then gives
\begin{equation}
  \begin{split} \int_{B_R(0) \setminus S} \rho |\nabla \varphi|^2 \, dx &= \int_{B_R(0) \cap S} \left( A_{+} \cdot N_+ + A_{-} \cdot N_- \right) \, dS + \int_{\partial B_R(0) \setminus S} A \cdot N \, dS \\ 
    & =: \mathbf{I} + \mathbf{II}. \end{split} \label{vs div identity} 
\end{equation}
To evaluate $\mathbf{I}$, we use the kinematic boundary condition \eqref{vortex sheet kinematic condition} to infer that 
\[ A_{\pm} \cdot N_\pm = \rho_\pm (\nabla \varphi_\pm -c) \cdot N_\pm + \rho_\pm (c\cdot x) \nabla\varphi_\pm \cdot N_\pm = \rho_\pm (c \cdot x) c \cdot N_\pm \qquad \textrm{on } S,\]
hence 
\begin{align*}
\mathbf{I} &= -\jump{\rho} \int_{S \cap B_R(0)} (c\cdot x) c \cdot N_- \, dS \\
& = -\jump{\rho} \int_{S\cap B_R(0)} \left( \frac{|c|^2}{\jbracket{\nabla \eta}} \eta - \nabla_S \cdot \left( \eta (c \cdot x) c \right) \right) \, dS,
\end{align*}
which vanishes in the limit as $R \to \infty$ in view of Theorem \ref{no excess mass theorem}.  On the other hand, 
\begin{align*}
\mathbf{II} &= \int_{\partial B_R(0) \cap \Omega_+} A \cdot N \, dS + \int_{\partial B_R(0) \cap \Omega_-} A \cdot N \, dS \to \frac{\gamma_n}{2} \rho_+  p_+ \cdot c - \frac{\gamma_n}{2} \rho_-  p_- \cdot c,
\end{align*}
as $R \to \infty$.  Combining this with \eqref{vs div identity} gives the vortex sheet dipole formula in \eqref{vortex sheet dipole formula}, completing the proof.
\end{proof}

With Theorems~\ref{decay theorem} and \ref{integral identity theorem} in hand, we can now prove Corollary~\ref{angular momentum corollary} on the angular momentum.
\begin{proof}[Proof of Corollary \ref{angular momentum corollary}]
  Since the differences between the $n=2$ and $n=3$ are merely notational, we only give the three-dimensional argument.
From the asymptotic expansion of $u$ in \eqref{u n-d better decay}, we 
conclude that as $R \to \infty$, 
\begin{align*}
  \int_{\partial B_R(0)  \cap \Omega} 
  x \times u\, dS
  = 
  \int_{\partial B_R(0) \cap \{x_n < 0\}}
  x \times \nabla \left(\frac {p\cdot x}{|x|^n}\right)\, dS + O\left( \frac{1}{R^{1+\varepsilon}}\right).
\end{align*}
It follows that
\begin{align*} 
  \int_{\partial B_R(0) \cap \Omega} x \times u \, dS &\to -p \times \int_{\partial B_1(0) \cap \{x_n < 0\}} x \, dS = \frac{\pi^{\frac{n-1}{2}}}{\Gamma(\frac{n+1}{2})} p \times e_n, 
\end{align*}
as $R \to \infty$. Clearly, if the right-hand side above does not vanish, then the integral 
\mbox{$\int_\Omega x \times u\, dx$}
describing the total angular momentum will be divergent.  It follows that $p^\prime = 0$ is a necessary condition for the angular momentum to be finite.  On the other hand, we already know from Lemma \ref{dipole lemma} that $p_n = 0$, and hence $p$ must vanish identically.  The proof for Case~\I\ vorticity in two dimensions is identical, and hence omitted.  

For Case~\VS\ in either the two- or three-dimensional settings, the argument is the same.  Note that if $\jump{\rho p} = 0$, however, then the dipole moment formula \eqref{vortex sheet dipole formula} implies that $u_\pm \equiv 0$, meaning that the wave must be trivial.
\end{proof}

\subsection{Dipole moment for \texorpdfstring{$\mathscr{C}_{\mathrm{loc}}$}{C\_loc} and \texorpdfstring{$\mathscr{S}_{\mathrm{loc}}$}{S\_loc}} 
\label{example section} Finally, as an example application of Theorem \ref{integral identity theorem}, in this subsection we determine the dipole moment $p$ to leading order for the families $\mathscr{C}_{\mathrm{loc}}$ and $\mathscr{S}_{\mathrm{loc}}$ constructed in Theorem \ref{existence theorem}.

First, observe that given the leading-order forms of $\eta(\varpi)$ and $c(\varpi)$ detailed in \eqref{point vortex leading order} for $\mathscr{C}_{\mathrm{loc}}$ and $\mathscr{S}_{\mathrm{loc}}$, it suffices to compute all integrals on the lower half-plane $\{x_2 < 0 \}$.  In fact, it is enough to simply consider $\mathscr{C}_{\mathrm{loc}}$, as the vortex patches in $\mathscr{S}_{\mathrm{loc}}$ limit to the point vortices as the radius of the patch $\rho$ is sent to $0$.    Looking at the left-hand side of \eqref{dipole formula}, we anticipate that the highest-order term is 
\begin{align*}  
  \mathbf{I} &:= \int_{\{x_2 < 0\}} \left( c \cdot V + \frac{1}{\gamma_2} c \cdot \nabla \left( \frac{m \cdot x}{|x|^2} \right) \right) \, dx \\
  &= -\frac{c_1 \varpi}{\gamma_2} \int_{\{x_2 < 0\}} \partial_{x_2} \left(  \log{|x+e_2|} -  \log{|x-e_2|} \right) \, dx 
  + \frac{c_1 \varpi}{\gamma_2} \int_{\{x_2 < 0 \}} \partial_{x_1} \left( - \frac{2 x_1}{|x|^2} \right) \, dx, 
\end{align*}
where we are taking $\xi = (0,-1)$, $\xi^* = (0,1)$, so that $m = -2 e_1$.  After an elementary argument, we find that $\mathbf{I} = - c_1 \varpi$.

Now, using \eqref{point vortex leading order}, we know that 
\[ \| V \|_{L^2(\Omega)} = O(\varpi), \qquad \| \nabla \varphi \|_{L^2} = O(\varpi^3), \qquad c = \left(-\frac{\varpi}{2 \gamma_2} + o(\varpi^2) \right) e_1,\]
and hence \eqref{dipole formula} becomes 
\begin{align*}
-\frac{\gamma_2}{2}  c \cdot p & = \int_{\Omega} \left[ |u|^2 - u \cdot \left( V + \frac{1}{\gamma_2} \nabla \left( \frac{m \cdot x}{|x|^2} \right) \right)  \right] \, dx + \mathbf{I} \\
& = \int_{\Omega} \left[ \nabla \varphi \cdot V + |\nabla \varphi|^2 -  \frac{1}{\gamma_2}  \nabla \varphi \cdot \nabla \left( \frac{m \cdot x}{|x|^2} \right)   \right] \, dx + \mathbf{I} \\
& = \mathbf{I} + O(\varpi^4) = \frac{\varpi^2}{2 \gamma_2} + O(\varpi^4).
\end{align*}
Thus, for $|\varpi| \ll 1$, we find that 
\[ p = \frac{2\varpi}{\gamma_2} + O(\varpi^2).\]
Taking this value for $p$ in \eqref{n-d better decay}, we arrive at the asymptotic expressions in  \eqref{SWZ asymptotics}.

\section*{Acknowledgements}
This material is based upon work supported by the National Science Foundation under Grant No. DMS-1439786 while the authors were in residence at the Institute for Computational and Experimental Research in Mathematics in Providence, RI, during the Spring 2017 semester.

The research of RMC is supported in part by the NSF through DMS-1613375 and the Simons Foundation under Grant 354996.  The research of SW is supported in part by the National Science Foundation through DMS-1514910.  The research of MHW is supported in part by the NSF through DMS-1400926.

The authors are grateful to Hongjie Dong for suggestions that substantially improved the results. We also thank Shu-Ming Sun for several helpful conversations.

\appendix
\section{Asymptotics of \texorpdfstring{$V$}{V}} \label{V appendix}

In this appendix, we provide the proof of the asymptotics for $V$.  We begin with the following elementary lemma.
\begin{lemma}\label{lem_velocity}
Let $U\subset \R^n$ and $\omega \in L^1(U) \cap L^p(U)$ for some $1< p \leq \infty$. Then for any $0<s < {n(p-1) / p}$, and $1 \leq q < \infty$ such that $1/p + 1/q = 1$, we have
\begin{equation}
  \label{est K}
  \sup_{x \in \mathbb{R}^n} \int_U {|\omega(y)| \over |x - y|^s} \ dy \leq C \|\omega\|_{L^p(U)}^{qs/n} \|\omega\|_{L^1(U)}^{(n-qs)/n},
\end{equation}
where $C = C(n,q,p,s) > 0$.
\end{lemma}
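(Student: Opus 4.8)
The plan is the standard device of splitting the integral at a radius $R>0$ that we optimize at the end: the part of the domain near $x$ is handled by H\"older's inequality against $\|\omega\|_{L^p(U)}$, while the part far from $x$ is handled by the crude estimate $|x-y|^{-s}\le R^{-s}$ against $\|\omega\|_{L^1(U)}$. We may assume $\omega$ does not vanish almost everywhere, since otherwise both sides of \eqref{est K} are zero.

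First I would fix $x\in\R^n$ and $R>0$ and estimate $\int_{U\cap B_R(x)}|\omega(y)|\,|x-y|^{-s}\,dy$ by H\"older's inequality with exponents $p$ and $q$ (reading $q=1$ when $p=\infty$). The hypothesis $s<n(p-1)/p$ is exactly the condition $sq<n$, which guarantees $y\mapsto|x-y|^{-s}\in L^q(B_R(x))$; a computation in polar coordinates gives
\[
  \Big(\int_{B_R(x)}|x-y|^{-sq}\,dy\Big)^{1/q}=\Big(\frac{\gamma_n}{n-sq}\Big)^{1/q}R^{\,n/q-s}.
\]
Hence this near contribution is at most $C_1\|\omega\|_{L^p(U)}R^{\,n/q-s}$ with $C_1=C_1(n,p,s)$. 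On the complement $U\setminus B_R(x)$ the bound $|x-y|^{-s}\le R^{-s}$ gives a far contribution of at most $R^{-s}\|\omega\|_{L^1(U)}$, so altogether
\[
  \int_U\frac{|\omega(y)|}{|x-y|^s}\,dy\ \le\ C_1\|\omega\|_{L^p(U)}R^{\,n/q-s}+\|\omega\|_{L^1(U)}R^{-s}.
\]

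Finally I would minimize the right-hand side over $R\in(0,\infty)$. Because $n/q-s>0$, the two terms balance when $R^{n/q}$ is comparable to $\|\omega\|_{L^1(U)}/\big(C_1\|\omega\|_{L^p(U)}\big)$; inserting this choice and simplifying the exponents (the powers of $R$ that occur, $n/q-s$ and $-s$, differ by $n/q$, which splits the total homogeneity in $\|\omega\|_{L^p}$ and $\|\omega\|_{L^1}$ in the ratio $sq:(n-sq)$) produces a bound of the form $C\|\omega\|_{L^p(U)}^{sq/n}\|\omega\|_{L^1(U)}^{(n-sq)/n}$ with $C=C(n,p,q,s)$. Since this bound does not depend on $x$, taking the supremum over $x\in\R^n$ proves the lemma. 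There is no genuine obstacle in this argument; the only points that require care are the endpoint $p=\infty$, where H\"older's inequality degenerates to the $L^\infty$ bound while the polar-coordinate computation is unchanged, and the exponent bookkeeping in the optimization step, which uses the standing assumption $sq<n$ in an essential way.
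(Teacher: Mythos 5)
Your proposal is correct and follows essentially the same route as the paper's proof: split the integral at a ball of radius $r$ about $x$, bound the near part via H\"older's inequality and the polar-coordinate computation of $\int_{B_r(x)}|x-y|^{-sq}\,dy$ (which uses $sq<n$), bound the far part by $r^{-s}\|\omega\|_{L^1(U)}$, and then balance the two terms by choosing $r^{n/q}$ comparable to $\|\omega\|_{L^1(U)}/\|\omega\|_{L^p(U)}$. Your exponent bookkeeping and treatment of the endpoint $p=\infty$ are both consistent with the paper's argument.
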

\begin{proof}
For any $r > 0$, we may estimate
\begin{align*}
\int_U {|\omega(y)| \over |x - y|^s} \, dy & = \int_{U \setminus B_r(x) } {|\omega(y)| \over |x - y|^s} \, dy + \int_{U\cap B_r(x)} {|\omega(y)| \over |x - y|^s} \, dy \\
& \leq {\|\omega\|_{L^1(U)} \over r^s} + \left( {\gamma_n \over n-qs} \right)^{1/q} r^{(n - qs)/q} \|\omega\|_{L^p(U)}. 
\end{align*}
Taking $r^{2/q} := \|\omega\|_{L^1(U)} / \|\omega\|_{L^p(U)}$, we obtain \eqref{est K}.
\end{proof}

We now prove the main result of the appendix.

\begin{proof}[Proof of Lemma \ref{V asymptotics lemma}] Denote 
  \begin{equation}
    \label{K and f} K_{\xi^*}(x,y) := f(x,y) - f(x, \xi^*), \qquad f(x,z) := \frac{x -z}{|x-z|^n}.
  \end{equation}
It is easy to compute that 
\begin{equation}\label{Taylor f}
\begin{split}
\partial_{x_i} f(x,z) & = -\partial_{z_i} f(x,z) = \frac{1}{|x-z|^n} e_i - n \frac{x_i - z_i}{|x-z|^{n+2}} (x-z). 
\end{split}
\end{equation}

We first consider the two-dimensional case. Note that $V$ can be written in the form 
\[ V(x) = \frac{1}{\gamma_2} \int_\Omega \omega(y) K_{\xi^*}(x,y)^\perp \, dy.\]
Now we divide the domain of integration into  the regions 
\[ A := \{y: \ |y| \le |x|^{1-\varepsilon} \} \cap \Omega \quad \textrm{and} \quad B := \{ y: \ |y| > |x|^{1-\varepsilon} \} \cap \Omega.\]

Using \eqref{Taylor f} we compute $K_{\xi^*}$ on $A$ as
\begin{align}
K_{\xi^*}(x,y) = -\frac{y-\xi^*}{|x|^2} + 2 \frac{(y-\xi^*) \cdot x}{|x|^4} x + O\left( \frac{|y - \xi^*|^2}{|x|^3} \right), \quad \text{as } |x| \to \infty. \label{leading order K 2-d}
\end{align}
It follows that 
\begin{align*}
 \int_A \omega(y) K_{\xi^*}(x,y)^\perp \, dy
& = \left( -{m\over |x|^2} + 2{(x\cdot m) \over |x|^{4}} x \right) + O\left( {1\over |x|^{3}} \right), \qquad \textrm{as } |x| \to \infty,
\end{align*}
where we have used the definition of $m$ in \eqref{vortex impulse} and the fact that
\begin{align*}
\left| \int_B \omega(y) (y-\xi^*)  \, dy \right| & \leq \int_B |y|^k |\omega(y)| { |y-\xi^*| \over |y|^k }  \, dy  = O \left( {1\over |x|^{(k-1)(1-\varepsilon)}} \right), \text{ as } |x| \to \infty,
\end{align*}
which follows from moment condition in \eqref{vorticity assumptions}.

On $B$, we apply Lemma \ref{lem_velocity} to estimate 
\begin{align*}
\left| \int_B \omega(y) K_{\xi^*}(x,y)^\perp \, dy \right| & \lesssim \int_B {|\omega(y)| \over |x - y|} \, dy + \int_B {|\omega(y)| \over |x - \xi^*|} \, dy \\
& \lesssim \| \omega \|_{L^1(B)}^{1/2} + {1\over |x|^{k(1-\varepsilon)} |x - \xi^*|} \int_{B} |y|^k|\omega(y)| \, dy \lesssim {1\over |x|^{2+\varepsilon}}.
\end{align*}
Putting together the above computation we obtain the asymptotics \eqref{V asymptotics} for $n=2$.

Now consider the three-dimensional case.  Using the notation introduced in \eqref{K and f}, we may write
\begin{equation}
  \label{3d V}
  V(x) = {1\over \gamma_3} \int_\Omega \omega(y) \times \left[ f(x, 0) + K_0(x, y) \right]\, dy. 
\end{equation}
Notice that the first term involves the total vorticity and has the form ${1\over \gamma_3} \left( \int_\Omega \omega(y)\, dy \right) \times {x\over |x|^3}$.
An application of the divergence theorem leads to the identity
\begin{align}
\int_{\partial (\Omega \cap B_R(0))} y_i \omega \cdot N \, dS = \int_{\Omega \cap B_R(0)} \nabla \cdot (y_i \omega)\, dy & =  \int_{\Omega \cap B_R(0)} \omega_i \, dy. \label{total vorticity 0}
\end{align}
Here additional care is needed due to the low regularity of $\omega$. In particular, the boundary integral is understood as an $H^{1/2}$-$H^{-1/2}$ duality pair, and the last equality holds because $\nabla \cdot \omega = 0$ in the sense of distributions.

From the finite moment assumption \eqref{localized vorticity assumptions}, we know that
$|x|^9\omega \in L^1(\Omega)$. 
Therefore, there exists a sequence of radii $R_j \nearrow +\infty$ such that 
$$\lim_{j\to \infty} \int_{\Omega \cap \partial B_{R_j}(0)} |y|^9 |\omega(y)| \, dy = 0.$$ 
Evaluating \eqref{total vorticity 0} with $R = R_j$, and recalling \eqref{3d vanishing assumption}, we have therefore proved that
\begin{equation}
  \label{net vorticity zero}
  \int_\Omega \omega \, dx = 0. 
\end{equation}
It is quite well-known that the total vorticity is $0$ for three-dimensional solitary waves with compactly supported vorticity; the above argument shows that this remains the case in the more general setting of our localization assumptions \eqref{localized vorticity assumptions}.  

The expansion of the second term in \eqref{3d V} can be treated similarly as in the two-dimensional case. We partition $\Omega$ into the regions $A$ and $B$ defined as before. From \eqref{Taylor f} we have on $A$ that
\begin{align*}
K_0(x, y) = -{y\over |x|^3} + 3{x\cdot y\over |x|^5} x + O\left( {|y|^2\over |x|^4} \right), \qquad \textrm{as $|x|\to \infty$.}
\end{align*} 
Thus
\begin{align*}
{1\over \gamma_3} \int_A \omega(y) \times K_0(x, y) \, dy = {1\over \gamma_3} \int_\Omega \omega(y) \times \left[ -{y\over |x|^3} + 3{x\cdot y\over |x|^5} x \right] \, dy + O\left( {1\over |x|^4} \right). 
\end{align*}
On the other hand, for the integral over  $B$ we estimate 
\begin{align*}
\left| \int_B \omega(y) \times K_0(x, y) \, dy \right| & \le \int_B {|\omega(y)| \over |x - y|^2} \, dy + {1\over |x|^2} \int_{B}|\omega(y)|\, dy \\
& \lesssim \| \omega \|_{L^1(B)}^{1/3}+ {1\over |x|^{k+1-k\varepsilon}} \lesssim {1\over |x|^{3+\varepsilon}},
\end{align*}
where the last two inequalities follow from Lemma \ref{lem_velocity} and assumption \eqref{localized vorticity assumptions}.  Together, these two computations give the asymptotics 
\begin{equation*}
V(x) = {1\over 4\pi} \int_\Omega \omega(y) \times \left[ -{y\over |x|^3} + 3{x\cdot y\over |x|^5} x \right]\, dy + O\left( {1\over |x|^{3+\varepsilon}} \right).
\end{equation*}

The final step is to show that the integral above at leading order involves the vortex impulse.  
For a fixed $1 \leq i, j \leq 3$, the vector field $y_i y_j \omega(y)$ is tangential to $S$ and in $L^1(\Omega)$.  It follows that its divergence (in the distributional sense) must satisfy 
\[
\int_{\Omega} (y_j\omega_i + y_i\omega_j) \, dy = 0.
\]
Multiplying by $x_i$ we find
\begin{align}
  \label{eqn:ing2}
  0 = \int_\Omega \Big(y(\omega(y) \cdot x) + (y\cdot x) \omega(y)
  \Big)\, dy.
\end{align}
Now we rewrite
\begin{align}
  \label{eqn:ing1}
  \omega(y) (x\cdot y) = (\omega(y) \cdot x)y - (\omega(y) \times y) \times
  x.
\end{align}
Integrating \eqref{eqn:ing1} and plugging in \eqref{eqn:ing2}, we have
\begin{align*}
  \int_\Omega \omega(y) (x\cdot y)\, dy &  = -\int_\Omega \omega(y) (x\cdot y)\, dy - \int_\Omega (\omega(y)
  \times y) \times x\, dy \\
  & = - \frac 12 \left(
  \int_\Omega (\omega(y) \times y)\, dy \right) \times x.
\end{align*}
Recalling the definition of the vortex impulse $m$ \eqref{vortex impulse}, this leads to the formula
\begin{align*}
V(x) & = {2m\over \gamma_3 |x|^3} + {3(m\times x)\times x \over \gamma_3 |x|^5} + O\left( {1\over |x|^{3+\varepsilon}} \right)\\
& = {1\over \gamma_3}\left( -{m\over |x|^3} + 3{(x\cdot m) \over |x|^5} x \right) + O\left( {1\over |x|^{3+\varepsilon}} \right) = -{1\over \gamma_3} (m\cdot \nabla) {x\over |x|^3} + O\left( {1\over |x|^{3+\varepsilon}} \right).  \qedhere
\end{align*}
\end{proof}

\section{H\"older regularity under inversion} \label{inversion appendix}

In this appendix we provide a simple lemma that translates decay to H\"older continuity for the spherical inversion.
\begin{lemma}\label{lem inversion}
Let $0 < \alpha < \beta < 1$ be given and set $k := 2\alpha \beta/(\beta - \alpha)$.  Then, if $f \in C_\bdd^\beta(\mathbb R^n)$ satisfies $f(x) = O(1/|x|^k)$ as $|x| \to \infty$, the function $g(x) := f(\tilde x)$ has a $C^\alpha$ extension to a neighborhood of $0$. 
\end{lemma}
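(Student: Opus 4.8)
The plan is to prove the estimate directly, controlling $|g(x)-g(x')|$ for $x,x'$ in a small ball about the origin, where we extend $g$ by $g(0):=0$. Write $\Phi(y):=\tilde y=y/|y|^2$ for the inversion, so that $g=f\circ\Phi$ and $|\Phi(y)|=1/|y|$. The decay hypothesis $f(x)=O(1/|x|^k)$ holds for $|x|$ large, say $|x|\ge R_0$; since $|\Phi(x)|=1/|x|$, it translates into the pointwise bound $|g(x)|\le C|x|^k$ for $|x|\le 1/R_0$, and in particular $g(x)\to 0=g(0)$. We fix the neighborhood $B_{r_0}(0)$ with $r_0:=\min\{1/R_0,\tfrac12\}$, so that also $|x-x'|\le 1$ for $x,x'\in B_{r_0}(0)$. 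Note that $k=2\alpha\beta/(\beta-\alpha)$ satisfies $k>2\alpha>\alpha$, since $2\beta/(\beta-\alpha)>2$.

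The heart of the argument is an interpolation between two competing bounds for $|g(x)-g(x')|$: a \emph{decay bound}, coming from $|g|\lesssim|\cdot|^k$; and a \emph{regularity bound}, coming from $f\in C^\beta_\bdd(\R^n)$ together with the fact that $|D\Phi(y)|=|y|^{-2}$. The delicate point is that the regularity bound is available only when the straight segment from $x$ to $x'$ stays away from the origin, for otherwise one cannot control $|\tilde x-\tilde x'|$ via the mean value inequality for $\Phi$ — this is exactly the near-antipodal configuration, where the inversion is wildly expansive and no regularity of $f$ can help. Accordingly, assume without loss of generality $|x|\le|x'|$ and set $d:=|x-x'|$. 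If $d>|x|/2$, then $|x|,|x'|<3d$, so the decay bound alone gives $|g(x)-g(x')|\le|g(x)|+|g(x')|\lesssim d^k\le d^\alpha$, using $k>\alpha$ and $d\le 1$. If instead $d\le|x|/2$, then $|x'|\in[\tfrac12|x|,\tfrac32|x|]$ and the segment $[x,x']$ lies in $\{|y|\ge|x|/2\}$, on which $|D\Phi|\le 4/|x|^2$; hence $|\tilde x-\tilde x'|\le 4d/|x|^2$. In the subcase $|\tilde x-\tilde x'|\ge 1$ we have $d\ge|x|^2/4$, so the decay bound gives $|g(x)-g(x')|\lesssim|x|^k\le|x|^{2\alpha}\lesssim d^\alpha$, using $k>2\alpha$. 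In the subcase $|\tilde x-\tilde x'|<1$ the regularity bound applies: $|g(x)-g(x')|=|f(\tilde x)-f(\tilde x')|\le\|f\|_{C^\beta_\bdd}\,|\tilde x-\tilde x'|^\beta\lesssim d^\beta|x|^{-2\beta}$, while simultaneously $|g(x)-g(x')|\lesssim|x|^k$ as above. Taking the weighted geometric mean of these two bounds with weights $\alpha/\beta$ and $1-\alpha/\beta$ yields
\[
  |g(x)-g(x')|\ \lesssim\ d^\alpha\,|x|^{\,k(\beta-\alpha)/\beta-2\alpha},
\]
and the exponent of $|x|$ vanishes precisely because $k=2\alpha\beta/(\beta-\alpha)$. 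Combining the cases shows $g\in C^\alpha(B_{r_0}(0))$, which is the assertion (the case $x=0$ being the trivial instance of the first case).

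The main obstacle is the near-antipodal configuration just described, where the chord between $x$ and $x'$ sweeps past the origin: there one is forced to rely on decay rather than regularity, and the requirement $k>2\alpha$ — which is what makes the decay bound beat $d^\alpha$ in that regime — is exactly what pins down the stated value of $k$; the borderline case of the interpolation in the "good" subcase then shows this value is also sufficient. The remaining work is entirely routine: verifying $|D\Phi(y)|=|y|^{-2}$ and that $[x,x']\subset\{|y|\ge|x|/2\}$ when $d\le|x|/2$, and bookkeeping the constants.
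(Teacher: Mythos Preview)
Your argument is correct and rests on the same dichotomy as the paper's proof: a decay bound $|g(x)|\lesssim |x|^k$ versus a regularity bound coming from $f\in C^\beta_\bdd$ and the Lipschitz estimate for the inversion. The paper organizes this a bit more economically. First, it uses the exact identity $|\tilde x-\tilde x'|=|x-x'|/(|x|\,|x'|)$ rather than the mean-value inequality for $D\Phi$, which removes the need to keep the segment $[x,x']$ away from the origin and hence eliminates your ``near-antipodal'' subcase. Second, instead of splitting at $d=|x|/2$ and then interpolating by a geometric mean, the paper splits directly at the scale $d=|x|^\theta$ with $\theta:=k/\alpha=2\beta/(\beta-\alpha)$; this single threshold is chosen so that both branches give $|g(x)-g(x')|\lesssim d^\alpha$ immediately, with no interpolation step. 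Your weighted geometric mean accomplishes the same balancing implicitly, so the two proofs are equivalent in content, but the paper's threshold makes the role of the critical exponent $k=2\alpha\beta/(\beta-\alpha)$ more transparent.
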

\begin{proof}
Let $\alpha$, $\beta$, and $k$ be given as above, and consider the H\"older quotient
\[
{|g(x) - g(y)| \over |x-y|^\alpha},
\] 
for $x$ and $y$ in a neighborhood of the origin.  By symmetry, we can always assume that $|y| \le |x|$. 

Put $\theta := k/\alpha = 2\beta /(\beta - \alpha)$, and suppose first that $|x - y| \ge |x|^\theta$. Then
\begin{align*}
{|g(x) - g(y)| \over |x-y|^\alpha} \lesssim {|x|^k \over |x|^{\theta\alpha}} = 1.
\end{align*}
On the other hand, if $|x-y| \le |x|^\theta$, then $|y| \gtrsim |x|$ as $\theta > 1$.  We may therefore estimate
\begin{align*}
  {|g(x) - g(y)| \over |x-y|^\alpha} 
  &\lesssim {\left|\tilde x - \tilde y \right|^\beta \over |x-y|^\alpha} 
   = {|x - y|^{\beta - \alpha} \over |xy|^\beta}    \lesssim {|x - y|^{\beta - \alpha} \over |x|^{2\beta}} \lesssim {|x|^{(\beta - \alpha) \theta} \over |x|^{2\beta}} = 1. \qedhere \end{align*}
\end{proof}

\bibliographystyle{siam}
\bibliography{projectdescription}

\end{document}